\begin{document}
\newtheorem{theorem}{Theorem}[section]
\newtheorem{lemma}[theorem]{Lemma}
\newtheorem{definition}[theorem]{Definition}
\newtheorem{conjecture}[theorem]{Conjecture}
\newtheorem{proposition}[theorem]{Proposition}
\newtheorem{algorithm}[theorem]{Algorithm}
\newtheorem{corollary}[theorem]{Corollary}
\newtheorem{observation}[theorem]{Observation}
\newtheorem{problem}[theorem]{Open Problem}
\newtheorem{remark}[theorem]{Remark}
\newcommand{\noin}{\noindent}
\newcommand{\ind}{\indent}
\newcommand{\om}{\omega}
\newcommand{\I}{\mathcal I}
\newcommand{\N}{{\mathbb N}}
\newcommand{\LL}{\mathbb{L}}
\newcommand{\R}{{\mathbb R}}
\newcommand{\E}{\mathbb E}
\newcommand{\Prob}{\mathbb{P}}
\newcommand{\pr}{\mathbb{P}}
\newcommand{\eps}{\varepsilon}
\newcommand{\G}{{\mathcal{G}}}
\newcommand{\Bin}{\mathrm{Bin}}

\newcommand{\cD}{{\mathcal D}}
\newcommand{\cE}{{\mathcal E}}
\newcommand{\cF}{{\mathcal F}}
\newcommand{\cG}{{\mathcal G}}
\newcommand{\cH}{{\mathcal H}}
\newcommand{\cR}{{\mathcal R}}

\newcommand{\RG} {\ensuremath{\mathcal G(n,r)}}
\newcommand{\RGuv} {\ensuremath{\widetilde{\mathcal G}(n,r)}}
\newcommand{\SR} {\ensuremath{\mathcal S_n}}
\newcommand{\SRn} {\ensuremath{\mathcal S_n}}
\newcommand{\MCF}{{\rm maximal-clique-free}}
\newcommand{\mcf}{{\rm mcf}}
\newcommand{\whp}{{\rm whp}}

\title{Clique colouring of binomial random graphs}

\author{Colin McDiarmid}
\address{Department of Statistics, Oxford University, 24 - 29 St Giles, Oxford OX1 3LB, UK}
\email{\tt cmcd@stats.ox.ac.uk}

\author{Dieter Mitsche}
\address{Universit\'{e} de Nice Sophia-Antipolis, Laboratoire J-A Dieudonn\'{e}, Parc Valrose, 06108 Nice cedex 02}
\email{\texttt{dmitsche@unice.fr}}

\author{Pawe\l{} Pra\l{}at}
\address{Department of Mathematics, Ryerson University, Toronto, ON, Canada
\and
The Fields Institute for Research in Mathematical Sciences, Toronto, ON, Canada}
\email{\tt pralat@ryerson.ca}

\keywords{random graphs, clique chromatic number}
\thanks{The research of the third author is supported in part by NSERC and Ryerson University.}
\subjclass{05C80, 
05C15, 
05C35. 
}
\date{\today}
\maketitle

\begin{abstract}
A {clique colouring} of a graph is a colouring of the vertices so that no maximal clique is monochromatic (ignoring isolated vertices). The smallest number of colours in such a colouring is the {clique chromatic number}.
In this paper, we study the asymptotic behaviour of the clique chromatic number of the random graph $\G(n,p)$ for a wide range of edge-probabilities $p=p(n)$. We see that the typical clique chromatic number, as a function of the average degree, forms an intriguing step function. 
\end{abstract}

\section{Introduction and main results}\label{sec:intro}

A \emph{proper colouring} of a graph is a labeling of its vertices with colours such that no two vertices sharing the same edge have the same colour.  The smallest number of colours in a proper colouring of a graph $G=(V,E)$ is called its \emph{chromatic number}, and it is denoted by $\chi(G)$. 

In this paper we are concerned with another notion of vertex colouring. A \emph{clique} $S \subseteq V$ is a subset of the vertex set such that any pair of vertices in $S$ is connected by an edge. Moreover, a clique $S$ is \emph{maximal} if there is no vertex in $V \setminus S$ connected by an edge  to every vertex in $S$ (in other words, $S$ is not a proper subset of another clique). A \emph{clique colouring} of a graph $G$ is a colouring of the vertices so that no maximal clique is monochromatic, ignoring isolated vertices. The smallest number of colours in such a colouring is called the \emph{clique chromatic number} of $G$, denoted by $\chi_c(G)$. Clearly, $\chi_c(G) \le \chi(G)$ but it is possible that $\chi_c(G)$ is much smaller than $\chi(G)$. For example, for any $n \geq 2$ we have $\chi(K_n) = n$ but $\chi_c(K_n) = 2$.  Note that if $G$ is triangle-free then $\chi_c(G)=\chi(G)$.

\smallskip

The problem has received considerable attention for deterministic graphs: in~\cite{Mohar} it was shown that planar graphs satisfy $\chi_c(G) \leq 3$. In~\cite{Andreae} a necessary and sufficient condition for $\chi_c(G) \le k$ on line graphs was given. Moreover, several graph classes are known to satisfy $\chi_c(G) \leq 2$:  claw-free perfect graphs~\cite{BGGPS2004}, co-diamond free graphs~\cite{Defossez}, claw-free planar graphs~\cite{SLK2014}, powers of cycles (other than odd cycles longer than three that need three colours)~\cite{Campos}, and also claw-free graphs with maximum degree at most $7$ (except for odd cycles longer than $3$)~\cite{Liang}. Also, circular-arc graphs are known to have $\chi_c(G) \le 3$ (see~\cite{Cerioli}). Further results about other classes of graphs can also be found in~\cite{Klein}, and the clique chromatic number of graphs without having long paths was studied in~\cite{Gravier}. On the algorithmic side, it is known that testing whether $\chi_c(G)=2$ for a planar graph can be performed in polynomial time~\cite{Kratochvil}, but deciding whether $\chi_c(G)=2$ is $NP$-complete for $3$-chromatic perfect graphs~\cite{Kratochvil} and for graphs with maximum degree $3$~\cite{BGGPS2004}.  The clique chromatic number for geometric graphs (in particular, random geometric graphs) is analysed in the accompanying paper~\cite{OurpaperGnr}.

\smallskip

Let us recall the classic model of random graphs that we study in this paper. The \emph{binomial random graph} $\G(n,p)$ is the random graph $G$ with vertex set $[n]$ in which every pair $\{i,j\} \in \binom{[n]}{2}$ appears independently as an edge in $G$ with probability~$p$. Note that $p=p(n)$ may (and usually does) tend to zero as $n$ tends to infinity.  The behaviour of many colouring problems has been investigated for $\G(n,p)$: the classic chromatic number has been intensively studied,  see~\cite{JLR, KM15} and the references therein; the list chromatic number (known also as the choice number) was studied among others in~\cite{KSVW02, Vu1}, and other variants were analysed recently in~\cite{DMP, DMNPPT} and~\cite{FMPP}.
\smallskip

All asymptotics throughout are as $n \rightarrow \infty $ (we emphasize that the notations $o(\cdot)$ and $O(\cdot)$ refer to functions of $n$, not necessarily positive, whose growth is bounded). We use the notations $f \ll g$ for $f=o(g)$ and $f \gg g$ for $g=o(f)$. 
{We also write $f(n) \sim g(n)$ if $f(n)/g(n) \to 1$ as $n \to \infty$ (that is, when $f(n) = (1+o(1)) g(n)$). }
We say that events $A_n$ in a probability space hold \emph{with high probability} (or \emph{\whp}), if the probability that $A_n$ holds tends to $1$ as $n$ goes to infinity. Since we aim for results that hold \whp, we will always assume that $n$ is large enough. We often write $\G(n,p)$ when we mean a graph drawn from the distribution $\G(n,p)$.  
Finally, we use $\log n$ to denote natural logarithms.
\smallskip

Here is our main result.  We consider the edge probability $p(n)$ ranging from the sparse case when $pn \to \infty$ arbitrarily slowly, to the dense case when $p = 1-\eps$ for an arbitrarily small $\eps>0$, and we break this range into 8 parts.  After the theorem we give a corollary which is less precise but easier to read.
\begin{theorem}\label{thm:Gnp}
Let $\eps > 0$ be a constant (arbitrarily small). Let $\omega = \omega(n)$ be a function tending to infinity with $n$ (arbitrarily slowly), and suppose that $\omega = o(\sqrt{\log n})$.  Let $G \in \G(n,p)$ for some $p=p(n)$. Then, the following holds \whp:
\begin{itemize}
\item [(a)] If $pn \ge \omega$ and  $pn < n^{1/2-\omega/\sqrt{\log n}}$, then $\chi_c(G) \sim \chi(G) \sim \frac {pn}{2 \log(pn)}.$
\item [(b)] If $n^{1/2-\omega/\sqrt{\log n}} \le pn < \sqrt{2 n \log n}$, then $\chi_c(G) = \Omega \left( \frac {p^{3/2} n}{(\log n)^{1/2}} \right)$ and $\chi_c(G) \le \chi(G) \sim \frac {pn}{2 \log(pn)}.$
\item [(c)] If $\sqrt{2 n \log n} \le pn < n^{3/5 - (6/\log n)^{1/2}}$, then $\chi_c(G) = \Theta \left( \frac {p^{3/2} n}{(\log n)^{1/2}} \right).$ 
\item [(d)] If $n^{3/5 -  (6/\log n)^{1/2}} \le pn  < n^{3/5} (\log n)^{3/5}$, then $\chi_c(G) = n^{2/5+o(1)}.$ 
\item [(e)] If $n^{3/5} (\log n)^{3/5} \le pn < n^{2/3-\eps}$, then $\chi_c(G) = \Omega \left( 1/ p \right)$ and $\chi_c(G) = O(\log n / p).$
\item [(f)] If $pn = n^{2/3+o(1)}$ and $pn < n^{2/3} (\log n)^{4/9}$, then $\chi_c(G) = n^{1/3+o(1)}.$
\item [(g)] If $n^{2/3}(\log n)^{4/9} \le pn < n^{1-\eps}$, then $\chi_c(G) = \Omega \left( 1/ p \right)$ and $\chi_c(G) = O(\log n / p).$
\item [(h)] If $p = n^{-o(1)}$ and $p \le 1-\eps$, then $\chi_c(G) \le (1/2+o(1)) \log_{1/(1-p)} n.$
\end{itemize}
\end{theorem}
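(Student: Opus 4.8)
The plan is to exhibit a clique colouring with $d+1$ colours, where $d=(1/2+o(1))\log_{1/(1-p)}n$, built around a small set $D=\{v_1,\dots,v_d\}$ that meets every maximal clique of $G$. Colour each vertex $u\notin N[D]$ with colour $0$, and colour every other vertex $u$ with colour $\mathrm{col}(u):=\min\{i:u\in N(v_i)\}$. The point is that for $1\le i\le d$ the colour class $W_i$ satisfies $W_i\subseteq N(v_i)\setminus\{v_i\}$, so every clique contained in $W_i$ is extended by $v_i$ and hence $W_i$ contains no maximal clique of $G$; and $W_0=V\setminus N[D]$ contains no maximal clique precisely because $D$ meets every maximal clique. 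Thus no maximal clique is monochromatic, and $\chi_c(G)\le d+1$.

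It remains to find, \whp, a suitable $D$ of size $d:=\lceil(\tfrac12+\tfrac{\log\log n}{\log n})\log_{1/(1-p)}n\rceil$. I would draw $D$ uniformly at random among $d$-subsets of $[n]$ and ask for two properties: (i) no maximal clique of $G$ is contained in $V\setminus N[D]$; and (ii) every vertex of $D$ has a neighbour in $D$ (needed only so that $\mathrm{col}$ is defined on $D$ itself). Property (ii) fails on an expected $d(1-p)^{d-1}=O(\log n)\cdot n^{-1/2+o(1)}=o(1)$ vertices. For (i), the expected number of maximal cliques $Q$ of $G$ with $Q\cap N[D]=\emptyset$ is exactly
\[
\sum_{q\ge 2}\binom{n-d}{q}\,p^{\binom q2}\,(1-p)^{dq}\,(1-p^q)^{n-d-q},
\]
since such a $Q$ is disjoint from $D$, sends no edge to $D$ (the factor $(1-p)^{dq}$, which in particular prevents extension of $Q$ into $D$), and admits no extension outside $D\cup Q$. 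Taking both expectations over the random pair $(G,D)$ and applying Markov's inequality then shows that \whp\ over $G$ there exists a set $D$ with (i) and (ii), and the colouring above finishes the proof.

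The only real work is bounding the displayed sum. One uses the well-known shape of the maximal-clique size distribution of $\G(n,p)$ in this range: writing $L:=\log_{1/p}n$ (which tends to infinity since $p=n^{-o(1)}$), the quantity $\binom{n}{q}p^{\binom q2}(1-p^q)^{n-q}$ is at most $n^{(1/2+o(1))L}$ for every $q$, and it is super-polynomially small unless $q$ lies within $O(\log\log n/\log(1/p))$ of $L$ --- below that window the factor $(1-p^q)^{n-q}\approx e^{-np^q}$ is negligible because $np^q$ is then a large power of $1/p$, and above $2L$ already $\binom nq p^{\binom q2}<1$. On this window $(1-p)^{dq}=n^{-(1/2+o(1))q}=n^{-(1/2+o(1))L}$, which is just small enough to beat the $n^{(1/2+o(1))L}$ count; the extra $\tfrac{\log\log n}{\log n}$ in the choice of $d$ supplies exactly the slack needed to dominate the contribution of the slightly smaller maximal cliques and push the sum to $o(1)$. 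This is also where the constant $1/2$ comes from: a maximal clique has size $\approx L$, so a random set of size $d$ misses it with probability only $(1-p)^{dL}$, and $d\approx\tfrac12\log_{1/(1-p)}n$ already makes $(1-p)^{dL}$ small compared with the number $n^{(1/2+o(1))L}$ of maximal cliques, whereas dominating every \emph{vertex} would need $d\sim\log_{1/(1-p)}n$.

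Both hypotheses are used essentially: $p=n^{-o(1)}$ makes $L\to\infty$ and rules out small maximal cliques, so the small-$q$ terms of the sum are harmless, while $p\le 1-\eps$ makes $\log_{1/(1-p)}n=\Theta(\log n)\to\infty$, so the ``$+1$'' and all the $o(1)$ corrections really are of lower order. The main obstacle, then, is purely the estimation of the above sum --- specifically, controlling the maximal-clique size distribution precisely enough near $q=L$ to see that the factor $(1-p)^{dq}$ coming from partial domination wins by the right margin.
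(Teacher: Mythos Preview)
Your colouring scheme is correct, and it is essentially the one underlying Lemma~\ref{lem:deterministic_upperbound}: once $V\setminus N[D]$ contains no maximal clique of $G$, giving colour $i$ to the first-time neighbours of $v_i$ and one extra colour to the leftover set yields a valid clique colouring with $d+1$ colours. Property~(ii) is handled correctly.

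The gap is in the first-moment estimate for property~(i). At $q=L:=\log_{1/p}n$ (where $p^L=1/n$) your summand is
\[
\binom{n-d}{L}p^{\binom{L}{2}}(1-p)^{dL}(1-p^L)^{n-d-L}\ \le\ \frac{n^{(L+1)/2}}{L!}\cdot n^{-(\tfrac12+\gamma)L}\cdot e^{-1+o(1)}\ =\ \frac{n^{1/2-\gamma L}}{e^{1+o(1)}\,L!},
\]
where $\gamma=\tfrac{\log\log n}{\log n}$, so $\gamma L=\tfrac{\log\log n}{\log(1/p)}$. When $\log(1/p)\gg\log\log n$ --- take for instance $p=e^{-\sqrt{\log n}}$, which is still $n^{-o(1)}$ --- you get $\gamma L\to 0$ and $L=\sqrt{\log n}$, and the right-hand side is of order $n^{1/2}/L!=\exp\!\big(\tfrac12\log n-\tfrac12\sqrt{\log n}\,\log\log n+O(\sqrt{\log n})\big)\to\infty$. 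So the displayed sum is not $o(1)$ across the full range of part~(h); the slack $\tfrac{\log\log n}{\log n}$ you built into $d$ is tuned to the constant-$p$ regime, where $L=\Theta(\log n)$, and is too small once $L\ll \log n/\log\log n$. (The approach can be repaired by taking $\gamma$ at least of order $1/L=\log(1/p)/\log n$, still $o(1)$ here, but then the window analysis near $q=L$ has to be redone with this $\gamma$.)

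The paper sidesteps this super-polynomial count of maximal cliques. It builds $A$ of the same size $k\sim\tfrac12\log_{1/(1-p)}n$ greedily (exposing only edges touching $A$), then argues structurally about the leftover set $V\setminus N[A]$: \whp\ it has at most $2n^{1/2-\gamma}$ vertices; hence (Claim~1, applied to this small random graph) its largest clique has size below $\log_{1/p}n-3\log_{1/p}\log n$; and separately (Claim~2, a first moment over \emph{sets}, not over maximal cliques) \whp\ no clique of that size is maximal in $G$. One extra colour then suffices for the leftover. This two-step route --- bound the leftover, then rule out small maximal cliques globally --- never has to confront the $n^{(1/2+o(1))L}$ many maximal cliques directly, which is exactly where your first-moment argument runs into trouble.
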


\noindent Note that 
$$
\log_{1/(1-p)} n =
\begin{cases}
\Theta(\log n) & \text{ if } p = \Omega(1) \text{ and } p \le 1\!-\!\eps \text{ for some $\eps\!>\!0$,} \\
(1+o(1)) (\log n) / p & \text{ if } p = o(1).
\end{cases}
$$
In places a slightly tighter result than the one given in Theorem~\ref{thm:Gnp} can be obtained. 
We are interested in the ratio between the upper and lower bounds on $\chi_c(G)$ for each $p$, but in order to keep the statement reasonably simple, we spelled out results only for large intervals of $p$. For instance, as cases (d) and (f) are concerned with $pn = n^{3/5+o(1)}$ and $pn = n^{2/3+o(1)}$ respectively, we treated these cases in the statement of Theorem~\ref{thm:Gnp} less fully. For the reader interested in more precise bounds on this ratio, we refer to the proofs in the sections below for more details. 

\smallskip
In order to understand better the behaviour of the clique chromatic number, let us introduce the following function $f : (0,1) \to \R$:
$$
f(x) = 
\begin{cases}
x & 0 < x < 1/2\\
1+3(x-1)/2 & 1/2 \le x < 3/5\\
1-x & 3/5 \le x < 1.
\end{cases}
$$
This function is depicted in Figure~\ref{fig1}. We get immediately the following corollary.

\begin{figure}[h]
\begin{center}
\includegraphics[width=2.0in,height=1.6in]{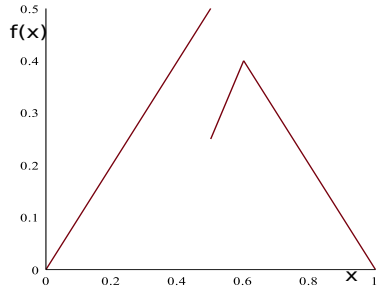}
\end{center}
\caption{The function $f(x)$ related to the clique chromatic number of $G(n,p)$}\label{fig1}
\end{figure}

\begin{corollary} \label{cor.main}
Let $f : (0,1) \to \R$ be defined as above. Let $G \in \G(n,p)$ for some $p=p(n)$. If $pn = n^{x+o(1)}$ for some $x \in (0,1/2) \cup (1/2, 1),$
then \whp\  $\chi_c(G) = n^{f(x)+o(1)}$.
\end{corollary}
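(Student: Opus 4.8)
The plan is to obtain Corollary~\ref{cor.main} as a direct specialization of Theorem~\ref{thm:Gnp}, so there is little to do beyond careful bookkeeping. Fix $x \in (0,1/2)\cup(1/2,1)$ and a sequence $p=p(n)$ with $pn=n^{x+o(1)}$; write $pn=n^{E(n)}$ where $E(n)=x+o(1)$, so $E(n)\to x$. First I would fix an admissible auxiliary function $\omega$ for Theorem~\ref{thm:Gnp}, for instance $\omega(n)=\log\log n$ (which tends to infinity and is $o(\sqrt{\log n})$), and a constant $\eps=\eps(x)>0$ chosen small enough that $\eps<2/3-x$ when $x\in[3/5,2/3)$ and $\eps<1-x$ when $x\in[2/3,1)$ (for other $x$ the value of $\eps$ is irrelevant). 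The key observation is that each hypothesis (a)--(g) of Theorem~\ref{thm:Gnp} is a pair of threshold conditions on $E(n)$ whose cut-off exponents all converge to members of $\{1/2,\,3/5,\,2/3,\,1\}$: the perturbing terms $\omega/\sqrt{\log n}$, $(6/\log n)^{1/2}$, $(4/9)\log\log n/\log n$, etc., are all $o(1)$. Hence, since $E(n)\to x$ and $x$ either lies strictly inside one of the open intervals $(0,1/2)$, $(1/2,3/5)$, $(3/5,2/3)$, $(2/3,1)$ or equals one of the \emph{matching} boundary values $3/5$, $2/3$, for all sufficiently large $n$ the pair $(n,p(n))$ satisfies the hypothesis of one of the cases (a)--(g), and I would simply read off the conclusion.

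Carrying this out: for $x\in(0,1/2)$ case (a) applies and gives $\chi_c(G)\sim \frac{pn}{2\log(pn)}=n^{x+o(1)}$; for $x\in(1/2,3/5)$ case (c) applies and gives $\chi_c(G)=\Theta\!\big(p^{3/2}n/(\log n)^{1/2}\big)$, and since $p^{3/2}n=(pn)^{3/2}n^{-1/2}=n^{(3x-1)/2+o(1)}$ this equals $n^{(3x-1)/2+o(1)}$; for $x\in(3/5,2/3)$ case (e) applies and gives $\Omega(1/p)\le\chi_c(G)\le O(\log n/p)$, hence $\chi_c(G)=n^{1-x+o(1)}$ because $1/p=n/(pn)=n^{1-x+o(1)}$; and for $x\in(2/3,1)$ case (g) applies and gives the same bounds $\Omega(1/p)\le\chi_c(G)\le O(\log n/p)=n^{1-x+o(1)}$. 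Throughout I use that every factor of the form $\log n$, $\log(pn)$, $(\log n)^c$, and every implied constant in a $\Theta(\cdot)$, $\Omega(\cdot)$ or $O(\cdot)$, is $n^{o(1)}$, so it perturbs the exponent only by $o(1)$. Comparing with the definition of $f$, the exponents obtained are exactly $f(x)$: namely $x$ on $(0,1/2)$, $(3x-1)/2=1+3(x-1)/2$ on $(1/2,3/5)$, and $1-x$ on $(3/5,1)$. Thus \whp\ $\chi_c(G)=n^{f(x)+o(1)}$ in all four ranges.

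It remains to treat the two values $x=3/5$ and $x=2/3$, which sit on the common boundary of two (or three) cases, so that $(n,p(n))$ may fall in different cases for different $n$; the point is that all possibilities give the same answer. If $x=3/5$, then for each large $n$ the pair lies in case (c), (d), or (e): case (c) gives $n^{(3\cdot(3/5)-1)/2+o(1)}=n^{2/5+o(1)}$, case (d) gives $n^{2/5+o(1)}$ directly, and case (e) gives $n^{1-3/5+o(1)}=n^{2/5+o(1)}$, all equal to $n^{f(3/5)+o(1)}$ since $f(3/5)=2/5$. If $x=2/3$ the relevant cases are (f) and (g): (f) gives $n^{1/3+o(1)}$ and (g) gives $n^{1-2/3+o(1)}=n^{1/3+o(1)}$, both equal to $n^{f(2/3)+o(1)}$. (The value $x=1/2$ is genuinely excluded: there case (b) only sandwiches $\chi_c(G)$ between $n^{1/4+o(1)}$ and $n^{1/2+o(1)}$, a polynomial gap, and $f$ is discontinuous there.) I expect no substantive obstacle in this argument --- Theorem~\ref{thm:Gnp} does all the work --- and the only thing to check with care is exactly this bookkeeping: that at the coarse $n^{\bullet+o(1)}$ resolution the eight intervals of the theorem assemble into the three pieces of $f$, with the overlaps at $3/5$ and $2/3$ mutually consistent.
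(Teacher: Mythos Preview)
Your proposal is correct and takes essentially the same approach as the paper, which gives no proof at all beyond the sentence ``We get immediately the following corollary.'' You have simply spelled out the implicit bookkeeping --- matching each range of $x$ to the relevant case of Theorem~\ref{thm:Gnp} and checking that all logarithmic and constant factors are absorbed into the $n^{o(1)}$ --- and done so accurately, including the consistency checks at the overlap points $x=3/5$ and $x=2/3$.
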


Figure~\ref{fig1} shows that there are dramatic transitions in Corollary~\ref{cor.main} when $p$ is about $n^{-1/2}$ and $n^{-2/5}$, and there is some problem at about $n^{-1/3}$ in Theorem~\ref{thm:Gnp}.  What is happening at these points?  Here is the rough story.

For $p \ll n^{-1/2}$,
\whp\ most edges are not in triangles, and $\chi_c(G)$ is close to $\chi(G)$.
For $p \gg n^{-1/2}$, \whp\ each edge is in many triangles, and we may use triangle-free sets as colour classes; or we may find relatively small dominating sets which yield an upper bound on $\chi_c(G)$ (see Lemma~\ref{lem:deterministic_upperbound}). For $p \ll n^{-2/5}$, the first bound (which increases with $p$) is stronger, whereas from then on the second bound (which decreases with $p$) gives better results.
For lower bounds on $\chi_c(G)$, depending on the range of $p$, we find a value of $k$, so that \whp\ there are many $k$-cliques and most of them are not contained inside $(k+1)$-cliques.
(In the central range in Figure~\ref{fig1}, for $n^{-1/2} \ll p \ll n^{-2/5}$, we use $k=3$.)
For $p \ll n^{-1/3}$ \whp\ most triangles are not contained in $4$-cliques; and for $p \gg n^{-1/3}$ \whp\ each triangle is in many $4$-cliques.



\section{Preliminaries}

In this section, let us recall a few inequalities that are well known and can be found, for example, in~\cite{JLR}. We will use the following version of \emph{Chernoff's bound}. Suppose that $X \in \Bin(n,p)$ is a binomial random variable with expectation $\mu=pn$. If $0<\delta<1$, then 
\begin{equation}\label{eq:chern1}
\Prob [X < (1-\delta)\mu] \le \exp \left( -\frac{\delta^2 \mu}{2} \right),
\end{equation}
and if $\delta > 0$,
\begin{equation}\label{eq:chern2}
\Prob [ X > (1+\delta)\mu] \le \exp\left(-\frac{\delta^2 \mu}{2+\delta}\right).
\end{equation}
\bigskip

We will use the following version of \emph{Janson's inequality}, adapted to our setting.

\begin{lemma}\label{thm:Janson}
Let $G =(V,E) \in \G(n,p)$, $S \subseteq V$, and $k \in \mathbb{N} \setminus \{1,2\}$. For $A \subseteq S$ with $|A|=k$, let $\mathcal{I}_A$ be the indicator variable which is $1$ if the vertices in $A$ form a clique of size $k$, and $0$ otherwise. Let $X=\sum_{A \subseteq S, |A|=k} \mathcal{I}_A$ be the random variable counting copies of $K_k$ in $S$, $\mu=\E [X]$, and $\overline{\Delta}= \sum_{A,B \subseteq S, |A|=|B|=k, |A \cap B| \geq 2} \E [\mathcal{I}_A \mathcal{I}_B ].$ Then,  for $0 \leq t \leq \E X$,
$$
\Prob \left( X \leq \mu - t \right) \le \exp\left(- \frac{\varphi(-t/\mu)\mu^2}{\overline{\Delta}}\right),
$$
where $\varphi(x)=(1+x)\log(1+x)-x$.
\end{lemma}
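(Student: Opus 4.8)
\emph{Proof proposal.}
This is the lower-tail form of Janson's inequality applied to the indicators $\mathcal I_A$ counting copies of $K_k$ inside $S$, so the plan is to run the standard exponential-moment proof of that inequality and then verify that the $\overline\Delta$ of the statement is exactly the parameter it calls for. The proof begins with Markov's inequality for $e^{-\lambda X}$: for every $\lambda\ge 0$,
\[
\Prob(X\le\mu-t)\ \le\ e^{\lambda(\mu-t)}\,\E\!\left[e^{-\lambda X}\right],
\]
so everything comes down to a good upper bound on $\E[e^{-\lambda X}]$.

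The heart of the matter is the estimate
\[
\E\!\left[e^{-\lambda X}\right]\ \le\ \exp\!\left(\frac{\mu^{2}}{\overline\Delta}\Bigl(e^{-\lambda\overline\Delta/\mu}-1\Bigr)\right)\qquad(\lambda\ge 0),
\]
which says that, for lower-tail purposes, $X$ behaves like a Poisson variable of effective mean $\mu^{2}/\overline\Delta$. I would prove it along the classical lines: set $g(\lambda)=\log\E[e^{-\lambda X}]$, so that $g(0)=0$ and
\[
g'(\lambda)\ =\ -\,\frac{\E[Xe^{-\lambda X}]}{\E[e^{-\lambda X}]}\ =\ -\,\frac{1}{\E[e^{-\lambda X}]}\sum_{A}\E\!\left[\mathcal I_A\,e^{-\lambda X}\right],
\]
and then establish the pointwise bound $g'(\lambda)\ge-\mu\,e^{-\lambda\overline\Delta/\mu}$, which on integration over $[0,\lambda]$ yields the displayed estimate. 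This pointwise bound is the technical core, and it follows the usual route (compare the proof of the general inequality in~\cite{JLR}): for each $A$ one writes $\mathcal I_A e^{-\lambda X}=e^{-\lambda}\mathcal I_A e^{-\lambda(X-\mathcal I_A)}$, separates $X-\mathcal I_A$ into the $K_k$-copies sharing no edge with $A$ --- which are jointly independent of $\mathcal I_A$ --- and the remaining ``nearby'' copies, and then applies the Harris/FKG correlation inequality; the nearby copies are exactly what turns the Poisson exponent $e^{-\lambda}$ into $e^{-\lambda\overline\Delta/\mu}$. Granting the estimate, one feeds it into the Markov bound and optimises over $\lambda$: the optimum is $\lambda=\frac{\mu}{\overline\Delta}\log\frac{1}{1-t/\mu}$, and a short computation using $\varphi(x)=(1+x)\log(1+x)-x$ collapses the exponent to $-\varphi(-t/\mu)\,\mu^{2}/\overline\Delta$, which is the asserted bound (the boundary case $t=\mu$, i.e.\ the estimate for $\Prob(X=0)$, being recovered by letting $\lambda\to\infty$).

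It remains to check that $\overline\Delta$ as defined in the statement is indeed the Janson parameter. For $k\ge 3$ --- the reason $k\in\N\setminus\{1,2\}$ is assumed --- two $k$-sets $A,B\subseteq S$ have \emph{dependent} indicators $\mathcal I_A,\mathcal I_B$ precisely when the complete graphs on $A$ and on $B$ share an edge, i.e.\ precisely when $|A\cap B|\ge 2$; when $|A\cap B|\le 1$ the two indicators are functions of disjoint sets of potential edges and hence independent. Consequently $\sum_{|A\cap B|\ge 2}\E[\mathcal I_A\mathcal I_B]$ splits into the diagonal contribution $\sum_A\E[\mathcal I_A]=\mu$ and the sum of $\E[\mathcal I_A\mathcal I_B]$ over dependent pairs $A\ne B$ --- exactly the quantity controlling the general lower-tail Janson bound --- and it is strictly positive. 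I expect the main obstacle to be the key estimate on $\E[e^{-\lambda X}]$, namely carrying out the dependency-structure and correlation-inequality argument for $g'(\lambda)\ge-\mu e^{-\lambda\overline\Delta/\mu}$ in full rigour; the Markov step, the optimisation over $\lambda$, and the identification of $\overline\Delta$ are all routine. Alternatively --- and this is presumably the route the authors have in mind, given that they describe the inequality as well known --- one simply quotes the general lower-tail Janson inequality from~\cite{JLR} and performs only this last identification.
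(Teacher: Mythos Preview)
Your proposal is correct, and indeed more than the paper itself provides: the paper does not prove this lemma at all, but simply states it as a well-known version of Janson's inequality (implicitly referring to~\cite{JLR}) adapted to the clique-counting setting. Your closing remark---that one may just quote the general lower-tail Janson inequality and then identify $\overline\Delta$ via the observation that $\mathcal I_A$ and $\mathcal I_B$ are independent iff $|A\cap B|\le 1$---is exactly the paper's route, and the paper even makes that independence observation explicitly in the parenthetical remark following the lemma.
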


\noindent (Let us note that indicator random variables $\mathcal{I}_A$ and $\mathcal{I}_B$ are independent if $|A \cap B| \le 1$; see the definition of $\overline{\Delta}$ above.)
\bigskip

We will use the following result of Vu to obtain bounds on the upper tail of certain subgraphs (see~\cite{Vu}).
Denote by $e(H)$ ($v(H)$, respectively) the number of edges (vertices, respectively) of a graph $H$. We say that a graph $H$ is \emph{balanced} if for every subgraph $H' \subseteq H$ with $v(H') \geq 1$, we have $e(H')/v(H') \leq e(H)/v(H)$.
\begin{lemma}\label{lem:vu}
Let $H$ be a fixed balanced graph on $k$ vertices, and let $K>0$ be a constant.  Then there are constants $c=c(H,K)>0$ and $s=s(H,K)$ such that the following holds. Let $G \in \mathcal{G}(n,p)$, let $Y$ denote the number of appearances of $H$ in the graph $G$, and let $\mu= \E [Y]$.
If $0 < \varepsilon \leq K$ and $\eps^2 \mu^{1/(k-1)} \geq s \log n$, then
$$
\mathbb{P}(Y \geq (1+\varepsilon)\mu) \leq \exp(-c\varepsilon^2 \mu^{1/(k-1)}).
$$
\end{lemma}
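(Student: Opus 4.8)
The plan is to regard $Y$ as a polynomial of degree $d:=e(H)$ with non-negative coefficients in the independent edge-indicators $(t_{uv})_{uv\in\binom{[n]}{2}}$, $t_{uv}\sim\Bin(1,p)$, namely $Y=\tfrac{1}{|\mathrm{Aut}(H)|}\sum_{\phi}\prod_{xy\in E(H)}t_{\phi(x)\phi(y)}$ with the sum over injections $\phi\colon V(H)\to[n]$, and then to prove the upper-tail bound by a divide-and-conquer edge-exposure martingale, arguing by induction on $k=v(H)$. (It suffices to treat connected $H$; for a disconnected balanced $H$ the estimate follows by applying the connected case to each component, which is again balanced, and multiplying.) For a subgraph $F\subseteq H$ and a fixed embedding of $V(F)$ into $[n]$, write $\mu_F:=n^{\,v(H)-v(F)}p^{\,e(H)-e(F)}$; up to a constant depending only on $H$ this is the expected number of copies of $H$ extending that embedding, so that $\mu=\mu_{\emptyset}$. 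The hypothesis that $H$ is balanced --- $e(F)/v(F)\le e(H)/v(H)$ for every $F\subseteq H$ --- is exactly what controls how the profile $(\mu_F)_F$ decays as $F$ grows; a short optimisation over which $F$ are extremal then shows that the recursion below is governed by $\mu^{1/(k-1)}$ and not by a smaller power of $\mu$.

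Next I would set up the martingale. Exposing the $\binom n2$ potential edges one at a time, the Doob martingale $Y_i=\E[Y\mid\mathcal F_i]$ has increments $Y_i-Y_{i-1}=(t_{e_i}-p)D_i$, where $D_i=\E[\,\#\{\text{copies of }H\text{ through }e_i\}\mid\mathcal F_{i-1}]$, so $|Y_i-Y_{i-1}|\le D_i$. The number of copies of $H$ through a fixed edge is essentially a count of copies of $H$ with one edge deleted and its two endpoints anchored --- a balanced configuration on $k-2$ free vertices --- and has mean $\Theta(\mathcal E_1)$ with $\mathcal E_1:=\max_e\mu_e$; the base case $k=3$ is a binomial common-neighbour count controlled by Chernoff's bound~\eqref{eq:chern2}, and for $k\ge4$ it concentrates by the inductive hypothesis. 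A union bound over the $\binom n2$ edges --- and, more generally, over the $n^{O(1)}$ placements of the finitely many sub-configurations of $H$ whose over-representation could inflate some $D_i$ --- produces a ``good'' event $\mathcal G$ on which every increment is $O(\mathcal E_1\,\mathrm{polylog}\,n)$. On $\mathcal G$, Azuma--Hoeffding (or, more sharply, Freedman's inequality, using that the conditional variances also sum to $O(\mu\,\mathcal E_1\,\mathrm{polylog}\,n)$) makes $\{Y\ge(1+\eps)\mu\}$ very unlikely, while $\Prob(\mathcal G^{c})$ is bounded using the inductive statement on graphs with fewer vertices; combining the two and absorbing logarithmic factors via the hypothesis $\eps^{2}\mu^{1/(k-1)}\ge s\log n$ yields the claimed $\exp(-c\eps^{2}\mu^{1/(k-1)})$.

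I expect the crux to be bounding the increments $D_i$ \emph{uniformly over the exposure history}: the final count of copies through a given edge concentrates, but a partial conditional expectation can be far larger if the edges revealed so far happen to span an unusually dense subgraph. This is precisely what forces the divide-and-conquer structure --- one peels off, level by level, the rare events that some partial sub-configuration is over-represented, each controlled by the inductive hypothesis on a graph with strictly fewer vertices, and moreover at a \emph{large} deviation level (a growing factor above the mean, occurring rarely) rather than at level $1+\eps$, the two scales being balanced against the Azuma estimate. Carrying this balance correctly through the $k-1$ levels of the recursion, verifying via the subgraph optimisation (where balancedness enters) that no level produces an exponent below $\mu^{1/(k-1)}$, and absorbing the accumulated $n^{O(1)}$ union bounds, is the delicate part; it is also what yields the exponent $\mu^{1/(k-1)}$ together with the requirement $\eps^{2}\mu^{1/(k-1)}\ge s\log n$ rather than the sharp upper-tail rate.
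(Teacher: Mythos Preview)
The paper does not prove this lemma at all: it is quoted as a result of Vu~\cite{Vu} and used as a black box, so there is no ``paper's own proof'' to compare against. Your sketch is therefore not competing with anything in the present paper.

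That said, your outline is broadly in the spirit of Vu's original argument, which also proceeds by viewing $Y$ as a degree-$e(H)$ polynomial in the edge indicators and controlling the upper tail by an induction that bounds the ``partial derivatives'' $\E_F Y$ (expected number of extensions of a fixed copy of $F\subseteq H$) level by level, with balancedness used to check that the worst $F$ gives exponent $\mu^{1/(k-1)}$. One caution: the version you describe, with a Doob edge-exposure martingale plus Azuma/Freedman on a good event, is the Kim--Vu style framework rather than exactly what Vu does in~\cite{Vu}; the delicate point you flag --- that the conditional increments $D_i$ depend on the exposure history and can be much larger than the final count --- is real, and in Vu's treatment it is handled not by bounding $D_i$ directly along the filtration but by his general polynomial concentration inequality (which internalises the recursion over partial derivatives). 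Your sketch would need either to invoke that inequality or to carry out the multi-level peeling carefully; as written it identifies the right obstacles and the right exponent but stops short of a proof.
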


\noindent (Let us remark that stronger bounds on the upper tail are known; see, for example,~\cite{DeMarcoKahn}. For our purposes, however, the current one is sufficient.)


\bigskip

We will also make use of the following version of \emph{Harris's inequality} (see~\cite{Harris}). 
Let $G \in \G(n,p)$, with vertex set $V=[n]$ and random edge set $E$.  Let $F \subseteq \binom{V}{2}$ so $F$ is a set of possible edges. Let $\Omega$ be the collection of all subsets of the set $F$.  For $A \subseteq \Omega$ let $\Prob(A)$ mean $\Prob(E \cap F \in A)$.
For $A \subseteq \Omega$, we say that $A$ is a \emph{downset} if $x \in A$ and $y \subseteq x$ implies $y \in A$, and $A$ is an \emph{upset} if $x \in A$ and $x \subseteq y \subseteq \Omega$ implies $y \in A$.  We also may refer to the event $E \cap F \in A$ as a \emph{down-event} or an \emph{up-event} in these two cases.
\begin{lemma}\label{lem.Harris}
With notation as above, let $A$ and $B$ be upsets and let $C$ and $D$ be downsets.
Then $A$ and $B$ are positively correlated, i.e., $\Prob (A \cap B) \geq \Prob (A) \Prob (B)$, and similarly $C$ and $D$ are positively correlated;
and $A$ and $C$ are negatively correlated, i.e., $\Prob (A \cap C) \leq \Prob (A) \Prob (C)$.
\end{lemma}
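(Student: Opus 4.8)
The plan is to derive all three assertions from the classical correlation inequality for monotone functions on a product space, which I would prove by induction on the number of coordinates. Enumerate the possible edges of $F$ as $e_1,\dots,e_m$ with $m=|F|$, and identify an event $E\cap F\in A$ with a subset of $\{0,1\}^m$, where $\omega\in\{0,1\}^m$ has the product distribution with $\Prob(\omega_i=1)=p$ independently. Call $h:\{0,1\}^m\to\R$ \emph{increasing} if $\omega\le\omega'$ coordinatewise implies $h(\omega)\le h(\omega')$. The key claim is:
\[
\E[fg]\ge\E[f]\,\E[g]\qquad\text{whenever } f,g:\{0,1\}^m\to\R \text{ are both increasing.}
\]

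For $m=1$, if $X,Y$ are independent $\mathrm{Bernoulli}(p)$ variables then $(f(X)-f(Y))(g(X)-g(Y))\ge 0$ pointwise, since the two factors have the same sign; taking expectations and expanding yields $2(\E[fg]-\E[f]\,\E[g])\ge 0$. For the inductive step, write $\omega=(\omega',\omega_m)$ with $\omega'\in\{0,1\}^{m-1}$ and set $\phi(\omega_m)=\E_{\omega'}[f(\omega',\omega_m)]$, $\psi(\omega_m)=\E_{\omega'}[g(\omega',\omega_m)]$; these are increasing functions of $\omega_m\in\{0,1\}$. For each fixed value $b$ of $\omega_m$, the maps $\omega'\mapsto f(\omega',b)$ and $\omega'\mapsto g(\omega',b)$ are increasing on $\{0,1\}^{m-1}$, so the inductive hypothesis gives $\E_{\omega'}[f(\cdot,b)g(\cdot,b)]\ge\phi(b)\psi(b)$. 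Averaging over $\omega_m$ and then applying the $m=1$ case to $\phi,\psi$,
\[
\E[fg]=\E_{\omega_m}\!\big[\E_{\omega'}[fg]\big]\ge\E_{\omega_m}[\phi\,\psi]\ge\E_{\omega_m}[\phi]\,\E_{\omega_m}[\psi]=\E[f]\,\E[g].
\]

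The three conclusions then follow with indicator functions. If $A,B$ are upsets, $\mathbf{1}_A,\mathbf{1}_B$ are increasing, so $\Prob(A\cap B)=\E[\mathbf{1}_A\mathbf{1}_B]\ge\E[\mathbf{1}_A]\E[\mathbf{1}_B]=\Prob(A)\Prob(B)$. If $C,D$ are downsets, then $1-\mathbf{1}_C$ and $1-\mathbf{1}_D$ are increasing; expanding the inequality $\E[(1-\mathbf{1}_C)(1-\mathbf{1}_D)]\ge\E[1-\mathbf{1}_C]\,\E[1-\mathbf{1}_D]$ gives $\Prob(C\cap D)\ge\Prob(C)\Prob(D)$. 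If $A$ is an upset and $C$ a downset, then $\mathbf{1}_A$ and $-\mathbf{1}_C$ are increasing, so $\E[\mathbf{1}_A(-\mathbf{1}_C)]\ge\E[\mathbf{1}_A]\,\E[-\mathbf{1}_C]$, which rearranges to $\Prob(A\cap C)\le\Prob(A)\Prob(C)$.

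I do not expect any genuine obstacle here: this is the standard Harris/FKG argument and the write-up is short. The only point requiring a little care is the inductive step, namely checking that the partial averages $\phi,\psi$ over the first $m-1$ coordinates remain monotone in the last coordinate, which is immediate from monotonicity of $f,g$ together with the fact that averaging over a block of coordinates preserves the coordinatewise order. Alternatively, since the statement is exactly the product-measure case of the FKG inequality, one may simply cite it.
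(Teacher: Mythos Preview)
Your proof is correct and is the standard induction argument for Harris's inequality on the hypercube. Note that the paper does not actually give a proof of this lemma: it simply states it and cites Harris~\cite{Harris}, so there is no ``paper's own proof'' to compare against. Your write-up could therefore serve as a self-contained justification; the only cosmetic suggestion is that the alternative you mention at the end (``simply cite it'') is exactly what the paper does.
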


The final lemma here is a simplified version of Lemma~2.3 of~\cite{LMU03}.  It says roughly that a random walk which is biased upwards when needed tends to move upwards as we should expect.  We shall use it in the proof of Theorem~\ref{thm.ubforc} (to analyse a greedy procedure to find triangle-free sets).

\begin{lemma} \label{lem.bin2}
Let $\cF_0 \subseteq \cF_1 \subseteq \ldots \subseteq \cF_{k}$ be a filter
(in a probability space).
Let $Y_1,Y_2,\ldots,Y_k$ be binary random variables such that
each $Y_i$ is $\cF_i$-measurable, and let $Z = \sum_{i=1}^{k} Y_i$.
Let $E_0,E_1,\ldots,E_{k-1}$ be events where $E_i \in \cF_i$ for $i=0,\ldots,k-1$.
Let $0 \leq p \leq 1$ and let $z$ be a positive integer.
Suppose that for each $i= 1,\ldots,k$
\[ \Prob (Y_i = 1 \mid \cF_{i-1}) \geq p \;\;\; \mbox{ on } \; E_{i-1}. \]
Then  
\[ \Prob \left( (Z < z) \land \left(\bigwedge_i E_i \right) \right) \leq \pr (\Bin(k,p) < z).\]
\end{lemma}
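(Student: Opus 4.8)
The plan is a coupling argument. I would construct, on a suitably enlarged probability space, mutually independent Bernoulli$(p)$ variables $W_1,\ldots,W_k$ that are dominated, $W_i\le Y_i$, on the event $\bigcap_{i=0}^{k-1}E_i$. Granting this, on that event $\sum_{i=1}^k W_i\le\sum_{i=1}^k Y_i=Z$, so $\{Z<z\}\cap\bigcap_i E_i\subseteq\{\sum_i W_i<z\}$, and since $\sum_i W_i\sim\Bin(k,p)$ we get $\Prob\big((Z<z)\wedge\bigwedge_i E_i\big)\le\Prob(\Bin(k,p)<z)$, as required. So everything reduces to building the coupling.

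To build it, I would adjoin to the space independent uniform $[0,1]$ variables $U_1,\ldots,U_k$, mutually independent and independent of $\cF_k$. Set $q_i=\Prob(Y_i=1\mid\cF_{i-1})$, which is $\cF_{i-1}$-measurable and, by hypothesis, is $\ge p$ on $E_{i-1}$. Define $W_i$ as follows: on $E_{i-1}$, let $W_i=1$ precisely when $Y_i=1$ and $U_i\le p/q_i$ (reading $p/q_i$ as $1$ if $p=q_i=0$); off $E_{i-1}$, let $W_i=1$ precisely when $U_i\le p$. On $E_{i-1}$ this is the maximal coupling of $Y_i$ with a Bernoulli$(p)$ variable lying below it, so indeed $W_i\le Y_i$ there; and a one-line computation, using that $U_i$ is independent of $\cF_i$ while $\mathbf 1_{E_{i-1}}$ and $q_i$ are $\cF_{i-1}$-measurable, shows $\Prob(W_i=1\mid\cF_{i-1})=p$ on the whole space.

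The one thing that needs genuine care — and the step I expect to be the main obstacle — is verifying that $W_1,\ldots,W_k$ are genuinely i.i.d.\ Bernoulli$(p)$, so that $\sum_i W_i\sim\Bin(k,p)$. I would do this by showing inductively that $W_i$ is independent of $\mathcal G_{i-1}:=\cF_{i-1}\vee\sigma(U_1,\ldots,U_{i-1})$ with $\Prob(W_i=1\mid\mathcal G_{i-1})=p$; since $W_1,\ldots,W_{i-1}$ are $\mathcal G_{i-1}$-measurable, this gives mutual independence together with the Bernoulli$(p)$ marginals. The independence holds because, conditionally on $\cF_{i-1}$, the pair $(Y_i,U_i)$ has the product law $(\text{conditional law of }Y_i\text{ given }\cF_{i-1})\otimes(\text{Uniform}[0,1])$ and is independent of $(U_1,\ldots,U_{i-1})$, so the conditional-probability computation of the previous paragraph is unaffected by also conditioning on $U_1,\ldots,U_{i-1}$. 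Keeping this conditioning/measurability bookkeeping straight — in particular the harmless ``reset'' of $W_i$ off $E_{i-1}$, which is there only to make $W_i$ an honest Bernoulli$(p)$ coin without spoiling independence — is the crux; once it is done, the domination and the final inequality are immediate. (An alternative is induction on $k$, conditioning on $\cF_1$ and invoking the bound with $z$ replaced by the $\cF_1$-measurable integer $z-Y_1$, but that requires a conditional restatement of the lemma and a separate look at the case $z-Y_1=0$, so the coupling route seems preferable.)
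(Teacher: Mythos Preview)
Your coupling argument is correct. The thinning $W_i=\mathbf 1\{Y_i=1,\ U_i\le p/q_i\}$ on $E_{i-1}$ and the ``reset'' $W_i=\mathbf 1\{U_i\le p\}$ off $E_{i-1}$ do exactly what you claim: $W_i\le Y_i$ on $E_{i-1}$, and $\Prob(W_i=1\mid\mathcal G_{i-1})=p$ everywhere, since $U_i$ is independent of $(\cF_k,U_1,\ldots,U_{i-1})$ and $\Prob(Y_i=1\mid\mathcal G_{i-1})=\Prob(Y_i=1\mid\cF_{i-1})=q_i$ (the extra $U_j$'s being independent of $\cF_k$). Mutual independence of the $W_i$ then follows as you say, and the inclusion $\{Z<z\}\cap\bigcap_i E_i\subseteq\{\sum_i W_i<z\}$ gives the bound.

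As for comparison: the paper does not actually prove this lemma. It is stated in the preliminaries as ``a simplified version of Lemma~2.3 of~\cite{LMU03}'' and simply cited. So there is no in-paper argument to compare against; your coupling proof (or the inductive alternative you sketch) is the natural way to fill in what the paper leaves to the reference.
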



\section{Upper bounds}

\subsection{Upper bounds for parts (a) and (b)}

Part (a) and (b) of Theorem~\ref{thm:Gnp} follow immediately from the fact that $\chi_c(G) \le \chi(G)$ for each graph. It is well known that \whp
$$
\chi(\G(n,p)) \sim \frac {pn}{2 \log (pn)},
$$
provided that $pn \to \infty$ as $n \to \infty$, and $p=o(1)$ (see~\cite{L,M}). 

\subsection{Upper bound for part (c)}
For $p$ as here, we shall see that \whp\ each edge is in a triangle, so we can use triangle-free sets of vertices as colour classes.  We repeatedly use a natural greedy algorithm to construct maximal triangle-free sets and remove them.  While many vertices remain, with high probability each set constructed has size at least about $p^{-3/2} (\log n)^{1/2}$, so we will not remove too many of them. Later phases require a negligible number of further colours.  In the proof, in order to show that our greedy triangle-free sets are large, we need first to show that they do not tend to contain more edges than a typical set of the same size.

\begin{theorem} \label{thm.ubforc}
Suppose that $p = p(n)$ is such that $\sqrt{2 \log n /n} \leq p \leq 1/\log n$.  Let $c > \sqrt{2}$ be a constant. Then \whp\ for $G \in \G(n,p)$ we have
\[ \chi_c(G) \leq  c \, p^{3/2}n \, (\log n)^{-1/2}.\] 
\end{theorem}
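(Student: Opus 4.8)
The plan is to use a "strip a maximal triangle-free set, repeat" greedy procedure. Since $p \geq \sqrt{2\log n / n}$, a routine first-moment/union-bound computation shows that \whp\ every edge of $G$ lies in a triangle; in fact \whp\ every pair of vertices has many common neighbours. Consequently, in any induced subgraph on a set $U$ of vertices of size $\Theta(n)$, a maximal triangle-free subset $T \subseteq U$ is also a \emph{dominating} triangle-free set in the sense that no edge of $G[U]$ can have both endpoints outside $T$ unless... — more to the point, any triangle-free set that is maximal in $G[U]$ is a legitimate colour class for the whole graph restricted to the vertices processed so far, because every maximal clique has size at least $2$ (ignoring isolated vertices, and \whp\ there are none here) and a triangle-free set contains no triangle, hence certainly no larger clique, so no maximal clique of size $\geq 3$ is monochromatic; and a maximal clique of size $2$, i.e. an edge in no triangle, does not exist \whp. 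So it suffices to partition $V$ into $O(p^{3/2} n (\log n)^{-1/2})$ triangle-free sets.

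The core of the argument is to show that each greedy maximal triangle-free set is large — of size at least roughly $p^{-3/2}(\log n)^{1/2}$ — as long as $\Theta(n)$ vertices remain. Run the natural greedy algorithm on the remaining vertex set $U$: order the vertices arbitrarily and add a vertex to $T$ iff it creates no triangle with the vertices already in $T$. Let $Y_i$ be the indicator that the $i$-th vertex is added. The probability that vertex $i$ is \emph{rejected}, conditioned on the history, is at most the probability that it has two neighbours in the current $T$ that are themselves adjacent, i.e. at most the number of edges inside the current $T$ times $p^2$. So the key auxiliary fact is: \whp\ throughout the process, the greedy set $T$ never contains substantially more than $\binom{|T|}{2} p$ edges — it behaves like a typical set of its size. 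Given that, while $|T| = t \le C p^{-3/2}(\log n)^{1/2}$, the number of edges in $T$ is $O(t^2 p) = O(p^{-2}\log n)$, so a vertex is rejected with probability $O(p^2 \cdot p^{-2}\log n) = O(\log n)$ — which is useless as stated, so one must be more careful: the right bound to aim for is that while $t \leq c' p^{-3/2}(\log n)^{1/2}$ the rejection probability is bounded below $1$ by a constant (say $\le 3/4$), which forces $T$ to keep growing. Here is where Lemma~\ref{lem.bin2} enters: with $E_{i-1}$ the (high-probability) event that the current $T$ has few edges and $|T|$ is still small, the acceptance probability is $\geq 1/4$ on $E_{i-1}$, so the number of accepted vertices among the first $k$ stochastically dominates $\Bin(k, 1/4)$ from below, giving $|T| \geq c' p^{-3/2}(\log n)^{1/2}$ \whp\ for a suitable $k = \Theta(p^{-3/2}(\log n)^{1/2})$.

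For the auxiliary "$T$ is not edge-heavy" statement, I would bound, for each target size $t$ up to roughly $p^{-3/2}(\log n)^{1/2}$, the probability that the greedy set reaches size $t$ with more than, say, $K \binom{t}{2} p$ edges for a suitable constant $K$: the greedy set $T$ is determined by $G$, but one can union-bound over all $\binom{n}{t}$ candidate sets $S$, and over all subsets of $\binom{S}{2}$ of size exceeding $K\binom{t}{2}p$ forced to be edges, using Harris's inequality (Lemma~\ref{lem.Harris}) or a direct Chernoff bound (\eqref{eq:chern2}) on $\Bin(\binom{t}{2}, p)$ to control the edge count, then checking the exponent beats $t \log n$ for $t$ in the relevant range — this is exactly where the hypothesis $p \geq \sqrt{2\log n/n}$ and $t \le p^{-3/2}(\log n)^{1/2}$ are used, since then $\binom{t}{2} p \gg t \log n / (\text{something})$. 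After the main phase has removed all but $o(n)$ vertices — actually we stop once fewer than, say, $n/\log n$ vertices remain — the number of colours used is at most the number of removed sets, which is at most $n \big/ (c' p^{-3/2}(\log n)^{1/2}) = (c'/1)^{-1} p^{3/2} n (\log n)^{-1/2}$, and choosing the constants so that this is $< c\, p^{3/2} n (\log n)^{-1/2}$ for any $c > \sqrt 2$; the leftover $o(n)$ vertices are handled in a negligible number of extra colours (e.g. a crude bound such as one colour per vertex, or a few more greedy rounds, costing $o(p^{3/2} n (\log n)^{-1/2})$ provided $p^{3/2}n(\log n)^{-1/2} \gg n/\log n$, which holds since $p \gg (\log n/n)^{2/3}$... so in fact one needs the cleanup phase itself to be recursive/logarithmic — run the same greedy repeatedly, each round the relative error in the "edge-heavy" estimate is fine as long as $\Theta(\cdot)$ vertices remain, and geometric decrease gives $O(\log n)$ extra rounds, hence $o(1)$ fraction of the main count).

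The main obstacle I anticipate is precisely the auxiliary concentration statement that the \emph{adaptively built} greedy triangle-free set does not accumulate more edges than a random set of the same size: the set $T$ is a complicated function of the whole edge set, so one cannot apply Janson or Chernoff directly to "edges inside $T$". The union-bound-over-all-$t$-subsets approach works but is delicate because we need the bad-event probability to beat $n^t$ uniformly over the whole range $1 \le t \lesssim p^{-3/2}(\log n)^{1/2}$, and the margin is tight at the upper end of that range — this is the reason the theorem's constant is exactly $\sqrt 2$ and the hypothesis is $p \ge \sqrt{2\log n/n}$ rather than something weaker. A secondary technical point is correctly coupling the greedy acceptance indicators with a genuine $\Bin(k,1/4)$ via Lemma~\ref{lem.bin2}, making sure the conditioning events $E_i$ (few edges in $T$, $T$ still small) are $\cF_i$-measurable and hold \whp.
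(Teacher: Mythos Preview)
Your overall strategy---strip maximal triangle-free sets repeatedly, using that every edge is in a triangle---matches the paper.  But there is a real gap in the key step.

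You write that ``while $t \leq c' p^{-3/2}(\log n)^{1/2}$ the rejection probability is bounded below $1$ by a constant (say $\leq 3/4$)''.  This is false, and you in fact compute why just before: near the target size $t \sim \eta\, p^{-3/2}(\log n)^{1/2}$ the edge count inside $T$ is about $\tfrac12 t^2 p \sim \tfrac{\eta^2}{2} p^{-2}\log n$, so the crude rejection bound (edges)$\times p^2$ is of order $\log n$, not a constant.  The actual acceptance probability for a fresh candidate, given that $T$ has $x$ edges, is about $(1-p^2)^{x}$; with $x \sim \alpha p^{-2}\log n$ this is $n^{-\alpha+o(1)}$, i.e.\ polynomially small.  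Consequently Lemma~\ref{lem.bin2} with success probability $1/4$ cannot reach size $p^{-3/2}(\log n)^{1/2}$; your argument as written only yields triangle-free sets of size $\Theta(p^{-3/2})$ and hence $\chi_c(G) = O(p^{3/2}n)$, missing the crucial $(\log n)^{-1/2}$.

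The paper's fix is to test far more candidates than the target size: it shows that within $t = p^{-3/2} n^{\beta}$ trials (with $\tfrac12 \eta^2 < \alpha < \beta < \tfrac14$, so $t = o(n)$) the greedy procedure reaches size $j_0 \sim \eta p^{-3/2}(\log n)^{1/2}$ with probability $1-o(1/n)$, because each added vertex costs about $n^{\alpha}$ trials and $n^{\beta}/j_0 \gg n^{\alpha}$.  Lemma~\ref{lem.bin2} is used only in the cleanup phase, where the target set size is $\Theta(p^{-3/2})$ and the acceptance probability genuinely is bounded below by a constant.  For the edge-count control, the paper does \emph{not} union-bound over all $t$-subsets (which would be tight at best); instead it proves directly, by induction and Harris's inequality, that the number of edges inside the greedy set $A_j$ is stochastically at most $\Bin(\binom{j}{2},p)$---the triangle-free acceptance condition is a down-event, so conditioning on it can only decrease edge counts.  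This stochastic domination is both cleaner and what makes the constant $\sqrt{2}$ come out.
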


\begin{proof}
First let us check that \whp\ each edge is in a triangle, for then we can colour using triangle-free sets.
The expected number of edges that are not part of any triangle is
\[ \binom{n}{2} p (1-p^2)^{n-2} \leq n^2 p e^{-np^2} (1-p^2)^{-2} = o(1)\] 
since $e^{-np^2} \leq n^{-2}$ and $p=o(1)$. The desired property now follows from Markov's inequality.

We will repeatedly use the following greedy algorithm to construct a
triangle-free set $A$ of vertices in a graph $G$ with vertex set a subset of the positive integers.  Consider the vertices in increasing order.  Initially set $A=\emptyset$ and $W=V$.
While $W$ is non-empty: remove the first member $w$ from $W$, and if $A \cup \{w\}$ is triangle-free then add $w$ to $A$.  Note that this procedure examines only potential edges with at least one end in $A$.

How large is the final triangle-free set $A$ for $G \in \G(n,p)$?  The first thing to consider is the number of edges of $G$ within $A$ when $|A|=a$.  We shall see in part (1) of the proof below that this is stochastically at most $\Bin(\binom{a}{2},p)$.  In part (2) of the proof we shall define appropriate integers $j_0$ and $t$ such that \whp\ we find a triangle-free set of size $j_0$ within $t$ steps; and we complete the proof in part (3).

\emph{Part (1)}. First we consider the number of edges within the set $A$.
Let $n$ be a (fixed) integer, with corresponding $p=p(n)$, and embed the random graph $G$ in an infinite random graph $G'$ on $1,2,3,\ldots$ with the same edge probability $p$.  Let $E'$ be the random edge-set of $G'$.  For $j=1,2,\ldots$ let $A_j$ be the set $A$ when $|A|=j$ (which is well defined for each $j$ since we are working in $G'$ and $p<1$).  Let $X_j$ be the number of edges within $A_j$ (and set $X_0=0$), and let $Y_j= X_j-X_{j-1}$.
 
We {\bf claim} that $X_j \leq_s \Bin(\binom{j}{2},p)$.  This is clearly true for $j=1$ since $X_1=0$.  Let $j \geq 2$  and suppose it is true for $j-1$.  
Let $0 \leq x \leq \binom{j-1}{2}$ and let $E_1$ be the event that $X_{j-1}=x$.  Let $B$ be a set of $j-1$ vertices, 
let $F_0$ be a set of $x$ edges within $B$, and let $w$ be an element of $V$ after all of $B$.
Let $E_2$ be the event that $A_{j-1}= B$ and $F_0$ is the set of edges of $G'$ within $B$.
Let $E_3$ be the event that $w$ is the vertex added to $A_{j-1}$ to form $A_j$.

Let $F_1$ be the set of $j-1$ possible edges between $w$ and $B$.  
Let $F'_1 = E' \cap F_1$, the random set of edges present in $F_1$.  Note that $|F'_1| \sim \Bin(j-1,p)$.  For each real $t$, let $f(t) = \Prob (\Bin(j-1,p) \geq t)$: and observe that $|F'_1| \geq t$ is an up-event.
Let 
\[\cF_1=\{F \subseteq F_1 : |F \cap \{ uw, vw \}| \leq 1 \mbox{ for each edge } uv \in F_0 \},\]
and observe that $F'_1 \in \cF_1$ is a down-event. 
Hence, for each $t$, by Harris's inequality, Lemma~\ref{lem.Harris},
\begin{eqnarray*}
   \Prob (Y_j \geq t \mid E_1 \land E_2 \land E_3) 
 & = &
  \Prob \left( |F'_1| \geq t \mid E_1 \land E_2 \land E_3 \right) \\
 & = &
  \Prob \left( |F'_1| \geq t  \mid F'_1 \in \cF_1 \right)  \\
 & \leq & 
  \Prob \left( |F'_1| \geq t \right) \; = \; f(t).
\end{eqnarray*}
Since this holds for each possible choice of $B$, $F_0$ and $w$ specifying events $E_2$ and $E_3$, we obtain
 \[ \Prob(Y_j \geq t \mid X_{j-1}=x) = \Prob(Y_j \geq t \mid E_1) \leq f(t).\]
Now fix $t \geq 0$, and let $g(x)= f(t-x) =\Prob(\Bin(j-1,p) \geq t-x)$. 
By the last inequality, since $X_j=X_{j-1}+Y_j$,
\begin{eqnarray*}
   \Prob(X_j \geq t) &=&
   \sum_x \Prob(X_{j-1}=x) \, \Prob(Y_j \geq t-x \mid X_{j-1}=x)\\
 & \leq &
   \sum_{x} \Prob(X_{j-1}=x) \, g(x) \; = \E[g(X_{j-1})].
\end{eqnarray*}
But $X_{j-1} \leq_s \Bin(\binom{j-1}{2},p)$ by the induction hypothesis, and $g(x)$ is non-decreasing, so
\begin{eqnarray*}
   \E[g(X_{j-1})] & \leq &
   \E[g(\Bin(\binom{j\!-\!1}{2},p))] \\
 &=&
   \sum_x \Prob(\Bin(\binom{j\!-\!1}{2},p)=x) \, \Prob(\Bin(j\!-\!1,p) \geq t\!-\!x)\\
  &=& 
   \Prob(\Bin(\binom{j}{2},p) \geq t).
\end{eqnarray*}
Thus 
\[ \Prob(X_j \geq t) \leq \Prob(\Bin(\binom{j}{2},p) \geq t) , \]
and we have established the claim that $X_j \leq_s \Bin(\binom{j}{2},p)$ for each $j$.  This completes part (1) of the proof.
\smallskip
 
\emph{Part (2)}. Still in the infinite case, let $T_j$ be the number of vertices $w$ tested until we find one to add to $A_{j-1}$.  For $j=1, 2$ the first available vertex must be added. Fix $j \geq 3$.  For each positive integer $i$, let $W^i$ be the $i$th vertex in $W$ after the largest vertex in $A_{j-1}$ (the $i$th candidate to add to $A_{j-1}$).
 
Let $x$, $B$, $F$, $w$, $E_1$, $E_2$ and $\cF_1$ be as before.  Observe that $\cF_1$ is an intersection of $x$ down-sets in $2^{F_1}$, and for each of these downsets $\cD$ we have $\pr(F'_1 \in \cD)= (1-p^2)$.  By Harris's inequality (Lemma~\ref{lem.Harris}) again, 
\[ \pr(w \mbox{ can be added to } A_{j-1} \mid E_1 \land E_2)
= \pr(F'_1 \in \cF_1) \geq (1-p^2)^x.\]

Since this holds for each possible choice of $B$, $F$ and $w$, it follows that for each positive integer $i$ we have
\[ \pr(W^i \mbox{ can be added to } A_{j-1} \mid X_{j-1} = x)  \geq (1-p^2)^x. \]
Hence
\[ \pr(T_j \geq t \mid X_{j-1} =x) \leq (1-(1-p^2)^x)^{t-1},\]
and since the upper bound is nondecreasing in $x$, it follows that
\[ \pr(T_j \geq t \mid X_{j-1} \leq x) \leq (1-(1-p^2)^x)^{t-1}.\]
Let $0< \eta < 1/\sqrt{2}$, and let $j_0 \sim \eta p^{-3/2} (\log n)^{1/2}$.  Note that $j_0 = O(n^{3/4} (\log n)^{-1/4}) = o(n^{3/4})$, and $j_0 = \Omega(\log^{2}n)$. Observe that $\frac12 \eta^2 < \frac14$; and let $\frac12 \eta^2 < \alpha < \beta < \frac14$.
Let $\mu_0 = \binom{j_0}{2} p \sim (\eta^2 /2) p^{-2} \log n$; and let $x_0 \sim \alpha p^{-2} \log n$.  Then for each $j =1,\ldots, j_0$, by the Chernoff bound~(\ref{eq:chern2}) 
\[ \Prob \left( \Bin \left( \binom{j\!-\!1}{2},p \right) > x_0 \right) \leq \Prob \left( \Bin \left( \binom{j_0}{2},p \right) > x_0 \right) \leq e^{- \Omega(\mu_0)} =o(n^{-2}),\]
where the final step follows since $p=o(1)$.
Let $t= p^{-\frac32} n^{\beta}$, so $t = o(n^{\frac34 + \beta})$ and $t/j_0 = n^{\beta+o(1)}$.  Then
\begin{eqnarray*}
    \pr \left( \sum_{j=1}^{j_0} T_j \geq t \right) 
 & \leq & 
  \sum_{j=1}^{j_0} \pr(T_j \geq t/j_0 ) \\
  & \leq & 
  \sum_{j=1}^{j_0} \Big( \pr(T_j \geq t/j_0 \mid X_{j-1} \leq x_0) + \Prob(X_{j-1} > x_0) \Big)\\
  & \leq & j_0 (1- (1-p^2)^{x_0})^{t/j_0 -1} +  j_0 \, \Prob(X_{j_0 -1} > x_0)  \\
  & \leq &  
   n (1- (1-p^2)^{x_0})^{t/j_0 -1} + o(1/n)
\end{eqnarray*}
by the above. But since $p=o(1)$, 
\[(1-p^2)^{x_0} =  e^{-(1+o(1))p^2 x_0} = e^{-(\alpha +o(1)) \log n} = n^{ - \alpha +o(1)} \]
and so 
\[ (1- (1-p^2)^{x_0})^{t/j_0 -1} \leq \exp(- n^{ - \alpha +o(1)} \cdot n^{\beta+o(1)}) = \exp(- n^{ \beta - \alpha +o(1)}). \]
Thus
\[ \pr \left( \sum_{j=1}^{j_0} T_j \geq t \right) \leq n \exp(- n^{ \beta - \alpha +o(1)}) + o(1/n) = o(1/n). \]
In other words, the probability that the greedy algorithm fails to find a triangle-free set of size $j_0$ within the first $t-1$ available vertices is $o(1/n)$.
Now let us return to the finite case $G \in \G(n,p)$, and note that the last statement still applies.  This completes part (2) of the proof.

\emph{Part (3)}. Let us repeatedly seek a maximal triangle-free set of size at least $j_0$, and remove it, as long as at least $t$ vertices remain.
Let $V'$ be the set of vertices remaining at some step (when we are about to seek a new triangle-free set).
Each time, we have $|V'| \geq t$ (that is, we start with at least $t$ vertices), and the potential edges have not been examined before.  Hence, each time, the probability we fail is $o(1/n)$, and so the probability we ever fail is $o(1)$.  This whole process uses at most $n/j_0 \sim \eta^{-1} p^{3/2} n (\log n)^{-1/2}$ colours.  Observe for later that this number is $\Omega(n^{1/4})$. 
\smallskip

By this stage, $|V'| <t$.  While $|V'| \geq \log^2 n$, we continue to seek maximal triangle-free sets using the same greedy algorithm, but now we will be content with smaller sets.  Any remaining vertices can each be given a new colour, adding at most $\log^2 n$ colours.  We need new notation.

Let $s^* = \lceil p^{-3/2}\rceil$, and let $s=\min\{s^*, |V'| \}$.  List the vertices in $V'$ in their natural increasing order as $v'_1, v'_2,\ldots$. For $i=1,\ldots,s$ let $\cF_i$ be the $\sigma$-field generated by the appearance or not of the edges amongst $v'_1,\ldots,v'_i$; let $E_i$ be the random set of edges amongst $v'_1,\ldots,v'_i$ which appear; and let $Y_i$ be the indicator that the $i$th vertex $v'_i$ is accepted into $A$.
%
Let $X=|E_{s}|$.  Then $X \sim \Bin(\binom{s}{2},p)$, with expected value at most about $\frac12 p^{-2} = \Omega(\log^2 n)$.  Let $x_0=\frac23 p^{-2}$.  Then $\Prob(X>x_0) = o(1/n)$ by the Chernoff bound~(\ref{eq:chern2}).

Given that $E_{i-1}=F$, the event that $Y_i=0$ is the union over the edges $e \in F$ of the event that $v'_i$ is adjacent to both end vertices of $e$.  Thus, for each event $B \in \cF_{i-1}$
\[ \pr\left(Y_i=0 \mid B \land (|E_{i-1}| \leq x_0)\right) \leq x_0 p^2 \leq 2/3\]
(assuming that the conditioning event has positive probability).
Hence
\[ \pr(Y_i=1 \mid \cF_{i-1}) \geq 1/3 \;\; \mbox{ on the event } (|E_{i-1}| \leq x_0).\]

Let $Z$ be the value of $|A|$ after trying to add $v'_1,\ldots,v'_s$ (starting from $A=\emptyset$).  Thus $Z= \sum_{i=1}^{s} Y_i$.  Hence by Lemma~\ref{lem.bin2} and the Chernoff bound~(\ref{eq:chern2})
\[ \pr((Z < s/4) \land (X_{s-1} \leq x_0)) \leq \pr( \Bin(s,1/3) < s/4) = e^{-\Omega(s)} = o(1/n), \]
and so
\[ \pr(Z < s/4) \leq \pr(X_{s-1} > x_0) +o(1/n) = o(1/n).\]
Throughout the time when $t>|V'| \geq s^*$, \whp\ each triangle-free set $A$ found has size at least $s^*/4$; so the number of colours used is at most $4t/s^* \sim 4n^{\beta}$.  Throughout the time when $s^* >|V'| \geq \log^2 n$, \whp\ each triangle-free set $A$ found has size at least $|V'|/4$; so the number of colours used is $O(\log |V'|) = O(\log n)$.   As we noted before, the final phase uses $O(\log^2 n)$ colours.  Thus the total number of colours used after the first phase is $O(n^{\beta})$, which is negligible compared with $n/j_0 = \Omega(n^{\frac14})$.  
\end{proof}

\subsection{Upper bounds for parts (d)-(g)}

Before we state a useful observation that holds for deterministic graphs, let us introduce a few definitions. An \emph{independent set} (or a \emph{stable set}) of a graph $G = (V, E)$ is a set of vertices in $G$, no two of which are adjacent. A \emph{dominating set} in $G$ is a subset $D$ of $V$ such that every vertex not in $D$ is adjacent to at least one member of $D$. Of course, a set could be both independent and dominating; this situation happens if and only if the set is a maximal independent set. Finally, for $v \in V$ let $N(v)$ denote the set of neighbours of $v$.

The following lemma is part of Theorem 3 of~\cite{BGGPS2004}: we give a short proof here for completeness.  
\begin{lemma}\label{lem:deterministic_upperbound}
Let $G = (V,E)$ be a graph, 
and suppose that $A \subseteq V$ is a dominating set. 
Then $\chi_c(G) \le |A|+1$.
\end{lemma}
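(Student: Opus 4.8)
The plan is to give a colouring using colours $\{1,2\}$ on $A$ and giving every vertex outside $A$ its own distinct new colour, so that the total number of colours is at most $|A| + 1$ (actually $|A|+2$, but a small trick saves a colour). Concretely, I would first handle the vertices outside $A$: colour $V \setminus A$ with $|V \setminus A|$ distinct colours, none of which is used on $A$. Then every maximal clique $S$ that contains two or more vertices of $V \setminus A$ is automatically non-monochromatic, so the only cliques we must worry about are those meeting $V \setminus A$ in at most one vertex. Since $A$ is dominating, I claim a maximal clique $S$ cannot be contained entirely in $V \setminus A$ unless it is a single vertex: if $v \in V \setminus A$ then $v$ has a neighbour $a \in A$, and if $S = \{v\}$ is maximal then... wait, that shows $\{v\}$ is not maximal, so actually every maximal clique meets $A$. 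Good — so every maximal clique $S$ satisfies $|S \cap A| \ge 1$.

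**The colouring of $A$.** Now I must 2-colour $A$ so that no maximal clique with $|S \setminus A| \le 1$ is monochromatic. Split into two cases. If $|S \cap A| \ge 2$: I want the two-colouring of $A$ to be such that $S \cap A$ is never monochromatic whenever $S$ is a maximal clique — but that is exactly asking for a clique-colouring-type condition on the subgraph induced by $A$, which is not free. So instead I use the cheap route: colour $A$ with $|A|$ distinct colours as well, giving $|A| + |V\setminus A| = |V|$ colours, which is too many. The right idea is the one actually needed: colour $V \setminus A$ with distinct new colours and colour $A$ monochromatically with one further colour, for a total of $|V \setminus A| + 1$ — but $|V \setminus A|$ can be close to $|V|$, so that's also wrong. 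Let me reconsider: the statement says $|A| + 1$, so the colours must live on $A$. So the colouring is: use colours $1, \dots, |A|$ bijectively on $A$, and use one extra colour $0$ on all of $V \setminus A$. Total $|A| + 1$ colours. A maximal clique $S$ has $|S \cap A| \ge 1$ as shown. If $|S \cap A| \ge 2$, then $S$ contains two vertices of $A$ with distinct colours, done. If $|S \cap A| = 1$, say $S \cap A = \{a\}$, then $S \setminus \{a\} \subseteq V \setminus A$; if $S = \{a\}$, then $\{a\}$ maximal means $a$ has no neighbour — but then $a$ is isolated and we ignore it; if $S \ne \{a\}$, then $S$ contains $a$ (colour $\ne 0$) and some vertex of $V \setminus A$ (colour $0$), so $S$ is not monochromatic.

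**The only obstacle** is the degenerate/isolated-vertex bookkeeping: one must check that the single place where the argument could fail — a maximal clique that is a single vertex $a \in A$ — occurs only when $a$ is isolated, which is exactly the case the definition of clique colouring tells us to ignore. Everything else is immediate from the two observations that (i) $A$ dominating forces every maximal clique to contain a vertex of $A$, and (ii) a maximal clique meeting $A$ in exactly one vertex and having size $\ge 2$ must contain a vertex outside $A$. No calculation is involved; the proof is a couple of lines once these cases are laid out.
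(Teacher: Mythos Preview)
Your proposed colouring --- give each vertex of $A$ its own colour from $\{1,\ldots,|A|\}$ and colour all of $V\setminus A$ with colour $0$ --- does \emph{not} work. The gap is in the claim that ``every maximal clique meets $A$''. You only argued that a singleton $\{v\}$ with $v \in V\setminus A$ cannot be maximal (since $v$ has a neighbour in $A$), and then leapt to the conclusion for all maximal cliques. But a maximal clique $S \subseteq V\setminus A$ with $|S|\ge 2$ is perfectly possible: domination says each vertex of $S$ has \emph{some} neighbour in $A$, not that they have a \emph{common} neighbour in $A$.

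Concretely, take $V=\{a,b,u,v\}$ with edges $au$, $bv$, $uv$, and $A=\{a,b\}$. Then $A$ is dominating, yet $\{u,v\}$ is a maximal clique lying entirely in $V\setminus A$ (neither $a$ nor $b$ is adjacent to both $u$ and $v$). Your colouring gives $u$ and $v$ the same colour $0$, so this maximal clique is monochromatic.

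The paper's argument goes the other way round: it assigns the $|A|$ ``named'' colours to $V\setminus A$ (colour $i$ to the uncoloured neighbours of $v_i$, processed in order), and then colours the leftover vertices --- necessarily an independent subset of $A$ --- with a single colour $0$. The point is that each colour class $i\ge 1$ is dominated by the single vertex $v_i$, which itself carries a different colour; hence no maximal clique can sit inside colour class $i$. Colour class $0$ is independent, so no edge lies in it. This sidesteps entirely the issue of maximal cliques inside $V\setminus A$.
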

\begin{proof}
Let $A = \{v_1, v_2, \ldots, v_k\}$.
For $i=1,\ldots,k$ in turn, assign colour $i$ to each uncoloured neighbour of $v_i$.
Any vertices remaining uncoloured must form an independent subset of $A$: give each such vertex colour 0.
This gives a proper clique-colouring, since any vertices coloured 0 form an independent set; and for each $i=1,\ldots,k$ the set of vertices coloured $i$ is dominated by $v_i$, which is not coloured $i$.
\end{proof}

From Lemma~\ref{lem:deterministic_upperbound}, we quickly get the following bound for binomial random graphs. This proves the upper bounds in parts (d) to (g) of Theorem~\ref{thm:Gnp}.
We say that $p=p(n)$ is \emph{bounded below 1} if there is some constant $\varepsilon>0$ such that $p \leq 1-\varepsilon$ for all $n$.

\begin{lemma}\label{lem.upperbound_sparse_gnp}
Suppose that $p=p(n)$ is bounded below 1, and $\omega = \omega(n) := pn - \log n \to \infty$ as $n \to \infty$.
Let $k=k(n) = \lceil \log_{1/(1-p)} n + \log_{1/(1-p)} \omega \rceil$. Then for $G \in \G(n,p)$ \whp 
$$
\chi_c(G) \; \le \; k + 1 \; \sim \; \log_{1/(1-p)} n.
$$
\end{lemma}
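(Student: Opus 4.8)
The plan is to invoke Lemma~\ref{lem:deterministic_upperbound}: it suffices to show that \whp\ $G \in \G(n,p)$ has a dominating set of size at most $k = \lceil \log_{1/(1-p)} n + \log_{1/(1-p)} \omega \rceil$. The natural candidate is simply the first $k$ vertices $A = \{1,2,\ldots,k\}$ (any fixed $k$-set will do), and the goal is to bound the probability that some vertex outside $A$ is dominated by none of them. First I would compute, for a fixed vertex $v \notin A$, that $v$ has no neighbour in $A$ with probability exactly $(1-p)^k$. Plugging in the definition of $k$ gives $(1-p)^k \le (1-p)^{\log_{1/(1-p)} n + \log_{1/(1-p)} \omega} = 1/(n\omega)$. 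Then a union bound over the at most $n$ vertices outside $A$ yields that the probability $A$ fails to dominate is at most $n \cdot \frac{1}{n\omega} = 1/\omega = o(1)$, since $\omega \to \infty$. Hence \whp\ $A$ is a dominating set, and Lemma~\ref{lem:deterministic_upperbound} gives $\chi_c(G) \le k+1$.

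It remains to check the asymptotic claim $k+1 \sim \log_{1/(1-p)} n$. Here I would observe that $\log_{1/(1-p)} \omega = \frac{\log \omega}{\log(1/(1-p))}$, while $\log_{1/(1-p)} n = \frac{\log n}{\log(1/(1-p))}$, so the ratio of the two is $\log\omega / \log n$. Since $\omega = pn - \log n \le pn \le n$, we have $\log \omega \le \log n$, so $\log_{1/(1-p)}\omega = o(\log_{1/(1-p)} n)$ provided $\log \omega = o(\log n)$. In fact the hypothesis $\omega = o(\sqrt{\log n})$ from Theorem~\ref{thm:Gnp} (or more weakly $\omega \le n^{o(1)}$, which holds here since $\omega \le pn \le n$ but we need the sharper bound) forces $\log \omega = o(\log n)$; combined with the additive $+1$, which is also negligible since $\log_{1/(1-p)} n \to \infty$ (as $pn - \log n \to \infty$ implies $\log n \cdot \log(1/(1-p)) \to \infty$ when $p \to 0$, and trivially $\log_{1/(1-p)} n = \Theta(\log n) \to \infty$ when $p = \Omega(1)$), we get $k + 1 = (1+o(1)) \log_{1/(1-p)} n$.

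The argument is essentially immediate once one has Lemma~\ref{lem:deterministic_upperbound} in hand, so there is no serious obstacle. The only point requiring a little care is the verification that $\log_{1/(1-p)} \omega = o(\log_{1/(1-p)} n)$, i.e.\ that the $\omega$-correction term in the definition of $k$ is genuinely lower-order: this needs $\log\omega = o(\log n)$, which is why it matters that $\omega$ grows slowly (the paper only needs $\omega$ tending to infinity slowly enough, and indeed $\omega = o(\sqrt{\log n})$ is far more than enough). One should also double-check the edge case where $p$ is bounded away from both $0$ and $1$: then $\log(1/(1-p)) = \Theta(1)$, $\log_{1/(1-p)} n = \Theta(\log n)$, and everything goes through identically, with the displayed dichotomy for $\log_{1/(1-p)} n$ stated just after Theorem~\ref{thm:Gnp} handling the translation back to the more familiar scale.
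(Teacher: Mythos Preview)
Your main argument---take a fixed $k$-set $A=\{1,\ldots,k\}$, bound the probability that it fails to dominate by $n(1-p)^k \le 1/\omega = o(1)$, and invoke Lemma~\ref{lem:deterministic_upperbound}---is correct and essentially the paper's approach. The paper phrases it as building a greedy maximal independent set and showing it terminates after at most $k$ steps \whp, but the underlying computation $n(1-p)^k \le \omega^{-1}$ is identical; your variant is if anything slightly more direct.

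There is, however, a genuine confusion in your treatment of the asymptotic $k+1 \sim \log_{1/(1-p)} n$. The quantity $\omega$ in this lemma is \emph{by definition} $\omega(n) := pn - \log n$; it has nothing to do with the slowly-growing function $\omega = o(\sqrt{\log n})$ appearing in the statement of Theorem~\ref{thm:Gnp}, and you cannot import that hypothesis here. In particular, if $p$ is a constant in $(0,1)$ then $\omega \sim pn$, so $\log\omega \sim \log n$ and your claimed bound $\log\omega = o(\log n)$ fails outright. (The paper's own proof does not separately verify this asymptotic; for the applications to parts (d)--(g) of Theorem~\ref{thm:Gnp} only the weaker estimate $k+1 = O(\log_{1/(1-p)} n)$ is actually used, and that much is immediate since $\omega \le n$.)
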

\begin{proof}
We construct a maximal independent set in the usual greedy way, by fixing an arbitrary order of the vertices $v_1,v_2,\ldots,v_n$ and adding vertices (in this order, one by one) to form a maximal independent set. Observe that at the beginning of a given step $t$ of this process, no edge emanating from $v_t$ has been exposed so far. Note that \whp\ the process terminates after at most $k$
vertices are added to the independent set: indeed, the probability that a larger set is created during this greedy process is 
the probability that a set of size $k$ is created at some point of the process, and this set does not dominate the remaining vertices; and (by considering just the second part) this probability is at most $n (1-p)^k \le \omega^{-1} = o(1)$. The result now follows by Lemma~\ref{lem:deterministic_upperbound}.
\end{proof}

\subsection{Upper bound for part (h)}

We will now show that for dense random graphs, we can improve the bound from part (d) by a factor of about $2$. This then proves part (h) of Theorem~\ref{thm:Gnp}. (For sparser graphs, the improvement might also be possible but the argument would be more tedious.)

\begin{theorem}
Suppose that $p=p(n)$ is bounded below 1, and satisfies $p = n^{-o(1)}$.
Then, for $G \in \G(n,p)$ \whp 
$$
\chi_c(G) \le (1/2+o(1)) \log_{1/(1-p)} n.
$$
\end{theorem}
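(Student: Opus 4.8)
The plan is to sharpen the argument behind Lemma~\ref{lem.upperbound_sparse_gnp}: there one takes a greedy maximal independent set $I$, uses it as a dominating set, and spends one colour per vertex of $I$ (plus colour $0$ for $I$ itself), for a total of $(1+o(1))\log_{1/(1-p)}n$ colours. The aim is to let \emph{pairs} of vertices of $I$ share a colour, which would halve the count. Two structural facts should be recorded first, holding \whp\ in the range $p=n^{-o(1)}$, $p\le 1-\eps$. (a) $G$ has no isolated vertices, so a single vertex is never a maximal clique. (b) Every maximal clique of $G$ has size at least $t_0:=(1-o(1))\log_{1/p}n$, and $t_0\to\infty$; this comes from a first moment estimate, since the expected number of maximal $t$-cliques is at most $\binom nt p^{\binom t2}(1-p^t)^{n-t}\le \binom nt p^{\binom t2}e^{-(n-t)p^t}$, and the factor $e^{-np^t}$ overwhelms $\binom nt p^{\binom t2}$ as soon as $t\le(1-\eps')\log_{1/p}n$.

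Next, build the greedy maximal independent set $I=\{v_1,\dots,v_\ell\}$, listed in the order the vertices are added; as in Lemma~\ref{lem.upperbound_sparse_gnp}, \whp\ $\ell=(1+o(1))\log_{1/(1-p)}n$. Colour all of $I$ with colour $0$; since $I$ is independent, facts (a)--(b) guarantee that no maximal clique lies inside the colour-$0$ class. For $v\notin I$ let $r(v)$ be the smallest-indexed neighbour of $v$ in $I$ (it exists because $I$ dominates). I would pair $v_{2i-1}$ with $v_{2i}$ and try to put each $v$ with $r(v)\in\{v_{2i-1},v_{2i}\}$ into a single colour class $C_i\subseteq N(v_{2i-1})\cup N(v_{2i})$, giving $\lceil\ell/2\rceil+1=(1/2+o(1))\log_{1/(1-p)}n$ colours in all. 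The easy half of correctness: if a maximal clique $S$ meets $I$ then it contains a colour-$0$ vertex together with colour-$\ge 1$ vertices (every vertex of $S$ outside $I$ has a neighbour in $I$), so $S$ is not monochromatic; and if $S$ is disjoint from $I$ then, since $S$ is maximal, no vertex of $I$ is adjacent to all of $S$, so were $S$ monochromatic of colour $i$ it would lie in $N(v_{2i-1})\cup N(v_{2i})$ while being extendable by neither $v_{2i-1}$ nor $v_{2i}$; that is, $S$ must \emph{straddle} the two neighbourhoods, containing a vertex adjacent to $v_{2i-1}$ but not $v_{2i}$ and a vertex adjacent to $v_{2i}$ but not $v_{2i-1}$.

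Ruling out these straddling monochromatic maximal cliques is the heart of the matter, and it is where the randomness of $G$ must enter: a naive deterministic assignment of vertices to the classes $C_i$ does leave them possible. The plan is to exploit fact (b) — maximal cliques are large, in particular of size $\ge 3$ — by being selective, committing to colour $i$ only a carefully chosen subset of $\{v:r(v)\in\{v_{2i-1},v_{2i}\}\}$ so that every clique of $G$ sitting inside $C_i$ and straddling the two neighbourhoods is forced either to be extendable in $G$ or to be too small to be a maximal clique; the vertices not committed to any $C_i$ are collected into a residual set $R$ and the construction is iterated on $G[R]$ with a disjoint fresh palette, until $R$ has only $n^{o(1)}$ vertices and can be clique-coloured by applying Lemma~\ref{lem:deterministic_upperbound} (or Lemma~\ref{lem.upperbound_sparse_gnp}) crudely with $o(\log_{1/(1-p)}n)$ further colours.

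I expect the genuine obstacle to be making this thinning both \emph{effective} and \emph{cheap} at the same time: thinning $C_i$ down to a subset of a single neighbourhood (the obviously safe choice) discards a constant fraction of its vertices into $R$, so $R$ would shrink only geometrically and the iteration would cost $\Theta(\log_{1/(1-p)}n)$ colours per round over $\Theta(\log n)$ rounds — far too much. So one needs a subtler argument: either a probabilistic choice of which member of each pair ``owns'' a borderline vertex, or a direct argument that (for a suitable choice of $I$ and pairing) the straddling maximal cliques can be killed by removing only $n^{o(1)}$ vertices per class, using that any clique of size $\ge t_0$ inside $N(v_{2i-1})\cup N(v_{2i})$ is extendable unless it is essentially of extremal size. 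Controlling this trade-off — how aggressively each $C_i$ is thinned versus how large a residue can be tolerated — so that the resulting sum of colour costs is $(1/2+o(1))\log_{1/(1-p)}n$ rather than $(1+o(1))\log_{1/(1-p)}n$ is, I believe, the crux of the proof, and it is what makes the denser regime $p=n^{-o(1)}$ (large $t_0$) essential.
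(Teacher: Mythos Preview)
Your proposal has a genuine gap, which you yourself identify: you never actually carry out the thinning of the classes $C_i$ that would kill the straddling maximal cliques while discarding only $n^{o(1)}$ vertices per class. The geometric-decay concern you raise is real, and nothing in the sketch explains how to beat it; without that, the pairing idea does not yield the factor $1/2$.

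The paper sidesteps this difficulty entirely with a much simpler idea. Instead of building a full maximal independent set of size $(1+o(1))\log_{1/(1-p)} n$ and then trying to pair its vertices, one builds the greedy independent set but \emph{stops early}, at size $k=\lceil (1/2+\gamma)\log_{1/(1-p)} n\rceil$ for a suitable $\gamma=o(1)$. This half-size set $A$ is no longer dominating, but \whp\ the number $N$ of vertices not in $A$ and not dominated by $A$ satisfies $N\le N_0:=\lfloor 2n(1-p)^k\rfloor\le 2n^{1/2-\gamma}$. Colour $A$ and the dominated vertices using $k+1$ colours exactly as in Lemma~\ref{lem:deterministic_upperbound}, and give all the undominated vertices a \emph{single} extra colour. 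The induced subgraph on these $\le N_0$ vertices is distributed as a subgraph of $\mathcal G(N_0,p)$, so \whp\ its largest clique has size at most $\lceil 2\log_{1/p} N_0\rceil < \log_{1/p} n - 3\log_{1/p}\log n$. Your structural fact (b) --- that every maximal clique of $G$ has size at least $(1-o(1))\log_{1/p} n$ --- is exactly what is needed here: no clique contained in the extra colour class is large enough to be maximal in $G$. Total: $k+2=(1/2+o(1))\log_{1/(1-p)} n$ colours.

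So the missing insight is not a delicate thinning or pairing argument; it is to truncate the greedy set halfway and absorb the entire undominated remainder into one colour, using fact (b) once.
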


\begin{proof}
First, let us start with a few simple properties of $\G(n,p)$.
\medskip 

\noindent \emph{Claim 1} Whp there is no clique of size $k = \lceil 2 \log_{1/p} n + 1 \rceil \sim 2 \log_{1/p} n$. 
\smallskip 

Indeed, noting that $k \gg 1$ we get that the expected number of cliques of size $k$ is
\begin{eqnarray*}
{ n \choose k } p^{k \choose 2} \le \left( \frac {ne}{k} p^{(k-1)/2} \right)^k \le \left( \frac {e}{k} \right)^k = o(1).
\end{eqnarray*}
The claim holds by Markov's inequality.
\medskip 

\noindent \emph{Claim 2} Whp there is no maximal clique of size at most $k = \lfloor \log_{1/p} n - 3 \log_{1/p} \log n \rfloor \sim \log_{1/p} n.$
\smallskip 

Indeed, the expected number of sets of size $k$ (not necessarily inducing cliques) for which there is no vertex adjacent to all of them is
\begin{eqnarray*}
    {n \choose k } (1-p^k)^{n-k} 
  & \leq & 
    \left( \frac {ne}{k} \right)^k \exp \left( -p^k (n-k) \right) \\
  &=& 
    O \left( \exp \left( k \log n - \log^3 n \right) \right) = o(1).
\end{eqnarray*}
As before, the claim holds by Markov's inequality.
\medskip 

The rest of the proof uses ideas similar to the ones used in the proofs of Lemmas~\ref{lem:deterministic_upperbound} and~\ref{lem.upperbound_sparse_gnp}. Let 
$$
\gamma = \frac { 2 \log_{1/p} \log n}{\log_{1/p} n}
$$
and note that $\gamma \to 0$ as $n \to \infty$. As before, we fix an arbitrary order of the vertices and construct an independent set $A$ greedily, but we stop once its size is equal to $k = \lceil (1/2+\gamma) \log_{1/(1-p)} n \rceil$. As before, we use $k+1$ colours to create colour classes that do not contain any maximal cliques.
Let $N$ be the random number of vertices not in $A$ and not dominated by $A$ (and so not coloured yet).  Then $N$ is stochastically at most $\Bin(n, (1-p)^k)$, and so by~(\ref{eq:chern1}) \whp\ $N \leq N_0$, where $N_0 = \lfloor 2 n(1-p)^k \rfloor \leq 2n^{\frac12 - \gamma}$. 
We will show that \whp\ one additional colour is enough to finish the job. We may assume that $N \leq N_0$.  As this part of the random graph is not exposed yet, this subgraph is distributed as a subgraph of $\mathcal{G}(N_0,p)$ and Claim~1 implies that \whp\ the maximum size of a clique in the additional colour is at most 
\begin{eqnarray*}
    \lceil 2 \log_{1/p} N_0 \rceil 
  & \le & 
    (1- 2 \gamma) \log_{1/p} n + 2\log_{1/p} 2 + 1 \\
  &=&
    \log_{1/p} n - 4 \log_{1/p} \log n + 2\log_{1/p} 2 + 1\\
   & < &
    \lfloor \log_{1/p} n - 3 \log_{1/p} \log n \rfloor.
\end{eqnarray*}
On the other hand, Claim~2 implies that \whp\ no clique of such size is maximal in $G$. The proofs of the upper bounds of Theorem~\ref{thm:Gnp} are finished.
\end{proof}


\section{Lower bounds}

Given a graph $G$, we say that a set $S$ of vertices is \emph{maximal-clique-free} if it contains no clique of size at least 2 which is a maximal clique in $G$.  We let $\mcf(G)$ denote the maximum size of a maximal-clique-free set of vertices.  Observe that if $G$ has $n$ vertices then
\[ \chi_c(G) \geq \frac{n}{\mcf(G)}.\]
Our lower bounds on $\chi_c(G)$ are usually obtained from upper bounds on $\mcf(G)$ by using this inequality.

\subsection{Lower bound for part (a)}

Let us first consider very sparse graphs.  Observe that for any graph $H$ with $n$ vertices and at most $t$ vertices in triangles, we have $\chi_c(H) \geq \frac{n-t}{\alpha(H)}$.  Let $G \in \G(n,p)$. If $pn \to \infty$ and $p=o(1)$, then $\chi(G) \sim \frac{n}{\alpha(G)} \sim \frac{pn}{2\log (pn)}$ \whp\ (see, for example,~\cite{L,M}).  For  $pn = o(n^{1/3})$ the expected number of triangles is $\binom{n}{3} p^3=o(n)$, so \whp\ the number of vertices in triangles is $o(n)$.  Thus for $pn\to \infty$ with $pn = o(n^{1/3})$
\begin{equation} \label{eqn.sparselb}
\chi_c(G) \geq (1+o(1)) \frac{pn}{2\log (pn)} \mbox{ \whp},
\end{equation}
and for such values of $pn$ the lower bound of part (a) is proven.

For the remaining range of $pn$ to be considered in part (a) we may therefore clearly assume $pn > \log n + \omega$. (In fact, we may assume $p \geq n^{1/3}/\omega)$ for some function $\omega=\omega(n) \to \infty$ as $n \to \infty$ but it would not simplify the argument.) 
Let $G \in \G(n,p)$, $\omega=\omega(n)$ be a function tending to infinity with $n$ arbitrarily slowly, and let
$$
c = c(n) = \frac{9 \log n}{\log (e/(np^2))}.
$$
(Observe that $9 \leq c \leq 9 \log n$.)

\smallskip

We will start with the following elementary observation. 
\begin{lemma}\label{lem:triangles}
Let $\omega, c$ be defined as above, and let  $\log n + \omega < pn \le \sqrt{n}$. Let $G \in \G(n,p)$. 
Then \whp\ no edge of $G$ belongs to more than $c$ triangles.
\end{lemma}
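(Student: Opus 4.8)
The statement asserts that whp no edge of $G \in \G(n,p)$ lies in more than $c = c(n) = 9\log n / \log(e/(np^2))$ triangles, in the regime $\log n + \omega < pn \le \sqrt n$. The natural approach is a first-moment (union bound) argument: fix an edge $\{u,v\}$ and a set $T$ of $c$ other vertices, and bound the probability that all of $u,v$ and every vertex of $T$ form triangles on $\{u,v\}$, i.e. that $uv \in E$ and each $w \in T$ is adjacent to both $u$ and $v$. That probability is $p \cdot (p^2)^{c} = p^{2c+1}$. Summing over the at most $\binom n2$ choices of edge and $\binom{n-2}{c} \le (en/c)^c$ choices of $T$, the expected number of edges in at least $c$ triangles is at most
\[
\binom n2 \binom{n-2}{c} p^{2c+1} \;\le\; n^2 p \left( \frac{en p^2}{c} \right)^{c}.
\]
The plan is to show this tends to $0$ and then invoke Markov's inequality, exactly as in the first-moment arguments already used in the paper (e.g.\ Claims~1 and~2 in the proof of Theorem in part (h)).

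The main work is the routine-but-careful estimate that $n^2 p \,(enp^2/c)^{c} = o(1)$ with the given value of $c$. Taking logarithms, this amounts to checking that
\[
2\log n + \log p + c\bigl(1 + \log(np^2) - \log c\bigr) \;\to\; -\infty.
\]
Since $pn \le \sqrt n$ we have $np^2 \le 1$, so $\log(np^2) \le 0$; thus $1 + \log(np^2) = 1 - \log(1/(np^2)) = -\log(e^{-1}/(np^2))\cdot(-1)$—more cleanly, $1 + \log(np^2) - \log c \le -\log(e/(np^2)) - \log c \le -\log(e/(np^2))$ once $c \ge 1$. Hence the term $c\bigl(1+\log(np^2)-\log c\bigr) \le -c \log(e/(np^2)) = -9\log n$ by the very definition of $c$. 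Meanwhile $2\log n + \log p \le 2\log n$, so the whole expression is at most $2\log n - 9\log n = -7\log n \to -\infty$, giving the expected count $\le n^{-7+o(1)} = o(1)$. (One should also note $c \le 9\log n$ and $c \ge 9$, as the excerpt already records, to be sure $c$ is a legitimate—well, one should really use $\lceil c\rceil$ throughout, which only helps—and that $\log c = O(\log\log n)$ is negligible against $\log n$; the crude bound above already absorbs it.)

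The only genuine subtlety — hence the step I'd flag as the main obstacle — is making sure the argument is valid across the \emph{entire} claimed range of $p$, in particular near the lower endpoint $pn = \log n + \omega$ where $np^2$ can be as small as roughly $(\log n)^2/n$, so that $\log(e/(np^2))$ is of order $\log n$ and $c$ is bounded (close to $9$); one must confirm the inequality $1 + \log(np^2) - \log c \le -\log(e/(np^2))$ still holds there, i.e.\ that $\log c \ge 0$, which is immediate since $c \ge 9 > 1$. At the other endpoint $pn = \sqrt n$ we have $np^2 = 1$, $\log(e/(np^2)) = 1$, and $c = 9\log n$, so $c\log c$ is of order $\log n \log\log n$, still dominated; the bound $n^2 p (enp^2/c)^c \le n^{5/2}(e/c)^{c}$ with $c = 9\log n$ decays super-polynomially, so this is comfortable. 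Thus the single estimate above covers the whole range, and the lemma follows. If one wishes to be slightly more careful, replace $c$ by $\lceil c \rceil$ everywhere (which is what is ultimately needed, since the number of triangles is an integer), noting this only strengthens the bound since $x \mapsto (enp^2/x)^x$ is decreasing for $x \ge enp^2$ and here $c \ge 9 \ge e \ge enp^2$.
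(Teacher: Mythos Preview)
Your approach is essentially the paper's own: a first-moment bound over all pairs $u,v$ on the probability of having at least $c$ common neighbours, using $\binom{n}{\lceil c\rceil}(p^2)^{\lceil c\rceil} \le (enp^2/c)^{\lceil c\rceil}$ and the definition of $c$ to get a bound of $n^{-9}$ per pair, then a union bound over $O(n^2)$ pairs. (The paper does not bother with the extra factor $p$ for the edge $uv$; it just bounds co-degree over all pairs, which is marginally cleaner but makes no real difference.)

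One small slip to fix: your displayed chain
\[
1+\log(np^2)-\log c \;\le\; -\log\!\bigl(e/(np^2)\bigr)-\log c \;\le\; -\log\!\bigl(e/(np^2)\bigr)
\]
is wrong in the first step, since $-\log(e/(np^2))=\log(np^2)-1$, so that step asserts $1\le -1$. What you actually need (and later restate) is
\[
1+\log(np^2)-\log c \;\le\; -\log\!\bigl(e/(np^2)\bigr),
\]
which is equivalent to $\log c\ge 2$, i.e.\ $c\ge e^2$, not merely $c\ge 1$ as you write. This is exactly why the paper records $c\ge 9\ (>e^2)$: with that, the inequality holds and your conclusion $c\bigl(1+\log(np^2)-\log c\bigr)\le -9\log n$ is valid. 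After this correction the argument is complete and matches the paper's proof.
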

\begin{proof}
Consider a pair $u,v$ of distinct vertices. The number of common neighbours of $u$ and $v$ is a random variable $X$ with distribution $\Bin(n-2,p^2)$, which is stochastically bounded from above by $\Bin(n,p^2)$. 
We first observe that 
$$
\Prob( X \geq c ) \le {n \choose \lceil c \rceil} \left( p^2 \right)^{\lceil c \rceil} 
\le \left( \frac{en}{c} \cdot p^2 \right)^{\lceil c \rceil}  
\leq \left( \frac {np^2}{e} \right)^{c},
$$
since $c \geq 9 \geq e^2$ and $np^2/e \leq 1/e \leq 1$.
Hence,
$$
\Prob( X \ge c ) \le \exp \left( - c \log \left( \frac {e}{np^2} \right) \right) = n^{-9} = o(n^{-2}).
$$
Since there are $O(n^2)$ pairs to consider, the result holds by a union bound. 
\end{proof}

The next lemma is less straightforward. 

\begin{lemma}\label{lem:lower_bound1}
Let $\omega, c$ be defined as above. Suppose that $\log n + \omega <  pn \le n^{1/2-\omega/\sqrt{\log n}}$. Let 
$$
\eps = \eps(n) = 3 \left(2ecnp^2 \right)^{1/(2c)}
$$
and $\eps' = \eps'(n) = \max \{ \eps, 1/\log \log (pn) \}$.
Then $\eps'=o(1).$ Moreover, let $G=(V,E) \in \G(n,p)$. Then, \whp\ every set $K \subseteq V$ of size 
$$
k = (2+\eps') \frac {\log (pn)}{p} \sim \frac {2 \log(pn)}{p}
$$
has the following properties:
\begin{itemize}
\item [(a)] the graph induced by $K$, $G[K]$, has at least one edge, 
\item [(b)] there is no set of edge-disjoint triangles, each with one vertex in $V \setminus K$, which contains at least a $1/(2c)$-fraction of the edges in $G[K]$. 
\end{itemize}
\end{lemma}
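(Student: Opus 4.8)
The plan is to prove (a) and (b) separately, both by a first-moment (union bound) argument over all $\binom{n}{k}$ candidate sets $K$ of the given size $k = (2+\eps')\log(pn)/p$. That $\eps'=o(1)$ is immediate: $1/\log\log(pn)\to 0$ since $pn\to\infty$, and $\eps = 3(2ecnp^2)^{1/(2c)}$ with $c=9\log n/\log(e/(np^2))$ is handled by noting $(2ecnp^2)^{1/(2c)} = \exp\big(\frac{1}{2c}\log(2ecnp^2)\big)$; since $np^2 \leq n^{-2\omega/\sqrt{\log n}}\cdot(\text{polylog})$ one has $\log(e/(np^2)) = (2\omega/\sqrt{\log n}+o(1))\log n = \Theta(\omega\sqrt{\log n})$, so $c = \Theta(\sqrt{\log n}/\omega)$, and the exponent $\frac{1}{2c}\log(2ecnp^2) \to -\infty$ (the $\log(np^2)$ term dominates), giving $\eps\to 0$. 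I would spell this out carefully since the precise rate feeds into later parts. Part (a) is the easy half: the probability that a fixed $k$-set $K$ spans no edge is $(1-p)^{\binom{k}{2}} \leq \exp(-p\binom{k}{2}) = \exp(-(1+o(1))\tfrac{k^2p}{2})$, and since $k \sim 2\log(pn)/p$ we get $\tfrac{k^2p}{2} \sim 2\log^2(pn)/p \gg k\log n$ in the relevant range (using $p \leq n^{-1/2+o(1)}$, so $\log n = O(\log(pn)/\text{something})$... here one must check $2\log^2(pn)/p$ beats $k\log n = (2+o(1))\log(pn)\log n/p$, i.e. that $\log(pn)\gg\log n$, which fails!). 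So for (a) the union bound over $\binom{n}{k}\le \exp(k\log n)$ sets needs more care; the correct observation is that a $k$-set with no edge is an independent set, and \whp\ $\alpha(G)\sim 2\log(pn)/p < k$ for $pn$ in this range (standard, cf.\ the references \cite{L,M} already invoked), so (a) follows from a known bound on the independence number rather than a crude first moment — I would cite this.

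For part (b), fix a $k$-set $K$ and condition on $G[K]$ having $m$ edges; by (a) and concentration, \whp\ $m = \Theta(k^2p) = \Theta(\log^2(pn)/p)$ for \emph{every} $K$ simultaneously (another routine union bound via Chernoff~(\ref{eq:chern1}),~(\ref{eq:chern2})). A collection of edge-disjoint triangles each having exactly one vertex outside $K$ and covering $\ge m/(2c)$ edges of $G[K]$ uses $\ge m/(4c)$ such triangles (each covers at most two $K$-edges... actually each such triangle has two vertices in $K$, hence covers exactly one edge of $G[K]$, so it uses $\ge m/(2c)$ triangles); call this number $r \ge m/(2c)$. Such a configuration is determined by choosing $r$ vertices in $V\setminus K$ and $r$ disjoint edges in $G[K]$, and requires $3r$ additional ``outside'' edges to be present (two legs per triangle plus... no — the $K$-edge is already counted in $m$; each triangle needs its two legs to $V\setminus K$ present, so $2r$ outside edges). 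I would bound the number of such configurations by $\binom{n}{r}\cdot (\text{ways to pick } r \text{ disjoint edges among the } m) \le n^r \cdot m^r / r!$ (really $\le n^r \binom{m}{r}$) and multiply by $p^{2r}$ for the legs, then sum over $r$ from $m/(2c)$ up. The per-$K$ failure probability is then at most $\sum_{r\ge m/(2c)} \binom{m}{r} n^r p^{2r} \le \sum_r \big(\tfrac{em\,np^2}{r}\big)^r \le \sum_r (2ec\,np^2)^r$ using $r \ge m/(2c)$ so $m/r \le 2c$; this is a geometric-type sum dominated by its first term $(2ec\,np^2)^{m/(2c)}$ provided $2ec\,np^2 < 1$ (true since $np^2 \le 1/e$ and we can absorb the constant, possibly tightening $c$). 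Finally multiply by $\binom{n}{k} \le \exp(k\log n)$; we need $\tfrac{m}{2c}\log\tfrac{1}{2ecnp^2} > k\log n$, i.e.\ $\tfrac{m}{2c} > \tfrac{k\log n}{\log(1/(2ecnp^2))}$. Since $m = \Theta(\log^2(pn)/p)$, $k=\Theta(\log(pn)/p)$, $c = \Theta(\sqrt{\log n}/\omega)$, and $\log(1/(2ecnp^2)) = \Theta(\omega\sqrt{\log n})$, the LHS is $\Theta(\omega\log^2(pn)/(p\sqrt{\log n}))$ and the RHS is $\Theta(\log(pn)\sqrt{\log n}/(p\omega))$; the ratio LHS/RHS is $\Theta(\omega^2\log(pn)/\log n)$, which tends to infinity precisely because $pn \le n^{1/2-\omega/\sqrt{\log n}}$ forces $\log(pn) \le (\tfrac12-\omega/\sqrt{\log n})\log n$, hmm, that makes $\log(pn)/\log n$ bounded below $1/2$, not large — so actually $\Theta(\omega^2 \cdot \Theta(1))\to\infty$, which is fine. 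This is exactly where the definition $\eps = 3(2ecnp^2)^{1/(2c)}$ and the ``$+\eps'$'' slack in $k$ enter: the factor $(2+\eps')$ rather than $2$ gives the extra room so that the first-moment bound closes with the right constant, and tracking that the sum over $r$ converges and the $\eps'$ correction is chosen large enough is the delicate bookkeeping.

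The main obstacle I anticipate is the combinatorial counting in (b): correctly identifying the number of legal triangle-configurations (edge-disjointness constraints, the one-vertex-outside constraint, not double-counting the $K$-edges) and getting a bound clean enough that the sum over the number $r$ of triangles is geometric and dominated by its smallest admissible term. The second subtlety is calibrating $\eps'$ so that the exponent $\tfrac{m}{2c}\log\tfrac{1}{2ecnp^2}$ genuinely beats $k\log n = (2+\eps')\tfrac{\log(pn)}{p}\log n$ with room to spare across the whole range $\log n + \omega < pn \le n^{1/2-\omega/\sqrt{\log n}}$; near the upper end of this range $np^2$ is only barely $o(1)$ (it is $n^{-\Theta(\omega/\sqrt{\log n})}$), so $c$ is large, $\log(1/(np^2))$ is small, and the margins are thin — which is why $c$ was defined with the generous constant $9$ and why $\eps'$ carries the extra $1/\log\log(pn)$ floor. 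Everything else — the Chernoff estimates for $m$, the independence-number input for (a), the claim $\eps'=o(1)$ — is routine given the lemmas already available in Section~2.
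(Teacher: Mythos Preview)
Your approach to part (a) is actually salvageable as a direct first-moment argument, and you abandoned it prematurely. The mistake is bounding $\binom{n}{k}$ by $\exp(k\log n)$; use $\binom{n}{k}\le (en/k)^k$ instead. Then $\log(en/k)=\log(pn)-\log\log(pn)+O(1)$, and the exponent becomes
\[
k\Big(\log(pn)-\log\log(pn)+O(1)-\tfrac{2+\eps'}{2}\log(pn)\Big)
=k\Big(-\tfrac{\eps'}{2}\log(pn)-\log\log(pn)+O(1)\Big),
\]
which goes to $-\infty$ precisely because of the $\eps'$ slack (recall $\eps'\ge 1/\log\log(pn)$, so $\tfrac{\eps'}{2}\log(pn)\to\infty$). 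This is exactly how the paper does it. Your fallback to the independence number is also correct, just less self-contained.

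The real gap is in part (b). You claim that \whp\ \emph{every} $k$-set $K$ has $m=\Theta(k^2p)$ edges, via Chernoff plus a union bound over $\binom{n}{k}$ sets. This is false: the union bound does not close, and the statement itself fails. Take any independent set $I$ of size $\alpha(G)\sim 2\log(pn)/p$ and add $\sim\eps'\log(pn)/p$ arbitrary vertices to reach size $k$; the resulting $K$ has only $O(\eps' k^2p)=o(k^2p)$ edges. So there \emph{are} $k$-sets with atypically small $m$, and for those your bound $(2ecnp^2)^{m/(2c)}$ is far too weak to beat $\binom{n}{k}$. The paper avoids this by never fixing $m$: it sums over all $m$ and keeps the factor $\binom{\binom{k}{2}}{m}p^m(1-p)^{\binom{k}{2}-m}$, which is itself small when $m$ is atypical. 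The resulting expression $p_m\le\exp(-kg_m)$ then needs a short case analysis in $m$ (small, near a threshold $m'$, large) to show $g_m\ge 1$ uniformly. Your counting for the triangle configurations, $\binom{m}{r}(np^2)^r$ with $r\ge m/(2c)$, is essentially the same as the paper's; it is only the handling of $m$ that breaks your argument.

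A minor point: your asymptotics $c=\Theta(\sqrt{\log n}/\omega)$ and $\log(1/(np^2))=\Theta(\omega\sqrt{\log n})$ are valid only near the upper end $pn=n^{1/2-\omega/\sqrt{\log n}}$; near the lower end $c=\Theta(1)$. Your final ratio check therefore does not cover the full range, which is another reason the paper's uniform-in-$m$ calculation is the right route.
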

\begin{proof}
First, recall that $c = (9 \log n)/(\log (e/(np^2))$ and so
$$
\eps = \eps(n) = 3 \left(2ecnp^2\right)^{1/(2c)} = 3 \exp \left( - \frac {\log(e/(np^2))}{18 \log n} \log \left( \frac {n \log (e/(np^2))}{18 e \, np^2 \, \log n} \right) \right)
$$
is an increasing function in $p$ (if we think of $p$ as a variable). Hence, writing $\tilde{\eps}$ as the value of $\eps$ when $p$ attains its maximum value $n^{-\frac12 - \omega/\sqrt{\log n}}$,
\begin{eqnarray*}
0\; < \; \eps \; \le \; \tilde{\eps}
&=& 3 \exp \left( - (1-o(1)) \frac {\omega}{9 \sqrt{\log n}} ( 2 \omega \sqrt{\log n}- O(\log \log n)) \right) \\
&=& 3 \exp ( - (2/9-o(1)) \omega^2 ) \; = \; o(1),
\end{eqnarray*}
and so also $\eps' = o(1)$, and the first assertion follows. On the other hand, it follows immediately from the definition of $\eps'$ that $\eps' \ge 1/\log \log(pn) \gg 1/\log(pn)$.

For the second assertion, part (a) is standard. The probability $p_0$ that the desired property is not satisfied can be estimated as follows: 
\begin{eqnarray*}
p_0 &\le& {n \choose k}  (1-p)^{k \choose 2} \le \left( \frac {en}{k} \right)^k \exp \left( - p { k \choose 2} \right) \\
&=& \exp \left( k \left( \log (en/k) - pk/2 + p/2 \right) \right) \\
&\le& \exp \left( k \left( \log (pn) + O(1) - \log \log (pn) - \frac {2+\eps'}{2} \log(pn) \right) \right) \\
& \le & \exp (-k) = o(1),
\end{eqnarray*}
where the second last step holds for $n$ sufficiently large.

For part (b), for a given $m$ such that $1 \le m \le {k \choose 2} = O(n^2)$, we are going to estimate the probability $p_m$ that there exists a set $K$ of size $k$ with precisely $m$ edges in $G[K]$ and with at least $\lceil m/(2c) \rceil$ edges that belong to edge-disjoint triangles with the third vertex in $V \setminus K$. We have 
$$
p_m \le {n \choose k} { {k \choose 2} \choose m} p^m (1-p)^{ {k \choose 2} - m} {m \choose \lceil m/(2c) \rceil} (np^2)^{\lceil m/(2c) \rceil} \le \exp(f_m),
$$
where 
\begin{eqnarray*}
f_m &=& k \left( \log \left( \frac {en}{k} \right) - \frac {p(k-1)}{2} \right) + m \left\{ \log \left[ \left( \frac {ek^2p}{2m} \right) \left(2ec \,n p^2 \right)^{1/(2c)} \right] + p \right\} \\
&=& k \left( \log (pn) + O(1) - \log \log (pn) - \frac {2+\eps'}{2} \log(pn) \right) \\
&& + \ \ m \left\{ \log \left[ \left( \frac {e(2+\eps')^2 \log^2 (pn)}{2mp} \right) \left( \frac {2ec(pn)^2}{n} \right)^{1/(2c)} \right] + p \right\}.
\end{eqnarray*}
Hence, $p_m \le \exp(-kg_m)$, where
\begin{eqnarray*}
g_m &=& \frac {\eps'}{2} \log(pn) - \frac{mp}{(2+\eps') \log(pn)} \left\{ \log \left[ \left( \frac {e(2+\eps')^2 \log^2 (pn)}{2mp} \right) \left( \frac {2ec(pn)^2}{n} \right)^{1/(2c)} \right] + p \right\}.
\end{eqnarray*}
Let 
$$
m' = \left( \frac {e(2+\eps')^2 \log^2 (pn)}{2p} \right) \left( \frac {2ecd^2}{n} \right)^{1/(2c)} = o \left( \frac {\log^2(pn)}{p} \right).
$$
First, note that if $m > em'$, then 
$$
g_m \ge \frac {\eps'}{2} \log(pn) - \frac{mp}{(2+\eps') \log(pn)} ( -1 + p ) \ge 1.
$$
For $m' \le m \le em'$, we get 
$$
g_m \ge \frac {\eps'}{2} \log(pn) - O \left( \frac{m'p^2}{\log(pn)} \right) = \frac {\eps'}{2} \log(pn) - o(1) \ge 1.
$$
Finally, for $m = m' / x$ for any $x = x(n) \ge 1$, since $\max_{x \ge 1} (\log x / x) = 1/e$, we get
\begin{eqnarray*}
g_m &\ge& \frac {\eps'}{2} \log(pn) - \frac{m'p/e}{(2+\eps') \log(pn)} - o(1) \\
&=& \frac {\eps'}{2} \log(pn) - \left( \frac {(2+\eps') \log (pn)}{2} \right) \left( \frac {2ec(pn)^2}{n} \right)^{1/(2c)} - o(1) \\
&=& \log(pn) \left[ \frac {\eps'}{2} - (1+o(1)) \left( \frac {2ec(pn)^2}{n} \right)^{1/(2c)} \right] - o(1) \ge 1.
\end{eqnarray*}
As a result, $p_m \le \exp(-k) = o(n^2)$ and so $\sum_{m=0}^{k \choose 2} p_m = o(1)$. The result holds.
\end{proof}

\bigskip

The last two lemmas give us easily the following lower bound on $\chi_c$ that matches (asymptotically) the upper bound. This proves part (a) of Theorem~\ref{thm:Gnp}. 
\begin{theorem}
Let $\omega$ be a function tending to infinity with $n$ arbitrarily slowly.
Suppose that $p=p(n)$ is such that 
$
\omega < pn \le n^{1/2-\omega/\sqrt{\log n}}.
$
Let $G = (V,E) \in \G(n,p)$. Then, \whp\ $\mcf(G) \leq (2+o(1)) \log (pn)/p$, and so
$$
\chi_c(G) \ge (1+o(1)) \frac {pn}{2 \log (pn)} \sim \chi(\G(n,p)).
$$
\end{theorem}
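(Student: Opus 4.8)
The plan is to establish the stated bound on $\mcf(G)$ and then read off the bound on $\chi_c(G)$ from $\chi_c(G)\ge n/\mcf(G)$ together with the fact that $\chi(\G(n,p))\sim pn/(2\log(pn))$ whp. First I would dispose of the sub-range $\omega<pn\le\log n+\omega$, where $pn=o(n^{1/3})$ and the conclusion is exactly the content of the discussion leading to~\eqref{eqn.sparselb}. For the remaining range $\log n+\omega<pn\le n^{1/2-\omega/\sqrt{\log n}}$ I would condition on the whp event $\cE$ that: (i)~no edge of $G$ lies in more than $c$ triangles (Lemma~\ref{lem:triangles}, using $pn\le\sqrt n$); (ii)~every $k$-set of vertices satisfies properties~(a) and~(b) of Lemma~\ref{lem:lower_bound1}; and (iii)~$G$ has no $K_5$ --- the last because $\binom n5 p^{10}\le n^{-10\omega/\sqrt{\log n}}=o(1)$. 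On $\cE$ everything that follows is deterministic.

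Fix a $k$-set $K$ and suppose for contradiction that $K$ is maximal-clique-free; by~(a), $G[K]$ has at least one edge, and I write $E_K$ for its edge set and $m=|E_K|\ge1$. The first step is to show that \emph{every} edge $uv\in E_K$ has a common neighbour in $V\setminus K$: were this false, then $\{u,v\}$, not being a maximal clique, would have a common neighbour, necessarily in $K$, hence extend to a triangle inside $K$; that triangle is again not maximal, any vertex completing it is again a common neighbour of $u$ and $v$ and so again lies in $K$, and iterating one builds a $K_5$ inside $K\subseteq V$, contradicting~(iii).

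The second step turns this into the configuration forbidden by~(b). For each $e=uv\in E_K$ I pick a common neighbour $\phi(e)\in V\setminus K$ and let $T_e$ be the triangle on $\{u,v,\phi(e)\}$, whose only edge lying inside $G[K]$ is $e$. A short case analysis shows that for $e\ne e'$ the triangles $T_e$ and $T_{e'}$ share an edge if and only if $\phi(e)=\phi(e')$ and $e\cap e'\ne\emptyset$; let $H$ be the graph on vertex set $E_K$ recording exactly these conflicts, so that an independent set of $H$ corresponds to pairwise edge-disjoint triangles, each with a vertex outside $K$. If $e=uv$ and $w=\phi(e)$, then a neighbour of $e$ in $H$ has the form $ux$ or $vx$ where (say in the first case) $x$ is a common neighbour of $u$ and $w$ distinct from $v$; since $v$ is itself such a common neighbour and the edge $uw$ lies in at most $c$ triangles by~(i), there are at most $c-1$ choices, so $\Delta(H)\le2(c-1)$. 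Greedily, $H$ then has an independent set of size at least $\lceil m/(2c-1)\rceil\ge\lceil m/(2c)\rceil$, yielding pairwise edge-disjoint triangles, each with a vertex in $V\setminus K$, that together cover at least a $1/(2c)$-fraction of the edges of $G[K]$ --- contradicting~(b). Hence on $\cE$ no $k$-set is maximal-clique-free, so $\mcf(G)\le k-1=(2+o(1))\log(pn)/p$ (recall $\eps'=o(1)$), and therefore $\chi_c(G)\ge n/\mcf(G)\ge(1+o(1))\,pn/(2\log(pn))\sim\chi(\G(n,p))$.

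The hard part is the combinatorial core, namely the two steps above: recognising that ``no common neighbour outside $K$'' would force an impossibly large clique (so that all of $E_K$ is usable), and then building the conflict graph $H$ and extracting the sharp bound $\Delta(H)\le2c-2$ from Lemma~\ref{lem:triangles}. The precise constant matters, since it must beat the $1/(2c)$ threshold appearing in property~(b); this is exactly why Lemma~\ref{lem:lower_bound1} was calibrated to that fraction and why Lemma~\ref{lem:triangles} bounds the number of triangles through each edge. The sub-range reduction, the first-moment bound ruling out $K_5$, and the greedy independent set are all routine.
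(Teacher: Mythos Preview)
Your proof is correct and follows the paper's approach: condition on Lemmas~\ref{lem:triangles} and~\ref{lem:lower_bound1}, show every edge of $G[K]$ extends to a triangle with third vertex outside $K$, then greedily select edge-disjoint such triangles to contradict property~(b). The conflict-graph analysis in your second step is exactly the content of the paper's one-line greedy argument, just made explicit.

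The only difference is your first step. You introduce an extra whp condition (no $K_5$) and argue that if some edge $uv$ had no common neighbour outside $K$, then iterating the MCF property would build a $K_5$ inside $K$. The paper avoids this entirely: for any edge $uv$ in $K$, take a \emph{maximal} clique $C$ containing $\{u,v\}$; since $K$ is maximal-clique-free, $C$ has a vertex $w\notin K$, and $w$ (being in a clique with $u$ and $v$) is the desired common neighbour. So condition~(iii) and the iteration are unnecessary, though harmless.
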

\begin{proof}
By~(\ref{eqn.sparselb}), we may assume that $pn - \log n \to \infty$ (or indeed $pn = \Omega(n^{1/3}/\omega)$ for some $\omega = \omega(n) \to \infty$). Let $k \sim 2 \log(pn)/p$ be defined as in Lemma~\ref{lem:lower_bound1}. As we aim for a statement that holds \whp, we may assume that all properties in Lemma~\ref{lem:lower_bound1} and Lemma~\ref{lem:triangles} hold deterministically. Consider any colouring that yields $\chi_c(G)$. We will show that no colour class has size at least $k$ which will finish the proof.  For a contradiction, suppose that the vertices of some set $K$ of size $k$ are of the same colour. It follows from Lemma~\ref{lem:lower_bound1}(a) that $G[K]$ contains at least one edge. As no maximal clique is monochromatic, each edge of $G[K]$ belongs to a triangle with the third vertex in $V \setminus K$. As, by Lemma~\ref{lem:triangles}, no edge belongs to more than $c$ triangles, we may greedily pick a set of edge-disjoint such triangles containing at least a $1/(2c)$-fraction of the edges in $G[K]$.  But this contradicts Lemma~\ref{lem:lower_bound1}(b). 
\end{proof}

\subsection{Lower bounds for parts (b) and (c)}

We start with the following observation. We will apply the result for $k=3$ but we state it in a more general case as the proof of this generalization is exactly the same.

\begin{lemma}\label{lem:lowerboundgeneral}
Let $k \geq 3$ be  a fixed integer, and let $\varepsilon \in (0,2/k)$ be a fixed arbitrarily small positive constant.  Suppose that $p=p(n)$ satisfies 
$$
(3k)^{ \frac{2}{k}} (\log n)^{\frac {2}{k(k-1)}} n^{1-\frac {2}{k}}  \leq pn \leq n^{1-\varepsilon},
$$
and let $s = s(n) := 3k p^{-k/2} \left(\log n \right)^{1/(k-1)}$.  Let $G=(V,E) \in \G(n,p)$. Then, \whp\ the following property holds: every set $S \subseteq V$ of $\lceil s \rceil$ vertices contains at least $\frac12 \binom{s}{k} p^{\binom{k}{2}} = \Theta\left(s^k p^{\binom{k}{2}}\right)$ copies of $K_k$.
\end{lemma}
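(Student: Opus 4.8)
The plan is to prove Lemma~\ref{lem:lowerboundgeneral} by fixing one set $S$ of $\lceil s\rceil$ vertices, applying Janson's inequality (Lemma~\ref{thm:Janson}) to the count $X$ of copies of $K_k$ inside $S$ to show $\Prob(X \le \tfrac12\mu)$ is smaller than $\binom{n}{\lceil s\rceil}^{-1}$, and then taking a union bound over all $\binom{n}{\lceil s\rceil}$ choices of $S$. Here $\mu = \E X = \binom{\lceil s\rceil}{k} p^{\binom{k}{2}} = \Theta\left(s^k p^{\binom{k}{2}}\right)$, using that $k$ is a constant and $s \to \infty$ (the latter because $pn \le n^{1-\varepsilon}$ forces $p \le n^{-\varepsilon}$, so $p^{-k/2} \ge n^{\varepsilon k/2}$ is a positive power of $n$, hence $s$ is too).

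First I would set up the Janson parameters. With $t = \mu/2$ we have $\varphi(-t/\mu) = \varphi(-1/2) = \tfrac12\log\tfrac12 + \tfrac12 = \tfrac12(1-\log 2)$, a positive constant, so the bound reads $\Prob(X \le \mu/2) \le \exp\left(-c_k \mu^2/\overline{\Delta}\right)$ for a constant $c_k>0$. The crux is therefore to estimate $\overline{\Delta} = \sum_{|A\cap B|\ge 2} \E[\mathcal I_A \mathcal I_B]$. Splitting according to $j = |A\cap B| \in \{2,\dots,k\}$ (the $j=k$ term being just $\mu$), the standard computation gives, for each $j$, a contribution of order $s^{2k-j} p^{2\binom{k}{2} - \binom{j}{2}}$ up to constants depending only on $k$. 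Factoring out $\mu^2 \asymp s^{2k}p^{2\binom{k}{2}}$, the $j$-th term of $\overline{\Delta}/\mu^2$ is of order $s^{-j} p^{-\binom{j}{2}} = \left(s^2 p^{j-1}\right)^{-j/2}$, which is maximized over $j\in\{2,\dots,k\}$ at $j=2$ (since $s^2 p^{j-1}$ is increasing in... one must check: $s^2 p^{j-1}$ with $s^2 = 9k^2 p^{-k}(\log n)^{2/(k-1)}$ gives $s^2 p^{j-1} = 9k^2 p^{j-1-k}(\log n)^{2/(k-1)}$, and since $j-1-k < 0$ and $p \to 0$, this quantity is decreasing in $j$, so $\left(s^2 p^{j-1}\right)^{-j/2}$ is largest when the base is smallest and the exponent largest — one should verify the dominant term carefully, but it will be governed by $j=2$ or $j=k$). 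In any case I would bound $\overline{\Delta}/\mu^2 \le C_k \, s^{-2} p^{-1} = C_k \big/ \left(s^2 p\right)$, and by the hypothesis $pn \ge (3k)^{2/k}(\log n)^{2/(k(k-1))} n^{1-2/k}$ one checks $s^2 p = 9k^2 p^{1-k}(\log n)^{2/(k-1)} \ge 9 \log n$, so that $\mu^2/\overline{\Delta} \ge c_k' \log n$ with $c_k'$ a constant we can make as large as needed (the constant $3k$ in the definition of $s$ was chosen precisely so this works with room to spare).

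Then $\Prob(X \le \mu/2) \le \exp(-c_k c_k' \log n) = n^{-c_k c_k'}$, which for an appropriate absolute choice beats $\binom{n}{\lceil s\rceil} \le n^{\lceil s\rceil} = \exp\big(O(s \log n)\big)$ — wait, this is the subtle point: $s$ itself grows like a power of $n$, so $\binom{n}{\lceil s\rceil}$ is much larger than any fixed power of $n$, and a naive union bound fails. The resolution, and what I expect to be the main obstacle, is that one must \emph{not} fix $S$ and union-bound crudely; instead the exponent in Janson must beat $s\log n$, i.e.\ we need $\mu^2/\overline{\Delta} = \Omega(s \log n)$, not merely $\Omega(\log n)$. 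Revisiting the estimate: $\mu^2/\overline{\Delta} \ge c_k s^2 p \cdot(\text{correction})$, and $s^2 p = 9k^2 p^{1-k}(\log n)^{2/(k-1)}$, while $s\log n = 3k p^{-k/2}(\log n)^{1/(k-1)}\log n = 3kp^{-k/2}(\log n)^{k/(k-1)}$. The ratio $(s^2p)/(s\log n) = 3k\, p^{1-k/2}(\log n)^{1/(k-1)} = s\cdot p \to$ ? — since $sp = 3k p^{1-k/2}(\log n)^{1/(k-1)}$ and $p^{1-k/2}\to\infty$ for $k\ge 3$, we get $sp\to\infty$, so indeed $\mu^2/\overline\Delta \gg s\log n$, and the union bound closes. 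So the plan is: (i) compute $\mu$ and $\overline\Delta$ term-by-term in $j$; (ii) show $\mu^2/\overline\Delta \ge 2 s\log n$ for $n$ large, using the lower bound on $pn$; (iii) apply Janson and a union bound over the $\binom{n}{\lceil s\rceil}$ sets $S$. The only real care needed is tracking the constants through step (ii) and confirming $j=2$ (or whichever $j$) is genuinely the dominant term in $\overline\Delta$; everything else is routine.
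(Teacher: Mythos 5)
Your overall strategy (Janson plus a union bound that has to beat $\binom{n}{\lceil s\rceil}$, hence needing $\mu^2/\overline\Delta\gg s\log n$, not just $\gg\log n$) is exactly right and matches the paper. But the key quantitative step is wrong: you bound $\overline\Delta/\mu^2 \le C_k\,s^{-2}p^{-1}$, i.e.\ you take the $j=2$ term as the dominant one, and this is false in the whole regime of the lemma. Write $a_j := s^{-j}p^{-\binom{j}{2}}$ for the $j$-th contribution to $\overline\Delta/\mu^2$. Substituting $s = 3k\,p^{-k/2}(\log n)^{1/(k-1)}$ gives
\[
a_j \;=\; (3k)^{-j}\,p^{\,j(k-j+1)/2}\,(\log n)^{-j/(k-1)} ,
\]
so the exponent of $p$ is $j(k-j+1)/2$, a concave parabola in $j$ maximised at $j=(k+1)/2$; over $j\in\{2,\dots,k\}$ the minimum of that exponent is attained at an endpoint, and comparing $j=2$ (exponent $k-1$) with $j=k$ (exponent $k/2$) shows that for every $k\ge 3$ the exponent at $j=k$ is strictly smaller, hence $a_k \gg a_2$ since $p\to 0$. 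So $\overline\Delta$ is dominated by the $j=k$ term, which is just $\mu$ itself: in fact $\overline\Delta\sim\mu$. Your displayed inequality $\overline\Delta/\mu^2\le C_k/(s^2p)$ is therefore incorrect, because the left side is $\asymp 1/\mu$ and $1/\mu \gg 1/(s^2 p)$ (equivalently $\mu\ll s^2p$ here).

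This error is not cosmetic. With the correct estimate the Janson exponent is $\varphi(-1/2)\,\mu^2/\overline\Delta \sim \varphi(-1/2)\,\mu$, and substituting the definition of $s$ gives $\mu\sim\frac{(3k)^{k-1}}{k!}\,s\log n$, a bounded constant times $s\log n$ --- not the $s^2 p\gg s\log n$ with ``room to spare'' that you obtained. The union bound only closes because $\varphi(-1/2)\cdot(3k)^{k-1}/k!>2$ for all $k\ge 3$ (at $k=3$ this is roughly $0.153\cdot 13.5\approx 2.07$, barely above $2$); this is exactly why the constant $3k$ appears in the definition of $s$, and a smaller constant would break the argument. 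So you need to redo step (ii): identify $j=k$ as the dominant term, conclude $\overline\Delta\sim\mu$, and then verify the explicit constant. The rest of your plan (apply Lemma~\ref{thm:Janson} with $t=\mu/2$, bound $\binom{n}{\lceil s\rceil}\le\exp(\lceil s\rceil\log n)$, union bound) is exactly what the paper does.
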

\noindent (Note that the lower bound for $pn$ is there only to make sure that $s \le n$.)
\begin{proof}
For a fixed set $S \subseteq V$ of size $\lceil s \rceil$, let $X$ be the random variable counting the number of copies of $K_k$ in $S$. Clearly, $\mu = \E [X]=\binom{\lceil s \rceil}{k}p^{\binom{k}{2}}$. Using the notation of Lemma~\ref{thm:Janson}, we may deal separately with the cases $|A \cap B| = i$ with $2 \le i \le k$ to get
\begin{eqnarray*}
\overline{\Delta} &=& \sum_{i=2}^{k} \binom{\lceil s \rceil}{2k-i}\binom{2k-i}{k} \binom{k}{i} p^{2\binom{k}{2}-\binom{i}{2}} \sim \sum_{i=2}^k \frac {s^{2k-i}}{(k-i)!^2 i!} p^{k(k-1)-\binom{i}{2}}\\ 
&=& s^{2k} p^{k(k-1)} \sum_{i=2}^k \frac {s^{-i}}{(k-i)!^2 i!} p^{-\binom{i}{2}}.
\end{eqnarray*}
Let us observe that it follows from the definition of $s$ that the $i$-th term of the sum is of order $(1/p)^{-(i/2)(k-i+1)} (\log n)^{-i/(k-1)}$. Since $1/p \geq n^{\varepsilon}$, 
the logarithmic factor is negligible and it is easy to see that the sum is dominated by the last term ($i=k$). Therefore,
$$
\overline{\Delta} \sim \binom{\lceil s\rceil}{k}p^{\binom{k}{2}} = \mu.
$$
Applying Lemma~\ref{thm:Janson} with $t = \E [X]/2$ we get 
\begin{eqnarray*}
\Prob \left( X \leq \frac{\E [X]}{2} \right) &\le&  \exp \big( - (1+o(1)) \varphi(-1/2) \mu \big) \le \exp \left(-  \frac {0.15 s^k}{k!} p^{\binom{k}{2}} \right) \\
&=& \exp \left(- \frac {0.15 (3k)^{k-1} s \log n}{k!} \right) \le \exp( -2 s \log n).
\end{eqnarray*}
Taking a union bound over all $\binom{n}{\lceil s \rceil} \le \exp( \lceil s \rceil \log n)$ sets of size $\lceil s\rceil$, the desired property holds.
\end{proof}

We will also need the following property.
\begin{lemma}\label{lem:nodensegeneral}
Let $k \ge 3$ be a fixed integer, and let $\varepsilon = \eps(n)$ satisfy $(6/\log n)^{\frac{1}{k-1}} \le \eps \le 1/k$. 
Suppose $p=p(n)$ satisfies $pn \leq n^{k/(k+2)-\varepsilon}$, and let $G\in \G(n,p)$. Then \whp\ no edge in $G$ belongs to as many as $r = \lceil \eps^{-(k-1)} \rceil$ copies of $K_{k+1}$.
\end{lemma}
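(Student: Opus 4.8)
The plan is to use a straightforward first-moment (Markov) argument, but the accounting has to be done carefully to squeeze out the stated range of $p$. Fix an edge $e = \{u,v\}$. If $e$ lies in $r$ copies of $K_{k+1}$, then there exist $r$ sets $W_1, \dots, W_r$, each of size $k-1$, with $W_i \cap W_j$ possibly nonempty, such that $\{u,v\} \cup W_i$ induces a clique for every $i$. I would first bound the number of distinct vertices appearing in $\bigcup_i W_i$: if this union has $t$ vertices, then these $t$ vertices together with $u,v$ span a graph containing $r$ copies of $K_{k+1}$ through $e$, and in particular contains many edges. Rather than bounding the union size, the cleaner route is to sum over the number $j$ of copies that are pairwise vertex-disjoint outside $e$ obtained greedily: since each copy has only $k-1$ ``private'' vertices and any two copies can share at most $k-2$ of them, a greedy selection yields at least $\lceil r/(k-1)\rceil$ — actually it is cleaner still to observe that among $r$ copies one can greedily extract a subfamily of $\lceil r^{1/1}\cdots\rceil$... so instead I would just directly bound the probability that $e$ lies in $r$ copies by a union bound over all unordered $r$-tuples of $(k-1)$-sets.

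Concretely, I would estimate
\[
\Prob(\exists\, e \text{ in } \ge r \text{ copies of } K_{k+1})
\;\le\; \binom{n}{2} p \cdot \binom{\binom{n}{k-1}}{r} \cdot \left( p^{\binom{k+1}{2}-1} \right)^{?}
\]
— but the exponent of $p$ here is the subtle point, because overlapping $W_i$'s share edges, so I cannot simply multiply $r$ independent clique-probabilities. The honest fix is to condition on the set $T = \bigcup_i W_i$ of size $t$ (where $k-1 \le t \le r(k-1)$) and on how the $r$ copies sit inside $T$: given $T$, the event that $e$ extends to $r$ copies of $K_{k+1}$ inside $\{u,v\}\cup T$ is contained in the event that $G[\{u,v\}\cup T]$ contains a specific subgraph $H$ on $t+2$ vertices, and the cost of that subgraph is $p^{e(H)}$ where $e(H)$ is at least the number of edges forced. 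A subgraph-counting bound (for instance via Lemma~\ref{lem:vu}, or just a direct union bound on appearances of the densest relevant configuration) then gives, for each $t$, a contribution of roughly $n^2 \cdot n^{t} p^{1 + e(H_t)}$ where $H_t$ is the extremal configuration on $t$ outer vertices carrying $r$ copies of $K_{k+1}$ through $e$; one checks $e(H_t) \ge t(t-1)/2 + $ (lower-order), which already for the ``most spread out'' case $t = r(k-1)$ and the ``most concentrated'' case $t = k-1 + $ small gives the bound.

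I expect the main obstacle to be exactly this optimization over $t$: ruling out \emph{all} intermediate overlap patterns between the fully-disjoint extreme ($t \approx rk$) and the heavily-overlapping extreme ($t \approx k$, i.e. a single large clique $K_{k+r}$ on $e$'s neighbourhood). The heavily-overlapping end is handled by the fact that $r = \lceil \eps^{-(k-1)}\rceil$ forces a clique of size about $k + \eps^{-(k-1)}$, which is super-constant, so its expected count is $o(1)$ whenever $p \le n^{-\Omega(1)}$ — this is where the condition $\eps \ge (6/\log n)^{1/(k-1)}$ enters, making $r$ not too large relative to $\log n$. The fully-disjoint end gives a term of order $n^2 p \, (n^{k-1} p^{\binom{k+1}{2}-1})^{r}$, and using $pn \le n^{k/(k+2)-\eps}$ one computes $n^{k-1} p^{\binom{k+1}{2}-1} = (pn)^{\binom{k+1}{2}-1} n^{-\binom{k}{2}} \le n^{-\Omega(\eps r)}$-type decay that kills the $r$-th power with room to spare; the threshold exponent $k/(k+2)$ is precisely the value of $x$ (with $pn = n^x$) at which $n^{k-1} p^{\binom{k+1}{2}-1}$ crosses $1$. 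Once both extremes and the convexity in $t$ are pinned down, a union bound over $t$ (only $O(n)$ values, absorbed into $n^{o(1)}$) and over $e$ finishes the argument via Markov's inequality.
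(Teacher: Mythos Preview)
Your overall framework---a first-moment argument summing over the size $t$ of the vertex set spanned by the $r$ copies---is exactly the paper's approach. But the execution has a genuine gap: you never pin down the crucial edge-count lower bound, and the bounds you do state are wrong.

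Specifically, you write ``one checks $e(H_t) \ge t(t-1)/2 +$ (lower-order)''. This is false. In the fully-disjoint configuration ($t = r(k-1)$) the number of edges is $1 + r\binom{k+2}{2} \cdot \frac{k-1}{k+2} \cdot \frac{2}{1}$ --- more precisely $1 + r\bigl(\binom{k+1}{2}-1\bigr) = 1 + \tfrac{k+2}{2}\, t$, which is \emph{linear} in $t$, not quadratic. The correct statement, and the heart of the paper's proof, is that \emph{any} graph $H$ built as a union of $(k+1)$-cliques through a common edge $e$ satisfies
\[
\frac{e(H)-1}{v(H)-2} \;\ge\; \frac{k+2}{2}.
\]
This is proved by a short induction on the number of cliques: adding a new clique introduces $n' \le k-1$ new vertices and at least $n'(k+1-n') + \binom{n'}{2} = n'\bigl(k + \tfrac12 - \tfrac{n'}{2}\bigr) \ge n' \cdot \tfrac{k+2}{2}$ new edges. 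This single inequality replaces your ``optimization over $t$'' and ``convexity'' hand-waving entirely: once you have it, you just union-bound over \emph{dense} sets $W$ (those with $e(G[W])/(|W|-2) \ge (k+2)/2$) of each size, and the condition $p \le n^{-2/(k+2)-\eps}$ makes each term $n^2(np^{(k+2)/2})^j \le n^{-\eps j /2}$ small.

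Your ``heavily-overlapping'' endpoint is also off: if all $r$ sets $W_i$ lie inside a common $t$-set, you need $\binom{t}{k-1} \ge r = \lceil \eps^{-(k-1)}\rceil$, which forces $t \gtrsim 1/\eps$, not $t \gtrsim \eps^{-(k-1)}$. This is exactly where the lower bound $\eps \ge (6/\log n)^{1/(k-1)}$ enters: it guarantees the relevant range of $j = v(H)-2$ starts above $1/\eps$ and ends below $r(k-1) = o(n^{1/2})$, so the sum over sizes has $o(n^{1/2})$ terms, each at most $n^{-1/2}$.
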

\begin{proof}
Let the graph $H$ be formed from an edge $e=uv$ together with some cliques $K_{k+1}$ containing $e$ (at least one such clique).  We will show that we have
\[ \frac{e(H)-1}{v(H)-2} \geq \frac{k+2}{2}.\]
 To see this, we may use induction on the number of cliques in a construction of $H$.  The inequality holds (at equality) if $H$ is a single $(k+1)$-clique.  Suppose that there are at least two cliques in the construction, and the last one added $n'$ new vertices and $e'$ new edges. If $n'=0$, then the inequality still holds after adding the last clique, so we may assume that $n' \geq 1.$ Then
\[ e' \ge n' (k\!+\!1\!-\!n') + {n' \choose 2} = n' \left(k + \frac 12 - \frac {n'}{2} \right) \geq  n' \left(k + \frac 12 - \frac {k-1}{2} \right) = n' \cdot  \frac{k+2}{2}, \]
where the second inequality follows since $n' \leq k-1$.  Thus $\frac{e'}{n'} \geq \frac{k+2}{2}$, and the induction step follows, since from $\frac{a+b}{c+d} \ge \min\{\frac{a}{c},\frac{b}{d}\}$ for positive $a,b,c$ and $d$, we may conclude that $\frac{e(H)-1}{v(H)-2} \geq \frac{k+2}{2}$.

The number of copies of $K_{k+1}$ on $2+\lfloor 1/\eps \rfloor$ vertices that contain the edge $uv$ is at most
\[ \binom{\lfloor 1/\eps \rfloor}{k-1} \leq \left(\frac{e (1/\eps)}{k-1}\right)^{k-1} < \eps^{-(k-1)}\]
for $k \geq 4$ (since $e<k-1$); and
$\binom{\lfloor 1/\eps \rfloor}{2} < \eps^{-2}$, so the result holds also for $k=3$.  
Thus if $H$ contains at least $r$ such cliques we must have $v(H) > 2+ 1/\eps$.  Further $H$ must then have a subgraph on at most $2+r(k-1)$ vertices which 
contains at least $r$ such cliques.
Call the set $W$ of vertices in $G$ \emph{dense} if 
\[ \frac{e(G[W])}{|W|-2} \geq \frac{k+2}{2}.\]
Let $a_i$ be the expected number of dense sets of size $i$ in $G$.
It will suffice for us to show that
\begin{equation} \label{eqn.showai}
  \sum_{ i=2+\lceil 1/\eps \rceil}^{2+r(k-1)} a_i = o(1).
\end{equation}
Now 
\begin{eqnarray*}
  a_{j+2} & \leq &  
    \binom{n}{j+2} \binom{\binom{j+2}{2}}{\lceil j(k+2)/2 \rceil} p^{\lceil j(k+2)/2 \rceil}\\
 & \leq &
  \left( \frac{en}{j+2} \right)^{j+2} \left( \frac{e (j+2)(j+1)}{j(k+2)} \cdot p \right)^{\lceil j(k+2)/2 \rceil}\\
 & \leq &
   n^2 \left( n p^{(k+2)/2} \right)^{j} \left( \frac{e}{j+2} \right)^{j+2}
   \left( \frac{e (j+2)(j+1)}{j(k+2)} \right)^{\lceil j(k+2)/2 \rceil}.
\end{eqnarray*}
But $n p^{(k+2)/2} \leq n^{-\eps(k+2)/2}$. Thus for $j \geq \lceil 1/\eps \rceil$, 
\[ n^2 \left( n p^{(k+2)/2} \right)^{j} \leq  n^2 \left( n^{- \eps (k+2)/2} \right)^{j}  \leq  n^2 n^{-2 \eps j} n^{- (\eps /2)j} \leq n^{- (\eps /2)j} \le n^{-1/2}, \]
as $k \geq 3$.  Since there are at most $2+r(k-1)=O(\eps^{-(k-1)})=o(n^{1/2})$ terms in the sum in~(\ref{eqn.showai}), the desired conclusion there follows.
\end{proof}
\medskip

Now, finally we are able to prove the lower bounds in parts (b) and (c) of Theorem~\ref{thm:Gnp}.

\begin{theorem}\label{thm:lower_bound_k}
Let $k \ge 3$ be a fixed integer, and let $\eps = \eps(n) = (6/\log n)^{\frac{1}{k-1}} = o(1)$. Suppose that $p=p(n)$ satisfies
$$
(3k)^{ \frac{2}{k}} (\log n)^{\frac {2}{k(k-1)}} n^{1-\frac {2}{k}}  \leq pn \leq n^{k/(k+2)-\varepsilon},
$$
and let $s = s(n) :=3k p^{-k/2} \left(\log n \right)^{1/(k-1)}$. Let $G=(V,E) \in \G(n,p)$. Then, \whp\ 
$\mcf(G) \leq s$ and so
$$
\chi_c(G) \ge \frac {n}{ s } = \Omega \left( \frac{(pn)^{k/2}n^{1-k/2}}{(\log n)^{1/(k-1)}} \right) = \Omega \left( \frac{np^{k/2}}{(\log n)^{1/(k-1)}} \right).
$$
\end{theorem}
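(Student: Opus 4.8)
Since $\chi_c(G) \ge n / \mcf(G)$ and the two displayed lower bounds are merely rewritings of $n/s$ (using $p^{k/2} = (pn)^{k/2} n^{-k/2}$), it suffices to prove that \whp\ no set $S$ with $|S| = \lceil s \rceil$ is maximal-clique-free, a subset of a maximal-clique-free set being again maximal-clique-free. I would argue by a union bound over the $\binom{n}{\lceil s \rceil} \le \exp(\lceil s \rceil \log n)$ candidate sets, so the target is to bound $\Prob(S_0 \text{ maximal-clique-free})$ by, say, $\exp(-\tfrac32 \lceil s \rceil \log n)$ for each fixed $S_0$ of this size. First I would record three facts. (i)~Lemma~\ref{lem:nodensegeneral} applies verbatim with our $\eps = (6/\log n)^{1/(k-1)}$: \whp\ no edge of $G$ lies in $r = \lceil \eps^{-(k-1)} \rceil$ copies of $K_{k+1}$, and in fact its proof shows \whp\ $G$ has no ``dense'' vertex set of size between $2 + \lceil 1/\eps \rceil$ and $2 + r(k-1)$. (ii)~Here $pn \le n^{k/(k+2)-\eps}$ forces $\log_{1/p} n < (k+2)/2$, so a first-moment bound as in the proof of part~(h) gives $\omega(G) \le k+2$ \whp. (iii)~Lemma~\ref{lem:lowerboundgeneral} applies with the \emph{fixed} constant $\eps_0 := 1/(k+2) \in (0, 2/k)$ in place of its parameter, since $pn \le n^{k/(k+2)} \le n^{1-\eps_0}$; and its Janson estimate in fact shows, for each fixed $S_0$, that $\Prob(G[S_0] \text{ has fewer than } N \text{ copies of } K_k) \le \exp(-2 s \log n) \le \exp(-\tfrac32 \lceil s\rceil \log n)$, where $N := \tfrac12 \binom{\lceil s \rceil}{k} p^{\binom{k}{2}} = \Theta\big((3k)^k p^{-k/2} (\log n)^{k/(k-1)}\big)$. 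We condition on the \whp\ events in (i) and (ii).

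Now fix $S_0$ with $|S_0| = \lceil s \rceil$ and expose $G[S_0]$. Off a set of $G[S_0]$-outcomes of probability $\le \exp(-\tfrac32 \lceil s \rceil \log n)$ we may assume $G[S_0]$ has at least $N$ copies of $K_k$. Since $\omega(G[S_0]) \le \omega(G) \le k+2$, iterating the observation ``a copy of $K_j$ in $G[S_0]$ is either a maximal clique of $G[S_0]$ or lies in a copy of $K_{j+1}$ in $G[S_0]$, and each copy of $K_{j+1}$ contains only $j+1$ copies of $K_j$'' upward from $j=k$ to $j=\omega(G[S_0])$, one obtains some $\ell \in \{k, k+1, k+2\}$ for which $G[S_0]$ contains a family $\mathcal{Q}$ of at least $N' := N/(3(k+1)(k+2)) = \Omega(N)$ copies of $K_\ell$, \emph{each a maximal clique of $G[S_0]$}. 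If $S_0$ is maximal-clique-free, none of these is a maximal clique of $G$, so every $Q \in \mathcal{Q}$ has a common neighbour outside $S_0$. Hence it is enough to show that, conditionally on $G[S_0]$, the probability that every member of $\mathcal{Q}$ has a common neighbour in $[n]\setminus S_0$ is at most $\exp(-\tfrac32\lceil s\rceil\log n)$; summing over the three values of $\ell$ and over the $\binom{n}{\lceil s\rceil}$ sets $S_0$ then finishes, the arithmetic working because $N = \Theta(\lceil s\rceil\log n) \gg \lceil s\rceil$.

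For that estimate, note that the number of copies of $K_{\ell+1}$ in $G$ with exactly $\ell$ vertices in $S_0$ equals $\sum_{w \notin S_0} d(w)$, where $d(w)$ is the number of copies of $K_\ell$ of $G[S_0]$ contained in $N(w)$, and that $\sum_{w\notin S_0} d(w) \ge |\mathcal{Q}|$ whenever every member of $\mathcal{Q}$ has a common neighbour outside $S_0$. The key structural point is that, conditionally on $G[S_0]$, the variables $\{d(w) : w \notin S_0\}$ are \emph{independent} --- each depends only on the edges joining $w$ to $S_0$ --- with total expectation $\sum_{w \notin S_0} \E[d(w) \mid G[S_0]] \le n p^\ell \cdot \#\{K_\ell \text{ in } G[S_0]\} = (n p^k)\cdot O(N)$, and $n p^k \le n^{(2-k)/(k+2) - k\eps} = o(1)$. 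So one must bound the probability that a sum of independent nonnegative integer variables overshoots its mean by the divergent factor $(n p^k)^{-1}$; since $\log(1/(n p^k)) = \Omega(\log n)$ throughout the range, a Chernoff/Bennett-type inequality would deliver the required $\exp(-\Omega(N' \log n)) \le \exp(-\tfrac32 \lceil s \rceil \log n)$ --- \emph{provided the summands $d(w)$ are not too heavy-tailed}.

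The main obstacle is precisely this last tail control. A vertex $w$ adjacent to abnormally many copies of $K_\ell$ of $G[S_0]$ spans, together with the union of those cliques, a small vertex set carrying a large edge surplus, which is exactly what the dense-set-free conclusion of Lemma~\ref{lem:nodensegeneral} rules out; equivalently, one first passes from $\mathcal{Q}$ to a subfamily $\mathcal{Q}' \subseteq \mathcal{Q}$ of cliques with pairwise small intersection, so that the events ``$Q$ has an external common neighbour'' become genuinely almost independent, Lemma~\ref{lem:nodensegeneral} guaranteeing $|\mathcal{Q}'| = \Omega(N)$. It is the matching of the per-edge budget $r \approx \tfrac16 \log n$ supplied by Lemma~\ref{lem:nodensegeneral} against the $(\log n)^{1/(k-1)}$ factor built into $s$ that makes the argument tight, and that explains why $s$, $r$ and $\eps$ are calibrated as they are. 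Once $\Prob(S_0 \text{ maximal-clique-free})$ is bounded as above, the union bound gives $\mcf(G) < \lceil s \rceil$ \whp, hence $\mcf(G) \le s$ and $\chi_c(G) \ge n/s$, which is the asserted bound.
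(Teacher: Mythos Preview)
Your overall plan --- union-bound over $\lceil s\rceil$-sets, use $\omega(G)\le k+2$ to pass to a large family $\mathcal{Q}$ of maximal $\ell$-cliques inside $S_0$, then argue it is very unlikely that every $Q\in\mathcal{Q}$ acquires a common neighbour outside $S_0$ --- is a genuinely different route from the paper's, but the final step has a gap you do not close. The obstacle is exactly the one you name: to apply a Chernoff/Bennett inequality to $\sum_{w\notin S_0} d(w)$ you need tail control on each summand, and $d(w)$ is a degree-$\ell$ polynomial in the bipartite edge indicators, neither bounded nor sub-Gaussian. Your two remedies do not work. First, using Lemma~\ref{lem:nodensegeneral} to cap $d(w)$ amounts to conditioning on a \emph{global} event involving the very cross-edges you want independent; after that conditioning the $d(w)$'s are no longer independent, so Chernoff no longer applies. (Harris lets you \emph{drop} such a decreasing event when bounding an increasing one, but that returns you to the unconditional $d(w)$ with no tail control.) Second, your ``equivalently'' --- thinning $\mathcal{Q}$ to a well-separated subfamily $\mathcal{Q}'$ with $|\mathcal{Q}'|=\Omega(N)$ --- is not delivered by Lemma~\ref{lem:nodensegeneral}: that lemma bounds how many $K_{k+1}$'s pass through a given edge of $G$, and says nothing about how the $\ell$-cliques of $G[S_0]$ intersect one another, so no lower bound on $|\mathcal{Q}'|$ follows.

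The paper sidesteps this by making the extension step \emph{adaptive}. It lists the copies of $K_k$ in $S$ and processes them in order: for the current clique $J$ it finds some extension $v\in V\setminus S$ (which exists if $S$ is maximal-clique-free), and then deletes from the list only those $K_k$'s that share a vertex with $J$ \emph{and are also extended by this particular $v$}. This is precisely where Lemma~\ref{lem:nodensegeneral} bites: each edge $uv$ with $u\in J$ lies in fewer than $r=\lceil\eps^{-(k-1)}\rceil$ copies of $K_{k+1}$, so at most $k(1/\eps)^{k-1}$ cliques are deleted per round, guaranteeing at least $t=(\mu/2)/(k(1/\eps)^{k-1})\ge 6s$ rounds whose extensions use pairwise distinct cross-edges. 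The conditioning is handled by Harris's inequality: with the edges inside $S$ frozen, the localised event $\cF_S$ (that no cross edge is in too many such $K_{k+1}$'s) is a down-event in the bipartite lattice, while any specified edge-disjoint extension pattern is an up-event, so $\pr(\,\cdot\mid\cF_S)\le p^{kt}$. A union bound over the at most $\binom{\mu/2}{t}n^t$ patterns then gives the required $o(n^{-s})$.

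So the missing idea is this adaptive greedy edge-disjoint construction, which lets Lemma~\ref{lem:nodensegeneral} do its work \emph{per step} (bounding how many cliques a single chosen extension can spoil) rather than globally, and lets Harris absorb the conditioning. Your observations $\omega(G)\le k+2$ and the passage to maximal cliques of $G[S_0]$ are correct and pleasant, but they are not needed, and they do not by themselves supply the independence surrogate the argument requires.
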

\begin{proof}
For simplicity we shall ignore the fact that certain numbers should be integers: this never matters here.
Let $S$ be a fixed set of size $s$. 
We shall show that
$\pr(S \mbox{ \MCF})$ is very small.

If $S$ is \MCF, 
then it must be possible to extend each copy $J$ of $K_k$ inside $S$
to $K_{k+1}$ by adding some vertex $v \in V \setminus S$ which is complete to $J$, that is, $v$ is connected by an edge to each of the $k$ vertices of $J$. Unfortunately (for our proof) these edges can be reused in extensions for different copies of $K_k$ inside $S$. Our plan is to show that, if $S$ is \MCF, then with very high probability there must be a large collection of copies of $K_k$ inside $S$ that are extended to $V \setminus S$ using each edge between $S$ and $V \setminus S$ at most once; 
and this is very unlikely to happen.

Let $\mu= \binom{s}{k} p^{\binom{k}{2}}$, the expected number of copies of $K_k$ inside $S$.
Let $\mathcal{E}_S$ be the event that $S$ 
contains at least $\frac12 \mu$ copies of $K_k$. 
Let $\mathcal{F}$ be the event that each edge appears in 
at most $(1/\eps)^{k-1}$ copies of $K_{k+1}$. 
Let $\cF_S$ be the event that each edge $uv$ with $u \in S$ and $v \in V \setminus S$ appears in at most $(1/\eps)^{k-1}$ copies of $K_{k+1}$ such that each vertex other than $v$ is in $S$. 
Let $E_0$ be any set of edges within $S$ such that $\cE_S$ holds, and condition on $E|_S=E_0$.  Here $E|_{S}$ denotes the set of edges within $S$ in the random edge set $E$ of $G$.
 Observe that, since $E|_S$ is fixed, $\cF_S$ becomes a down-event in the lattice $\Omega$ of subsets of $E(S,V \setminus S)$.  
 %
Now let $t \geq 1$, and consider a possible `edge-disjoint extension of length $t$'.
Suppose that $J_1,\ldots,J_t$ are distinct copies of $K_k$ inside $S$, and $v_1,\ldots, v_t$ are vertices in $V \setminus S$ (not necessarily distinct), such that each possible edge appears at most once as $uv_i$ for some vertex $u$ in $J_i$. Then the event that $v_i$ is complete to $J_i$ for each $i \in [t]$ is the up-event in $\Omega$ that each of the $kt$ relevant edges is present.  Hence by Lemma~\ref{lem.Harris} (Harris's inequality)
\begin{equation}\label{eqn.disjext}
 \pr(v_i \mbox{ is complete to } J_i \mbox{ for each } i \in [t] \, | \, (E|_S=E_0) \land \cF_S) \leq p^{kt}.
\end{equation}

Keep the conditioning on $E|_S=E_0$, condition also on $\cF_S$, 
and suppose that $S$ is \MCF.  List the copies of $K_k$ inside $S$ in say lexicographic order, discarding any after the first $\frac12 \mu$; and consider the vertices in $V \setminus S$ in their natural order. 
Consider the first copy $J$ of $K_k$ on the list: test the vertices in $V \setminus S$ one by one until we find a vertex $v$ complete to $J$ (we must succeed since $S$ is maximal-clique-free).
Eliminate from the list the clique $J$ and all other copies $J'$ of $K_k$ such that $J'$ and $J$ have at least one vertex in common and $v$ is complete to $J'$ (so the extensions by $v$ have an edge in common).
Since we conditioned on $\cF_S$, at most $k(1/\eps)^{k-1}$ copies of $K_k$ are eliminated from the list in this round, and we move to the next copy of $K_k$ that is left in the list and continue. 
Since we also conditioned on $E|_S=E_0$, there are at least $\mu/2$ copies of $K_k$ in $S$,
and so the process must last for at least $t$ rounds, where
\[ t =  \frac {\mu/2}{k(1/\eps)^{k-1}}.\] 
Now we lower bound $t$: we have
\begin{eqnarray*}
t & = & \frac {1}{2 k(1/\eps)^{k-1}} \binom{s}{k} p^{\binom{k}{2}} ~~\sim~~ \frac {\eps^{k-1}}{2 k \cdot k!} s^k p^{\binom{k}{2}} \\
&\ge& \frac {\eps^{k-1}}{2 e k^2 (k/e)^k} \left( 3k p^{-k/2} (\log n)^{1/(k-1)} \right)^k p^{\binom{k}{2}}, 
\end{eqnarray*}
as $k! \le ek (k/e)^k$. We get 
$$
t \ge (1+o(1)) \frac {(e\eps)^{k-1} 3^k}{2 k^2} p^{-k/2} (\log n)^{k/(k-1)} > 3 \eps^{k-1} k p^{-k/2} (\log n)^{k/(k-1)} = \eps^{k-1} s \log n = 6 s,
$$
as $3^k e^{k-1}/(2k^3) > 3$ for $k \ge 3$. 

The number of sets of $t$ distinct copies $J_1,\ldots, J_t$ of $K_k$ inside $S$ which are on the list, and vertices $v_1,\ldots,v_t$ yielding possible edge-disjoint extensions as in~(\ref{eqn.disjext}), is at most $\binom{\frac12 \mu}{t} n^t$.  Thus, by~(\ref{eqn.disjext}) and a union bound,
\[ \pr(S \mbox{ \MCF} \, | \, (E|_S=E_0) \land \cF_S) \leq \binom{\frac12 \mu}{t} n^t p^{kt}.\]
Since this holds for every choice $E_0$ of edges within $S$ such that $\cE_S$ holds, and since $\cF \subseteq \cF_S$, it follows that
\[ \pr((S \mbox{ \MCF}) \land \cE_S \land \cF) \leq \binom{\frac12 \mu}{t} (n p^{k})^t.\]
We have
\begin{eqnarray*}
\binom{\frac12 \mu}{t}
(np^k)^t &\le & 
\exp \left( t \log \left( \frac{e(3e)^k(\log n)^{k/(k-1)}np^{k/2} }{2t}\right) \right) \\
&\le& \exp \left( t \log \left( \frac{e(3e)^k np^k \log n }{36k} \right) \right) \\
&\le& \exp \left( -t \left (\frac {2k}{k+2} + \eps k - 1+o(1) \right) \log n \right) \\ 
& \le & \exp \left( - \frac {t}{5+o(1)} \log n \right) 
\end{eqnarray*}
which is $o(n^{-s})$ since  $ t \ge (6+o(1))s$. 
Let us rename the fixed set $S$ as $S_0$. Then, with the union below being over all $s$-subsets $S$ of vertices,
\begin{eqnarray*}
&& \pr\left(\lor_{|S|=s} ((S \mbox{ \MCF}) \land \cE_{S} \land \cF) \right)\\
&\leq &\binom{n}{s}  \pr((S_0 \mbox{ \MCF}) \land \cE_{S_0} \land \cF)\\
& \leq & \binom{n}{s} \binom{\frac12 \mu}{t} (n p^{k})^t
 \; = \; o(1).
\end{eqnarray*}
Finally, let $\cE= \land_{|S|=s} \cE_S$.
Then 
\[ \pr(\mcf(G) \geq s) \leq \pr\left(\lor_{|S|=s} ((S \mbox{ \MCF}) \land \cE_S \land \cF) \right) + \pr(\bar{\cE})  +\pr(\bar{\cF}). \]
But each of the three terms in the upper bound here is $o(1)$.  We have just seen this for the first term: the second and third terms are $o(1)$ by
Lemmas~\ref{lem:lowerboundgeneral} and~\ref{lem:nodensegeneral} respectively. The proof of the theorem is complete.
\end{proof}


\subsection{Lower bounds for parts (d)-(g)}

Let us start with the following lemma, which is proved in a similar way to Lemma~\ref{lem:lowerboundgeneral}.

\begin{lemma}\label{lem:lowerboundgeneral2}
Let $k \ge 3$ be a fixed integer, and let $\varepsilon > 0$ be a fixed arbitrarily small positive constant. Let $C=C(n)$ be such that $1 \leq C=n^{o(1)}$. Suppose that $p=p(n)$ satisfies
$$
(3k)^{2/(k+2)} (\log n)^{\frac{2}{(k-1)(k+2)}} n^{\frac{k}{k+2}} C^{-2/(k+2)} \leq pn \leq n^{1-\varepsilon}.
$$
Let $s = s(n)=Cpn$, and let $G=(V,E) \in \G(n,p)$. Then, \whp\ the following property holds: every set $S \subseteq V$ of $\lceil s \rceil$ vertices contains at least $\frac12 \binom{\lceil s \rceil}{k} p^{\binom{k}{2}} =\Theta\left( s^k p^{\binom{k}{2}}\right)$ copies of $K_k$.
\end{lemma}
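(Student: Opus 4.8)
The plan is to mimic the proof of Lemma~\ref{lem:lowerboundgeneral} almost verbatim, using Janson's inequality (Lemma~\ref{thm:Janson}) together with a union bound over all $\binom{n}{\lceil s \rceil}$ sets of size $\lceil s \rceil$. Fix a set $S \subseteq V$ with $|S| = \lceil s \rceil$, let $X$ count copies of $K_k$ inside $S$, and set $\mu = \E[X] = \binom{\lceil s \rceil}{k} p^{\binom{k}{2}}$. As in Lemma~\ref{lem:lowerboundgeneral}, decompose $\overline{\Delta}$ according to the overlap size $i = |A \cap B| \in \{2,\ldots,k\}$, giving
\[
\overline{\Delta} \;=\; \sum_{i=2}^{k} \binom{\lceil s \rceil}{2k-i}\binom{2k-i}{k}\binom{k}{i} p^{2\binom{k}{2}-\binom{i}{2}} \;\sim\; s^{2k} p^{k(k-1)} \sum_{i=2}^{k} \frac{s^{-i}}{(k-i)!^2\, i!}\, p^{-\binom{i}{2}}.
\]
The $i$-th term of the inner sum has order $s^{-i} p^{-\binom{i}{2}}$; substituting $s = Cpn$ one checks this is $(Cpn)^{-i} p^{-i(i-1)/2} = C^{-i} (p^{(i+1)/2} n)^{-i}$, which (since $p = n^{-o(1)}$ is ruled out — actually $p$ may be small, so one argues via the lower bound on $pn$) is dominated by the $i=k$ term exactly as before. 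Hence $\overline{\Delta} \sim \binom{\lceil s \rceil}{k} p^{\binom{k}{2}} = \mu$.

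Then I would apply Lemma~\ref{thm:Janson} with $t = \E[X]/2$ to obtain
\[
\Prob\!\left(X \le \tfrac{\E[X]}{2}\right) \;\le\; \exp\!\big(-(1+o(1))\varphi(-1/2)\mu\big) \;\le\; \exp\!\left(-\tfrac{0.15\, s^k}{k!}\, p^{\binom{k}{2}}\right).
\]
Now plug in $s = Cpn$ and use the hypothesis $pn \ge (3k)^{2/(k+2)} (\log n)^{\frac{2}{(k-1)(k+2)}} n^{k/(k+2)} C^{-2/(k+2)}$. Raising both sides to the power $(k+2)/2$ rearranges to $s^k p^{\binom{k}{2}} = C^k (pn)^k p^{k(k-1)/2} \ge (3k)^{k-1} \cdot s \log n$ (the bookkeeping here is the analogue of the identity $s^k p^{\binom{k}{2}} = (3k)^{k-1} s \log n$ in Lemma~\ref{lem:lowerboundgeneral}, now with the extra $C$ factors cancelling because $s = Cpn$ and the lower bound on $pn$ was chosen precisely to make this work). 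This yields $\Prob(X \le \E[X]/2) \le \exp(-2 s \log n)$ for $n$ large, after absorbing the constant $0.15/k!$ against the polynomial slack. Finally, a union bound over the $\binom{n}{\lceil s \rceil} \le \exp(\lceil s \rceil \log n)$ choices of $S$ gives total failure probability at most $\exp(-(2-1+o(1)) s \log n) = o(1)$, and the condition $pn \le n^{1-\varepsilon}$ ensures $s = Cpn = n^{1-\varepsilon+o(1)} \le n$ so that the sets exist.

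The main obstacle — really the only non-routine point — is verifying that the choice of the lower bound on $pn$ in the hypothesis is exactly the one that converts the raw Janson estimate $\exp(-\Omega(s^k p^{\binom{k}{2}}))$ into something beating $\binom{n}{\lceil s \rceil}$, i.e. checking $s^k p^{\binom{k}{2}} \gtrsim s \log n$ with the right constant. Because $s = Cpn$ rather than $s = 3kp^{-k/2}(\log n)^{1/(k-1)}$ as in Lemma~\ref{lem:lowerboundgeneral}, the algebra is slightly different: one must track the $C$ factors and confirm that the exponent of $n$ in $s^k p^{\binom{k}{2}}/s = C^{k-1}(pn)^{k-1} p^{k(k-1)/2} = (C (pn) p^{k/2})^{k-1}$ is nonnegative, which is precisely the rearranged form of the stated lower bound on $pn$. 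Everything else — the asymptotic evaluation of $\overline{\Delta}$, the domination by the $i=k$ term, the union bound — is identical in structure to the proof of Lemma~\ref{lem:lowerboundgeneral} and can be carried out with the same estimates.
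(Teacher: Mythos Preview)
Your proof has a genuine gap: the claim that $\overline{\Delta}$ is dominated by the $i=k$ term (and hence $\overline{\Delta} \sim \mu$) is \emph{false} for part of the range of $p$ allowed here. In Lemma~\ref{lem:lowerboundgeneral} this worked because $s = 3k p^{-k/2}(\log n)^{1/(k-1)}$ was chosen precisely so that the $i$-th term $s^{-i}p^{-\binom{i}{2}}$ is monotone in $i$ with the $i=k$ term largest. But here $s = Cpn$ is a completely different scale, and the ratio of consecutive terms $a_{i+1}/a_i = s^{-1}p^{-i} = (Cn p^{i+1})^{-1}$ is increasing in $i$, so the sequence $a_2,\ldots,a_k$ is unimodal with its maximum at one of the \emph{endpoints}. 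Comparing $a_2$ and $a_k$ directly, the $i=2$ term dominates whenever $p \gg (Cn)^{-2/(k+3)}$; for example with $k=3$ this means $pn \gg n^{2/3}$, which is well inside the hypothesised range $pn \le n^{1-\varepsilon}$.

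When the $i=2$ term dominates one has $\overline{\Delta} = \Theta(\mu^2 s^{-2} p^{-1})$ rather than $\overline{\Delta} \sim \mu$, so Janson's inequality gives only $\Prob(X \le \mu/2) \le \exp(-\Omega(s^2 p))$, not $\exp(-\Omega(\mu))$. The paper therefore splits into two cases. In the second case one must check $s^2 p \gg s \log n$, i.e.\ $C(pn)^2/n \gg \log n$; this follows from the lower bound on $pn$ since $C(pn)^2/n \ge n^{2k/(k+2)-1+o(1)} \ge n^{1/5+o(1)}$. Your argument as written covers only the first case and would fail for $p$ near the top of the range. The remaining bookkeeping (checking $s^{k-1}p^{\binom{k}{2}} \gtrsim \log n$ in the first case, and the union bound) is indeed routine once the case split is in place.
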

\begin{proof}
As before, for simplicity we shall ignore the fact that certain numbers should be integers.
For a fixed set $S \subseteq V$ of size $\lceil s \rceil$, let $X$ be the random variable counting the number of copies of $K_k$ in $S$. Clearly, $\mu = \E [X]=\binom{\lceil s \rceil}{k}p^{\binom{k}{2}}$. Using the notation of Lemma~\ref{thm:Janson}, we may independently deal with the cases $|A \cap B| = i$ with $2 \le i \le k$ to get
\begin{eqnarray}\label{eqarray:Delta} 
\nonumber \overline{\Delta} &=& \sum_{i=2}^{k} \binom{\lceil s \rceil}{2k-i}\binom{2k-i}{k} \binom{k}{i} p^{2\binom{k}{2}-\binom{i}{2}} \sim \sum_{i=2}^k \frac {s^{2k-i}}{(k-i)!^2 i!} p^{k(k-1)-\binom{i}{2}}\\ 
&=& s^{2k} p^{k(k-1)} \sum_{i=2}^k \frac {s^{-i}}{(k-i)!^2 i!} p^{-\binom{i}{2}}.
\end{eqnarray}
For $i=2,3,\ldots, k-1$, let $a_i=s^{-i} p^{-\binom{i}{2}}$ and note that the $i$th term in the last sum is $\Theta(a_i)$.
The ratio $a_{i+1}/a_i = s^{-1}p^{-i} = \frac{1}{Cn}p^{-i-1}$,
which is increasing with $i$.  It follows that $a_2,\ldots,a_k$ is a unimodal sequence, with maximum either $a_2$ or $a_k$.
Moreover, since $1/p \ge n^{\varepsilon}$, at most two consecutive terms can be of the same order. As a result, the sum is of order the larger of the term $i=2$ and the term $i=k$. 
(Note that we did not rule our the possibility that the $(k-1)$-st term is of order of the $k$-th term yet.)
More precisely, comparing the two terms, the sum is of order of the term $i=k$ if $p \ll n^{-2/(k+3)} C^{-2/(k+3)}$, and of order of the term $i=2$ otherwise.  Moreover, in the case $p \ll n^{-2/(k+3)} C^{-2/(k+3)}$, by comparing the $(k-1)$-st and the $k$-th term, we see that the $k$-th term dominates the $(k-1)$-st term if $p \ll (Cn)^{-1/k}$, and since $n^{-2/(k+3)} C^{-2/(k+3)} \le (Cn)^{-1/k}$ for $k \ge 3$, this condition is satisfied. Hence, if $p \ll n^{-2/(k+3)} C^{-2/(k+3)}$, the sum is in fact asymptotic to the term $i=k$. 

Suppose first that $p \ll n^{-2/(k+3)} C^{-2/(k+3)}$. As we already mentioned, in this case $\overline{\Delta} \sim \mu$, and so applying Lemma~\ref{thm:Janson} with $t = \E [X]/2$ we get 
\begin{eqnarray*}
\Prob \left( X \leq \frac{\E [X]}{2} \right) &\le&  \exp \big( - (1+o(1)) \varphi(-1/2) \mu \big) \le \exp \left(-  \frac {0.15 (Cpn)^k}{k!} p^{\binom{k}{2}} \right) \\
&\leq&   \exp \left(- \frac {0.15 (3k)^{k-1} (Cpn) \log n}{k!} \right) \le \exp( -2 s \log n),
\end{eqnarray*}
by our assumption that $pn \ge (3k)^{2/(k+2)} (\log n)^{\frac{2}{(k-1)(k+2)}} n^{\frac{k}{k+2}} C^{-2/(k+2)}$.
Taking a union bound over all $\binom{n}{\lceil s \rceil} \le \exp( \lceil s \rceil \log n)$ sets of size $\lceil s\rceil$, the desired property holds.

Assume now that $p=\Omega(n^{-2/(k+3)} C^{-2/(k+3)})$. In this case we get 
$$
\overline{\Delta} = \Theta \left(\mu^2 p^{-1} s^{-2}\right).
$$
Therefore, applying Lemma~\ref{thm:Janson} with $t = \E [X]/2$ we get 
\begin{eqnarray*}
\Prob \left( X \leq \frac{\E [X]}{2} \right) &\le&  \exp \big( - \Omega(\mu^2/\overline{\Delta}) \big) = \exp \left(-  \Omega \left( \frac{s C (pn)^2}{n}\right)\right) \le \exp( -2 s \log n), 
\end{eqnarray*}
as, by our assumption on $p$ and $C$, we have $C(pn)^2/n \ge n^{2k/(k+2)-1+o(1)} \ge n^{1/5+o(1)} \gg \log n$.
Taking a union bound over all $\binom{n}{\lceil s \rceil} \le \exp( \lceil s \rceil \log n)$ sets of size $\lceil s\rceil$, the desired property holds.
\end{proof}

We can now prove the following theorem that implies the lower bounds in parts (e) and (g) of Theorem~\ref{thm:Gnp}.

\begin{theorem}\label{thm:lower_bound_k2}
Let $k \ge 3$ be a fixed integer. Suppose that $p=p(n)$ satisfies 
$$
(\log n)^{\frac{2k}{(k-1)(k+2)}} n^{\frac{k}{k+2}} \leq pn \leq (4e^2 k \log^2 n)^{-1/k} n^{(k-1)/k}.
$$
Let $s = s(n) = Cpn$, where 
\begin{equation}\label{eq:def_C}
C=C(n)=\left(\frac {32 k^2 \log n}{-\log( p^k n 4ek\log^2 n)} \right)^{1/(k-1)}.
\end{equation}
Let $G=(V,E) \in \G(n,p)$. Then, \whp\  $\mcf(G) \leq s$, and so
$$
\chi_c(G) \ge \frac {n}{s} = \frac {1}{Cp} = \frac{n^{o(1)}}{p}.
$$
\end{theorem}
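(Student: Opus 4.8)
The plan is a union bound over all $\binom{n}{s}$ sets $S$ of $s=Cpn$ vertices, showing that $\pr(S \text{ is maximal-clique-free})$ is small enough to beat $\binom{n}{s}\le e^{s\log n}$; the argument refines that of Theorem~\ref{thm:lower_bound_k} and relies on Lemma~\ref{lem:lowerboundgeneral2}. Fix $S$, and let $\mu=\binom{s}{k}p^{\binom{k}{2}}$ be the expected number of copies of $K_k$ inside $S$. By Lemma~\ref{lem:lowerboundgeneral2} — applicable because the hypothesis on $pn$ here implies the one there (take, say, $\varepsilon=1/(2k)$) and because $C\ge 1$ — whp every $S$ of size $s$ contains at least $\mu/2$ copies of $K_k$, and one checks that the definition of $C$ (in particular the term $-\log(p^kn\cdot 4ek\log^2 n)$ in its denominator) makes $\mu$ comfortably larger than $s\log n$. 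If $S$ is maximal-clique-free, then every copy $J$ of $K_k$ in $S$ that is a maximal clique of $G[S]$ must have a common neighbour in $V\setminus S$ (it is a non-maximal clique of $G$ which cannot be extended inside $S$), and distinct copies give distinct such extensions. Hence, writing $N_S$ for the number of pairs $(J,v)$ with $J$ a copy of $K_k$ in $S$ and $v\in V\setminus S$ complete to $J$ — equivalently, the number of copies of $K_{k+1}$ with exactly $k$ vertices in $S$ — a maximal-clique-free $S$ forces $N_S$ to be at least the number $M_S$ of copies of $K_k$ in $S$ that are locally maximal (not inside a $K_{k+1}$ in $S$).

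The crucial point, and the reason the \emph{upper} bound on $pn$ appears, is that it forces $np^k\le (4e^2k\log^2 n)^{-1}\ll 1$. Consequently $\mathbb E[N_S]\approx \mu\,np^k\ll\mu$: there are simply not enough vertices in $V\setminus S$ to extend all the copies of $K_k$ in $S$. When $G[S]$ is ``generic'' one has $M_S\ge \mu/3$ (most copies of $K_k$ are locally maximal, since the expected number of copies of $K_{k+1}$ in $S$ is $o(\mu)$), while a moment / large-deviation estimate for $N_S$ of the shape $\pr(N_S\ge \mu/3)\le (O(np^k))^{\Omega(\mu)}=\exp(-\Omega(\mu\log\log n))$ decays fast enough that, because $\mu\gg s\log n$, it survives the factor $\binom{n}{s}$. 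The atypical sets $S$ — those for which $G[S]$ is unusually dense somewhere, e.g. some $v\in S$ has an abnormally large degree inside $S$, or $G[S]$ contains an abnormally large clique — are handled separately and more easily: for such $S$ one can (whp) enlarge a clique living in such a dense spot to a clique of $G[S]$ that is in fact maximal in $G$, so $S$ already contains a maximal clique of $G$ and is not maximal-clique-free in the first place. Combining the two cases, whp no $S$ of size $s$ is maximal-clique-free, so $\mcf(G)<s$ and $\chi_c(G)\ge n/\mcf(G)>n/s=1/(Cp)=n^{o(1)}/p$ since $C=n^{o(1)}$.

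I expect the delicate part to be exactly this split — identifying the right ``genericity'' of $S$ and, on the generic part, proving a tail bound for $N_S$ (equivalently for the number of copies of $K_{k+1}$ with a prescribed split of vertices between $S$ and $V\setminus S$) that both absorbs the localization mechanism (an atypically dense piece of $G$ falling inside $S$) and is strong enough, thanks to the finely tuned constant $C$, to beat $\binom{n}{s}$. This is precisely why the clean ``edge-disjoint extensions'' argument of Theorem~\ref{thm:lower_bound_k} cannot be reused verbatim here: in this denser range edges of $G$ typically lie in \emph{many} copies of $K_{k+1}$, so there is no analogue of Lemma~\ref{lem:nodensegeneral}, and one must instead exploit $np^k\ll 1$ head-on.
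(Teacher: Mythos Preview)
Your overall framing is right — union bound over $s$-sets, many $K_k$'s in each by Lemma~\ref{lem:lowerboundgeneral2}, and the upper bound on $pn$ forces $np^k \ll 1$ so extensions to $V\setminus S$ are rare.  But the heart of your argument, the inequality
\[
\Pr\bigl(N_S \ge \mu/3\bigr) \;\le\; \bigl(O(np^k)\bigr)^{\Omega(\mu)},
\]
is the step that does not go through, and this is precisely the difficulty the paper has to work around.

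The events $E_J = \{J \text{ extends to some } v \notin S\}$ are \emph{up-events} in the cross-edges, so Harris's inequality gives $\Pr(\bigcap_J E_J) \ge \prod_J \Pr(E_J)$; the correlation goes the wrong way for your bound.  Concretely, in this range each edge $uv$ (with $u\in S$, $v\notin S$) typically lies in roughly $(np^2)^{k-1}/(k-1)!$ copies of $K_{k+1}$ with $k$ vertices in $S$; near the top of the range this is a positive power of $n$.  Thus a single vertex $v$ adjacent to a modest set of $m$ vertices in $S$ can simultaneously extend $\binom{m}{k}$ copies of $K_k$, with probability $p^m$ — far larger than $(np^k)^{\binom{m}{k}}$.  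Neither a moment computation nor a Chernoff bound on $\sum_v Z_v$ with iid $Z_v$ avoids this, because the heavy upper tail of each $Z_v$ is driven exactly by such clustering and is present even for ``generic'' $G[S]$ (in this range $K_k$'s in $S$ overlap heavily by design, not only for atypical $S$).  Your sketch of handling atypical $S$ by finding a maximal clique in a dense spot does not neutralise this: the obstruction is not an unusually dense \emph{internal} structure of $G[S]$, but the fact that one external vertex certifies many extensions at once.

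The paper's fix is a new idea you are missing: independently colour each $k$-set of vertices \emph{red} with probability $1/L$, with $L$ chosen so that whp every cross-edge lies in at most $O(\log^2 n)$ red $K_{k+1}$'s (this uses Lemma~\ref{lem:vu} on the codegree graph).  Restricting attention to red $K_k$'s inside $S$ — there are $\Theta(\mu/L)$ of them by a Chernoff bound — one can now run the edge-disjoint extension argument of Theorem~\ref{thm:lower_bound_k} verbatim, but with the bounded multiplicity $O(\log^2 n)$ replacing Lemma~\ref{lem:nodensegeneral}.  This yields
\[
\Pr(S \text{ \MCF}\mid \cH_S \land \cD_S)\;\le\;\binom{\mu/4L}{t}(np^k)^t \;\le\; (4ek\, np^k \log^2 n)^{t}
\]
with $t = \Theta(C^{k-1}s)$, not $\Theta(\mu)$; the constant $C$ in~\eqref{eq:def_C} is tuned exactly so that this becomes $\exp(-(2+o(1))s\log n)$.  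So the missing ingredient is the random sparsification of the family of $K_k$'s, which manufactures the bounded-edge-multiplicity condition that made Theorem~\ref{thm:lower_bound_k} work.
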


\noindent 
Before we move to the proof of this result, let us make a few comments. First, obviously, if there are several values of $k$ that satisfy the assumptions on $pn$, one should consider the one that gives the best lower bound for $\chi_c(G)$. In particular, the case $k=3$ covers values of $pn$ between $n^{3/5} (\log n)^{3/5}$ and $n^{2/3} (12e^2 \log^2n)^{-1/3}$, and the case $k=4$ covers the range of $pn$ between $n^{2/3} (\log n)^{4/9}$ and $n^{3/4} (16e^2\log^2n)^{-1/4}$. The case $k=5$ can be applied already if $pn$ is at least $n^{5/7} (\log n)^{5/14}$, and so for values of $pn$ in the interval $[n^{5/7} (\log n)^{5/14}, n^{3/4} (16e^2\log^2n)^{-1/4}]$ both $k=4$ and $k=5$ satisfy the conditions of the theorem. Similarly, if we fix $\eps >0$ (arbitrarily small), all values of $pn$ that belong to the interval $[n^{5/7} (\log n)^{5/14}, n^{1-\eps}]$ are covered by at least two values of $k$ at most the constant $2/\eps$ (since $p \geq n^{-\frac{2}{k+2}}$, for $p \leq n^{-\eps}$ we are concerned only with $k$ such that $n^{-\frac{2}{k+2}} \leq n^{-\eps}$, that is $k \leq 2/\eps -2 \leq 2/\eps$).
If $p$ is in the interval for $k-1$ then
$p \leq n^{-1/(k-1)} \leq n^{-1/k - \eta}$, for some
$\eta \geq 1/(k-1)-1/k=1/k(k-1) 
\geq \eps^2/4$.
Thus for at least one of the relevant values $k$ (any one except the smallest) we have $pn \le n^{(k-1)/k - \eta}$, and so $C \le (32k/(\eta+o(1)))^{1/(k-1)} = O(1)$.
This in turn implies that $\mcf(G) =O(pn)$ and so $\chi_c(G)=\Omega(1/p)$. The only range not covered by this theorem is when $pn = n^{3/5+o(1)}$ or $pn = n^{2/3+o(1)}$ which will be done separately later on.

\begin{proof}
%
The proof is similar to the proof of Theorem~\ref{thm:lower_bound_k}. However, one additional idea is needed: informally speaking, we now have too many copies of $K_k$, but still only $\Theta(s)$ of them are going to be extended to $K_{k+1}$ using disjoint edge sets. The union bound over all choices of these would be too large to be successfully applied, and so we focus on a randomly chosen subset of the $K_k$'s. To this end, define 
\[ L:= \left\lceil \frac{n^{k\!-\!1} p^{(k+2)(k-1)/2}}{(k-1)! (\log n)^2} \right \rceil \ge \frac {(\log n)^{k-2}}{(k\!-\!1)!}.\]
Now, in order to obtain the desired smaller subset of cliques, we do the following, independently of the random graph $G$:
for each set of $k$ vertices, we independently colour it \emph{red} with probability $1/L$. Our first goal is to show that no edge $uv$ belongs to too many cliques of size $k+1$ which after removal of $v$ form a red $k$-set. 

Let $\eta=\eta(n) = o(1)$, say $\eta=n^{-\tfrac1{11}}$.
For each pair of distinct vertices $u$ and $v$, let $\mathcal{E}_{u,v}$ be the event that 
\begin{equation}\label{eq:codeg}
 \left| |N(u) \cap N(v)| - np^2 \right| \leq \eta \, np^2.
\end{equation} 
Let $\cE$ be the event that $\cE_{u,v}$ holds for each pair $u \neq v$.  Since $\eta^2 np^2 \geq n^{1/55}$, by the Chernoff bounds~\eqref{eq:chern1} and~\eqref{eq:chern2}, $\pr(\bar{\cE}_{u,v}) =o(n^{-2})$; and so by a union bound, \whp\ $\cE$ holds.

Fix any ordered pair of distinct vertices $u, v$, and any set $I \subseteq V \setminus \{u,v\}$ with $|I|=k-1$. Let $X_{u,v,I}=1$ if $I$ induces a clique, $u$ and $v$ are both complete to $I$ (we do not require $u$ and $v$ to be adjacent). Finally, let $X_{u,v}=\sum_{I \subseteq V \setminus \{u,v\}} X_{u,v,I}$.
Let $m=m(n) = \lfloor (1+\eta)np^2 \rfloor$.  Let $Z$ be the number of $(k-1)$-cliques in the random graph with vertex set $[m]$ and edge-probability $p(n)$.   Clearly, conditional on $\cE_{u,v}$,$\, X_{u,v}$ is stochastically at most $Z$.  We have
\begin{equation}\label{eq:expectation_for_Vu}
\mathbb{E} Z=\binom{m}{k-1} p^{\binom{k-1}{2}} \sim L\log^2 n \geq \frac {\log^k n}{(k-1)!} \gg (\log n)^{k-2}.
\end{equation}
Hence, by Lemma~\ref{lem:vu}, once $n$ is sufficiently large that $\mathbb{E}Z \leq (4/3) L \log^2 n$,
\begin{eqnarray*}
\mathbb{P}(X_{u,v} \geq 2 L \log^2 n \, | \, \cE_{u,v}) &\leq& \mathbb{P}(Z \geq (3/2) \mathbb{E}Z) \\
&\leq& \exp\Big(- \Omega( (\mathbb{E}Z)^{1/(k-2)}) \Big)\\
&=& \exp\Big(-\omega (\log n)\Big) = o(n^{-2}).
\end{eqnarray*}
Thus
\[ \pr(X_{u,v} \geq 2 L \log^2 n) \leq \pr(X_{u,v} \geq 2 L \log^2 n \, | \, \cE_{u,v})  + \pr(\bar{\cE}_{u,v}) = o(n^{-2}).\]
By taking a union bound over all ordered pairs of distinct vertices, we see that \whp\ for each $u \neq v$ we have $X_{u,v} \le 2 L \log^2 n$. Now let $Y_{u,v,I}$ be defined as $X_{u,v,I}$ with the additional condition that $I \cup \{u\}$ is red, and let $Y_{u,v}=\sum_I Y_{u,v,I} \le X_{u,v}$. Clearly, conditional on $X_{u,v} \le 2 L \log^2 n$, $Y_{u,v}$ is stochastically bounded above by the binomial random variable $\Bin(2 L \log^2 n, 1/L)$ with expectation $2 \log^2 n$. It is a straightforward application of~\eqref{eq:chern2} (together with a union bound over all ordered pairs $u,v$) to see that \whp\ for each pair $u\neq v$ we have $Y_{u,v} \le 4 \log^2 n$. Let $\mathcal{D}$ be the event that this property holds, so that $\cD$ holds \whp.  
This completes the first part of the proof.
\smallskip

For the rest of the proof, we argue very much as in the proof of Theorem~\ref{thm:lower_bound_k}.
As before, we ignore rounding issues for $s$, since this does not matter.
Let  $s=Cpn$ and let $S$ be a set of size $s$.  
Let $\cD_S$ be the event that each edge $uv$ with $u \in S$ and $v \in V \setminus S$ appears in at most $4 \log^2 n$ copies of $K_{k+1}$ which are such that the $k$ vertices other than $v$ form a red $k$-set in $S$.  Note that $\cD \subseteq \cD_S$.
Let $\mu= \binom{s}{k} p^{\binom{k}{2}}$, the expected number of $k$-cliques in $S$; so $\mu/L$ is the expected number of red $k$-cliques in $S$.
Let $\cH_S$ be the event that $S$ contains at least $\mu/(4L)$ 
red cliques of size $k$; and let $\cH = \land_{|S'|=s} \cH_{S'}$.  (We shall check later that $\cH$ holds \whp) 

Denote the random colouring of the $k$-sets of vertices by $R$; and let $R|_S$ denote its restriction to the $k$-sets in $S$.  Let $E_0$ be any set of edges within $S$ and let $R_0$ be any colouring of the $k$-sets within $S$ such that $\cH_S$ holds when $E|_S=E_0$ and $R|_S=R_0$.  Condition on  $E|_S=E_0$ and $R|_S=R_0$.
Let $t \geq 1$, and consider a possible `edge-disjoint extension of length $t$'.
Suppose that $J_1,\ldots,J_t$ are distinct red cliques of size $k$ inside $S$, and $v_1,\ldots, v_t$ are vertices in $V \setminus S$ (not necessarily distinct), such that each possible edge appears at most once as $uv_i$ for some vertex $u$ in $J_i$. Then, as in~(\ref{eqn.disjext}),\begin{equation} \label{eqn.disjext2}
 \pr(v_i \mbox{ is complete to } J_i \mbox{ for each } i \in [t] \, | \, (E|_S=E_0) \land (R|_S=R_0) \land \cD_S) \leq p^{kt}.
\end{equation}
Also, as before, if 
$S$ is \MCF, then by considering a list of red $k$-cliques of length $\mu/4L$, there must be a disjoint extension of length  $t:=(\mu/4L)/(4k \log^2 n)$.  
Hence, by a union bound,
\[ \pr(S \mbox{ \MCF} \, | \, (E|_S=E_0) \land (R|_S=R_0) \land \cD_S) \leq \binom{\mu/4L}{t} n^t p^{kt}.\]
Since this holds for each choice of $E_0$ and $R_0$ such that $\cH_S$ holds,
\[ \pr(S \mbox{ \MCF} \mid \cH_S \land \cD_S) \leq \binom{\mu/4L}{t}(np^k)^t.\]
Note that
\begin{eqnarray*} 
\binom{\mu/4L}{t}(np^k)^t &\le& \left( 4 e k np^k \log^2 n \right)^t \\
&=& \exp\left( \frac{\mu}{16 kL \log^2 n} \log (4eknp^k  \log^2 n)\right)\\
&=& \exp\left((1+o(1))\frac{sC^{k-1}}{16k^2}\log (4eknp^k \log^2 n)\right).
\end{eqnarray*}
Plugging in the value of $C$, we see that this expectation is at most $\exp(-(2+o(1))s \log n)$. 
Hence
\[ \binom{n}{ s } \pr \left(S \mbox{ \MCF} \land \cH_S \land \cD_S \right) = o(1).\]
But, letting $\cH=\land_{|S|=s} \cH_S$,  
\[ \pr(\mcf(G) \geq s) \leq \binom{n}{ s } \pr \left(S \mbox{ \MCF} \land \cH_S \land \cD_S \right) + \pr(\bar{\cH}) + \pr(\bar{\cD}).\]
%
We have just seen that the first term in the upper bound is $o(1)$, and we noted earlier that $\cD$ holds \whp; so it remains to show that $\pr(\bar{\cH})=o(1)$.

Let $S_0$ be a fixed set of $s$ vertices, and let $S$ now be an arbitrary such set.
Let $\mathcal{G}_S$ be the event that $S$ contains at least $\frac12 \mu=\frac12 \binom{ s }{k} p^{\binom{k}{2}}$ copies of $K_k$; and let $\cG= \land_{|S|=s} \cG_S$.  By Lemma~\ref{lem:lowerboundgeneral2} we have $\pr(\bar{\cG})=o(1)$.
Also, by the Chernoff bound~\eqref{eq:chern1},
\[ \pr(\bar{\cH}_S \mid \cG_S) = \exp(-\Omega(\mu/L))= \exp(-\Omega(s \log^2 n));  \]
and so, by a union bound,
\[ \pr \left( \lor_{|S|=s} (\bar{\cH}_S \land \cG_S) \right) \leq \binom{n}{s} 
\pr(\bar{\cH}_{S_0} \mid \cG_{S_0}) = o(1).\]
But
\[ \pr(\bar{\cH})= \pr(\lor_{|S|=s} \bar{\cH}_S) \leq \pr \left( \lor_{|S|=s} (\bar{\cH}_S \land \cG_S) \right) + \pr(\bar{\cG}).\]
and so $\pr(\bar{\cH})=o(1)$, as required.  This completes the proof of the theorem.
%
%
\end{proof}

Finally, we deal with the missing gaps when $pn = n^{3/5+o(1)}$ or $pn = n^{2/3+o(1)}$ which will finish the lower bounds in parts (d) and (f) of Theorem~\ref{thm:Gnp}.
\begin{theorem}
Suppose that $p=p(n)$ satisfies 
\begin{itemize}
\item [(a)] $n^{3/5-6/(\log n)^{1/2}} \leq pn \leq n^{3/5} (\log n)^{3/5}$, or
\item [(b)] $pn = n^{2/3+o(1)}$ and $pn\leq n^{2/3} (\log n)^{4/9}$.
\end{itemize}
Let $G=(V,E) \in \G(n,p)$. Then, \whp, 
$$
\chi_c(G) \ge \frac{n^{o(1)}}{p}.
$$
\end{theorem}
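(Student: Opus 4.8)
The plan is to handle cases (a) and (b) by adapting the proof of Theorem~\ref{thm:lower_bound_k}, taking $k=3$ in case (a) and $k=4$ in case (b). Since $pn = n^{k/(k+2)+o(1)}$ for the corresponding $k$, this $k$ is precisely the smallest one for which a typical edge lies in only $n^{o(1)}$ copies of $K_{k+1}$ (so that the edge-disjoint extension argument can work), while still being small enough that a vertex set of the size we need contains $\gg n^{o(1)}$ copies of $K_k$. Write $\theta = \theta(n) := pn\cdot n^{-k/(k+2)}$, so $\theta = n^{o(1)}$, and set $s = s(n) := C\,pn$ where $C = C(n) = n^{o(1)}$ is to be chosen large (as a maximum of a fixed power of $\log n$ divided by a fixed power of $\theta$, and an absolute constant). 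Since $C \le 1/p$ for $n$ large we have $s \le n$, and then $\chi_c(G) \ge n/\mcf(G) \ge n/s = 1/(Cp) = n^{o(1)}/p$, so it suffices to prove that \whp\ $\mcf(G) \le s$.

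Two ingredients are needed, exactly as in Theorem~\ref{thm:lower_bound_k}. First (``many cliques''): \whp\ every set $S$ of $\lceil s\rceil$ vertices contains at least $\tfrac12\binom{\lceil s\rceil}{k}p^{\binom k2} =: \tfrac12\mu$ copies of $K_k$. This is Lemma~\ref{lem:lowerboundgeneral2}, whose hypotheses hold once $C \ge 3k(\log n)^{1/(k-1)}\theta^{-(k+2)/2}$ and $\varepsilon$ is fixed small enough; note that $\mu/s = \Theta\big(C^{k-1}\theta^{(k-1)(k+2)/2}\big)$, which we will make large. Second (``few $(k+1)$-cliques per edge''): \whp\ every edge of $G$ lies in at most $R = R(n) = n^{o(1)}$ copies of $K_{k+1}$. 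For $k=3$ this is elementary: conditioned on $N(u)\cap N(v)$, the number of $K_4$'s through an edge $uv$ is $\Bin\big(\binom{|N(u)\cap N(v)|}{2},p\big)$, of mean $\sim\theta^5/2 = n^{o(1)}$, so one takes $R = \max\{2e\theta^5,\,4\log n\}$ and finishes with a Chernoff bound and a union bound over the $O(n^2)$ pairs. For $k=4$ the number of $K_5$'s through $uv$ equals the triangle count of $G[N(u)\cap N(v)]$, which is stochastically at most the triangle count of $\G(m,p)$ with $m\sim np^2$ and mean $\sim\theta^9/6 = n^{o(1)}$; this is the ``infamous upper tail'', neither bounded (so Lemma~\ref{lem:nodensegeneral} is inapplicable) nor $\gg$ polylog (so Lemma~\ref{lem:vu} does not apply directly), and it is controlled by a case analysis on $\theta$: a dense-subgraph first-moment estimate in the style of Lemma~\ref{lem:nodensegeneral} when $\theta^3$ is below a small constant, Vu's inequality (Lemma~\ref{lem:vu}) when $\theta^9 \gg (\log n)^2$, and, in the remaining window, the known sharp upper-tail bounds for subgraph counts (see, e.g., \cite{DeMarcoKahn}); in every case one obtains $R = n^{o(1)}$.

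Given these two ingredients, the rest repeats the proof of Theorem~\ref{thm:lower_bound_k} with $L=1$. Fix an $s$-set $S$ on which the ``many cliques'' property holds and condition on the edges of $G$ inside $S$; if $S$ is \MCF\ then every $k$-clique inside $S$ is complete to some vertex of $V\setminus S$, and the greedy construction, which deletes at most $kR$ cliques from the list per round (using the ``few $(k+1)$-cliques per edge'' property in its restricted form $\cF_S$), yields an edge-disjoint extension of length $t := (\mu/2)/(kR)$. By Harris's inequality (Lemma~\ref{lem.Harris}), conditional on the edges within $S$ and on the down-event $\cF_S$ in the lattice of $S$--$(V\setminus S)$ edges, every fixed such extension has probability at most $p^{kt}$, so $\pr(S\ \MCF\mid\cdots) \le \binom{\mu/2}{t}(np^k)^t$. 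Since $np^k \le n^{-\delta+o(1)}$ with $\delta = 1/5$ for $k=3$ and $\delta = 1/3$ for $k=4$, and since $C$ was chosen so large that $\mu/(kRs) \ge 3/\delta$ (hence $t \ge \tfrac{3}{2\delta}s$), and since $\log R = o(\log n)$, this bound is $o(n^{-s})$. A final union bound over the $\binom ns$ choices of $S$, together with the two \whp\ events above, gives $\mcf(G) \le s$ \whp.

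The main obstacle is the second ingredient in case (b): bounding the number of $K_5$'s through a fixed edge at the critical density $pn = n^{2/3+o(1)}$. This count has expectation $n^{o(1)}$ but is not concentrated by elementary means, and the first-moment/dense-subgraph machinery of Lemma~\ref{lem:nodensegeneral} degenerates exactly here, so one must either invoke a sharp upper-tail estimate for triangle counts or stitch together Vu's inequality and a dense-set count over complementary ranges of $\theta = n^{o(1)}$. A secondary but routine point is to verify that the amplification constant $C$ may be taken as large as required --- possibly $C = n^{o(1)}$ rather than $O(1)$, when $\theta$ sits near the lower end of its range --- without violating $s\le n$ or the hypotheses of Lemma~\ref{lem:lowerboundgeneral2}, and to reconcile the bookkeeping across the sub-cases of $\theta$.
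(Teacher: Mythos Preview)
Your overall strategy is correct and matches the paper's: take $k=3$ in (a) and $k=4$ in (b), set $s=Cpn$ with $C=n^{o(1)}$ chosen as a maximum of two terms (one to activate Lemma~\ref{lem:lowerboundgeneral2}, one to make the final union bound go through), use $L=1$, and run the edge-disjoint extension argument. Your elementary binomial bound for the $K_4$-count through an edge in case (a) is fine, and in fact tidier than what the paper does there.

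The one place you diverge significantly is the bound on the number of $K_{k+1}$'s through an edge in case~(b). You propose a three-way case split on $\theta$, invoking DeMarco--Kahn in the middle window. This can be made to work, but it is heavier than necessary, and your first sub-case (``Lemma~\ref{lem:nodensegeneral}-style when $\theta^3$ is below a small constant'') does not quite go through as written: when $\theta^3=\Theta(1)$ the density bound $n^2(np^3)^j = n^2\theta^{3j}$ does not kill the $n^2$ for bounded $j$, so you would need $j$ (and hence $R$) of order at least $\log n$ and a more careful count. Your DeMarco--Kahn fallback rescues this, but that is exactly the tool you were trying to avoid in that sub-case.

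The paper's device here is a simple monotone coupling that you missed. Since $X_{u,v}(p)$ (the number of $K_{k-1}$'s inside $N(u)\cap N(v)$) is monotone increasing in $p$, one raises $p$ to some $p'\ge p$ with $p'=n^{-2/(k+2)+o(1)}$ chosen so that $\E X_{u,v}(p')$ is just large enough (of order exceeding $(\log n)^{k-2}$) for Lemma~\ref{lem:vu} to apply directly; at $p'$ one gets $X_{u,v}(p')\le 2\,\E X_{u,v}(p')=n^{o(1)}$ with failure probability $o(n^{-2})$, and the bound transfers to $p$ via $X_{u,v}(p)\le X_{u,v}(p')$. This single step handles every $\theta=n^{o(1)}$ at once, with no case split and no appeal to sharp upper-tail estimates.

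A minor bookkeeping point: you cite Theorem~\ref{thm:lower_bound_k} as the template while actually using Lemma~\ref{lem:lowerboundgeneral2}; the paper frames the argument as an adaptation of Theorem~\ref{thm:lower_bound_k2} (with $L=1$), and indeed the two-term choice of $C$ and the Vu-based edge bound are the pieces borrowed from there.
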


\begin{proof}[Sketch of the proof]
Let us focus on part (a) first. Since the proof of this theorem is almost identical to the one of Theorem~\ref{thm:lower_bound_k2} (with $k=3$), we only mention two technical issues that are relatively easy to deal with. First, note that in order to apply Lemma~\ref{lem:lowerboundgeneral2}, we cannot keep $C$ as defined in~(\ref{eq:def_C}); this time, we set
$$
C = \max \left( \left(\frac {32 k^2 \log n}{-\log( p^k n 4ek\log^2 n)} \right)^{1/(k-1)}\!\!, \, 3k (\log n)^{1/(k-1)} (pn)^{-(k+2)/2} n^{k/2} \right).
$$
We do not necessarily have $C=O((\log n)^{1/2})$ anymore but still it is the case that $1 \le C = n^{o(1)}$. (Recall that $k=3$ in the proof of part (a).) This time there is no need to reduce the number of cliques for the union bound to work, so we may keep $L=1$, which simplifies the argument slightly. The last difference is with the application of Lemma~\ref{lem:vu}. This time the expected value of $X_{u,v} = X_{u,v}(p)$ (see~(\ref{eq:expectation_for_Vu})) is not large enough for the lemma to be directly applied. However, as standard in such situations, one can increase the probability $p$ to some value $p' \ge p$ for which the expected value of $X_{u,v}(p')$ is of order greater than $(\log n)^{k-2}$ (which equals $\log n$ in the case $k=3$ we deal with in part(a)) as required. For such a value $p'$, it follows that \whp\  for any pair $u,v$ we have $X_{u,v}(p') \le 2 \log^2 n$,
 and by standard coupling arguments the same holds for $p$. The rest of the proof is not affected. 

Exactly the same adjustments are required for part (b), this time with $k=4$. 
\end{proof}

\section{Concluding Remarks}
Let us pick up two points for further thought.
\smallskip 
\begin{itemize}
\item We investigated the clique colouring number $\chi_c(G)$ for random graphs $G \in \G(n,p)$, and in Theorem~\ref{thm:Gnp} we obtained fairly good estimates for values of $p=p(n)$, other than $p = n^{-\frac12 +o(1)}$ where $\chi_c$ drops dramatically as $p$ increases.  By parts (a) and (c), for suitable $\eps(n)=o(1)$ (going to 0 slowly), if $p = n^{-\frac12 - \eps}$ then $\chi_c(G) = n^{\frac12 - o(1)}$ \whp; whereas if $p = n^{-\frac12 + \eps}$ then $\chi_c(G) = n^{\frac14 + o(1)}$ \whp. 
 For intermediate values of $p$, all we say in part (b) is that $\chi_c(G)$ lies \whp between the values $n^{\frac14 - o(1)}$ and $n^{\frac12 + o(1)}$.  It would be interesting to learn more about this jump. 
\item  A second natural question for random graphs $G \in \G(n,p)$ concerns the dense case, when $p$ is a constant with $0<p<1$.  We have seen that $\chi_c(G) $ is $O(\log n)$ \whp\ but what about a lower bound?
\end{itemize}

\end{document}